\numberwithin{equation}{section}
\newtheorem{theorem}{Теорема}[section]
\newtheorem{lemma}[theorem]{Лемма}
\newtheorem{remark}[theorem]{Замечание}
\providecommand{\abs}[1]{\lvert #1\rvert}
\newcommand{\nc}{\newcommand}
\nc{\vb}{\mathbf{v}}
\nc{\bx}{\mathbf{x}}
\nc{\by}{\mathbf{y}}
\nc{\bz}{\mathbf{z}}
\nc{\bu}{\mathbf{u}}
\nc{\ba}{\mathbf{a}}
\nc{\bs}{\mathbf{s}}
\nc{\bq}{\mathbf{q}}
\nc{\bd}{\mathbf{d}}
\nc{\bb}{\mathbf{b}}
\nc{\bfr}{\mathbf{r}}
\nc{\bA}{\mathbf{A}}
\nc{\R}{\mathbb R}
\nc{\N}{\mathbb N}
\nc{\bbC}{\mathbb C}
\nc{\bbD}{\mathbb D}
\nc{\F}{\mathbf F}
\nc{\bbS}{\mathbb S}
\nc{\B}{\cal B}
\nc{\br}{\bigr}
\nc{\bl}{\bigl}
\nc{\Bl}{\Bigl}
\nc{\Br}{\Bigr}
\nc{\ind}{\mathbf{1}}
\title{
\begin{flushright}
\mbox{\small{Посвящается С.В. и Е.М.}}
\end{flushright}
Геометрия больших очередей}
\author{А.А. Пухальский}
\begin{document}
\maketitle
\sloppy
\begin{abstract}
Предлагается подход к 
 нахождению  наиболее вероятных траекторий,
ведущих к образованию длинных очередей в 
эргодической сети Джексона, основанный на
решении уравнений Гамильтона. Так как соответствующий гамильтониан является
разрывным и кусочно липшицевым,
возникает  необходимость использовать аппарат негладкого анализа.
Обращение уравнений Гамильтона во времени приводит к уравнениям
жидкостной динамики двойственной сети. Соответственно,
 оптимальные траектории есть
обращённые во времени
 жидкостные траектории двойственной сети.  Эти траектории с необходимостью 
проходят через области,
удовлетворяющие некоторому условию ''существенности''. 
Результаты проиллюстрированы на примере сети Джексона из
двух узлов. Установлены также свойства субстохастических матриц,
которые могут представлять самостоятельный интерес.
\end{abstract}
\section{Оптимальные траектории
  в сети Джексона и 
вариационные задачи с нефиксированным временем}
\label{sec:introduction}
Принцип больших уклонений позволяет находить грубые асимптотики
вероятностей достижения случайным процессом нетипичных значений и
наиболее вероятные сценарии осуществления  редких событий.  
Для этого необходимо решить вариационную задачу минимизации функции
уклонений (называемую также ''функционалом действия'') по траекториям,
которые реализуют такие   события.
 Как правило, ищется решение уравнения Эйлера--Лагранжа, см., напр.,
 Вентцель, Фрейдлин \cite{wf-r}, Shwartz, Weiss
\cite{SchWei95}. 
 В данной работе на примере  сети Джексона изучаются
 возможности подхода,
 состоящего в решении уравнений Гамильтона.

Рассматривается (экспоненциальная) сеть Джексона из  $K$ узлов с матрицей
маршрутизации
 $P=(p_{kl})$ спектрального радиуса меньше единицы, с интенсивностями входящих
 потоков   $\lambda_k>0$ и интенсивностями обслуживания в узлах
   $\mu_k>0$\,,  см., напр., Клейнрок \cite{Kle79-r}.
Обозначим через $Q_k(t)$ длину очереди в узле $k$ в
момент $t$ и пусть $Q(t)=(Q_1(t),\ldots,Q_K(t))^T$\,,
где $^T$ -- оператор транспонирования. 
Для $J\subset\{1,2,\ldots,K\}$ обозначим через $F_J$ множество
векторов $x=(x_1,\ldots,x_K)^T$ таких, что $x_i=0$ при $i\in J$ и $x_i>0$
при $i\notin J$\,. Как обычно принято, 
$J^c=\{1,2,\ldots,K\}\setminus J$\,.
Обозначим также
 $  \pi(u)=u\ln
 u-u+1$\,,  если $u>0\,$, и положим $\pi(0)=1$\,.
  Пусть  $\ind_\Gamma$ представляет собой
 индикаторную функцию множества $\Gamma$\,, которая равна  $1$ на
 $\Gamma$ и равна $0$ вне $\Gamma$.
Обозначим через   $\bbS^{K\times K}_+$ множество субстохастических (по строкам)
 матриц размера $K\times K$, через $I$ -- единичную матрицу размера
 $K\times K$\,, а через $\mathbb D(\R_+,
\R^K_+)$ -- пространство Скорохода непрерывных справа и имеющих пределы
слева функций, заданных на $\R_+$ и принимающих значения в $\R_+^K$\,,
которое снабжено метрикой, превращающей его в полное метрическое
сепарабельное пространство, см., Жакод, Ширяев \cite{jacshir-r},
Ethier, Kurtz \cite{EthKur86}.
Следующий  результат получен
 в Puhalskii \cite{Puh07}. (Предполагается, что произведение функции
 $\pi$ с числом 0 равно нулю даже если аргумент функции $\pi$
равен бесконечности и что $0/0=0$\,.)
\begin{theorem}
  \label{co:jackson}
Пусть  $Q(0)=0\in\R_+^K$\,.
Тогда, при  $n\to\infty$, процессы
 $(Q(nt)/n,\,t\in\R_+)$ удовлетворяют принципу больших уклонений 
в $\bbD(\R_+,\R^K_+)$ с
 фyнкцией уклонений $\mathbf{I}(q)$, 
которая для абсолютно непрерывных функций  $q=(q(t),\,t\in\R_+)$\,,
таких что
 $q(0)=0\in\R_+^K$\,, имеет вид
\begin{equation*}
  \mathbf{I}(q)=\int_0^\infty
L(q(t),\dot q(t))\,dt,
\end{equation*}
где
\begin{align}
  L(x,y)&=\sum_{J\subset\{1,2,\ldots,K\}}
\ind_{F_J}(x)L_J(y)\,,\notag\\
 \label{eq:35a}  L_J(y)&
=\inf_{\substack{(a,d,\varrho)\in\R^K_{+}
\times\R_+^K\times\mathbb{S}_+^{K\times K}
:\\y=a+(\varrho^T-I)d}}\psi_J(a,d,\varrho)
\intertext{и}
   \psi_J(a,d,\varrho)&=\sum_{k=1}^K
\pi\bl(\frac{a_k}{\lambda_k}\br)\,\lambda_k
+\sum_{k\in J^c}
\pi\bl(\frac{d_k}{\mu_k}\br)\,\mu_k
+\sum_{k\in J} \pi\bl(\frac{d_k}{\mu_k}\br)\,
\ind_{(\mu_k,\infty)}(d_k)\mu_k+\notag\\ \label{eq:35b}
&+\sum_{k=1}^K d_k \biggl[\sum_{l=1}^K \pi\Bl(\frac{\varrho_{kl}}{p_{kl}}\Br)p_{kl}+
\pi\Bl( \frac{1-\sum_{l=1}^K
  \varrho_{kl}}{1-\sum_{l=1}^K p_{kl} }\Br)\bl(1-\sum_{l=1}^K
p_{kl}\br)\biggr].
\end{align}
Если функция  $q$ не является абсолютно непрерывной или 
если $q(0)\not=0$, то
$\mathbf{I}(q)=\infty$. 
\end{theorem}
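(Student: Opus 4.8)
The plan is to view $(Q(n\cdot)/n)$ as a rescaled jump Markov process and to derive the large deviation principle by the idempotent-probability (maxingale) method of \cite{Puh07}, treating the interior by Wentzell--Mogulskii-type arguments and isolating the boundary faces $F_J$ as the real difficulty. I would begin with exponential tightness of $(Q(n\cdot)/n)_n$ in $\bbD(\R_+,\R_+^K)$. All jumps come from independent Poisson streams of rates at most $\lambda_k$, $\mu_k p_{kl}$ and $\mu_k(1-\sum_l p_{kl})$, the service streams being switched on only while the corresponding server is busy; hence the number of transitions on any interval is dominated by a Poisson variable and the usual exponential modulus-of-continuity estimate gives exponential tightness. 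This reduces the theorem to local bounds and ensures that the candidate functional $\mathbf I$ has compact sublevel sets.

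Next I would identify the local rate function through the Hamiltonian. With $J(x)=\{k:x_k=0\}$, the generator produces the jumps $+e_k$ at rate $\lambda_k$, $e_l-e_k$ at rate $\mu_k p_{kl}$ and $-e_k$ at rate $\mu_k(1-\sum_l p_{kl})$, the latter two active only when $x_k>0$. On a face $F_J$ the naive Hamiltonian is
\[
H_J(\theta)=\sum_k\lambda_k\bigl(e^{\theta_k}-1\bigr)+\sum_{k\in J^c}\mu_k\Bigl(\sum_l p_{kl}e^{\theta_l-\theta_k}+\bigl(1-\sum_l p_{kl}\bigr)e^{-\theta_k}-1\Bigr).
\]
By the inf-convolution rule for Legendre transforms of sums of exponentials, $\sup_\theta(\langle\theta,y\rangle-H_\emptyset(\theta))$ equals the infimum in \eqref{eq:35a}--\eqref{eq:35b} for $J=\emptyset$. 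Parametrising the flow of the jump $e_l-e_k$ as $d_k\varrho_{kl}$ and of $-e_k$ as $d_k(1-\sum_l\varrho_{kl})$ turns the constraint $\sum_j r_j v_j=y$ into the flow balance $y=a+(\varrho^T-I)d$, and the per-node ``chain rule'' for relative entropy reassembles the individual Poisson costs $\mu_k p_{kl}\pi(d_k\varrho_{kl}/(\mu_k p_{kl}))$ and $\mu_k(1-\sum_l p_{kl})\pi(\cdots)$ into the service term $\mu_k\pi(d_k/\mu_k)$ plus $d_k$ times the routing bracket of \eqref{eq:35b}. I would carry out this routine computation, using $\pi(0)=1$ and $0/0=0$ to cover forbidden transitions $p_{kl}=0$.

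The main obstacle is the boundary. For $J\neq\emptyset$ the face Lagrangian $L_J$ is not the Legendre dual of $H_J$: the term $\pi(d_k/\mu_k)\ind_{(\mu_k,\infty)}(d_k)\mu_k$ says that an empty queue $k\in J$ can sustain a throughput $d_k\le\mu_k$ at zero cost --- customers passing through are served at the nominal capacity of an idling server --- while only $d_k>\mu_k$ is penalised. The correct effective Hamiltonian therefore reinstates, but \emph{caps}, the service of an empty queue: in the departure direction it should coincide with $\mu_k(e^{-\theta_k}-1)$ for $\theta_k\le 0$ and vanish for $\theta_k\ge 0$, whose Legendre dual is exactly the truncated cost above. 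I would justify this by a boundary-layer analysis of the fast excursions of an empty queue --- equivalently, by computing the local idempotent distribution on $F_J$ from the maxingale problem, where the empty-coordinate service appears as a boundary limit rather than a switched-off term --- and then check lower semicontinuity of $L$ and attainment of the infimum defining $L_J$.

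Finally I would prove the two bounds and glue. For the upper bound I would use the exponential supermartingales $\exp\bigl(n\langle\theta,Q(n\cdot)/n\rangle-n\int_0^\cdot H(Q(ns)/n,\theta)\,ds\bigr)$ with piecewise-constant $\theta$, combine the exponential Chebyshev inequality with exponential tightness, and pass to the action via convex duality over test functions; non-absolutely-continuous paths and those with $q(0)\ne0$ then carry infinite cost. For the lower bound, given an absolutely continuous $q$ with $\mathbf I(q)<\infty$, I would choose measurable minimisers $(a(t),d(t),\varrho(t))$ of $\psi_{J(q(t))}$, build a time-inhomogeneous network with these perturbed rates whose fluid limit is $q$, and control the maxingale change of measure so that its exponential rate matches $\int_0^\infty L(q,\dot q)\,dt$; the empty-queue excursions are reproduced at zero cost precisely when $d_k\le\mu_k$. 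Since there are finitely many faces, gluing the local rate functions together with exponential tightness upgrades the local statement to the full LDP in $\bbD(\R_+,\R_+^K)$. I expect the boundary --- both the derivation of $L_J$ and the construction of near-optimal boundary trajectories --- to be the crux, the interior being essentially classical.
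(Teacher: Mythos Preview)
The paper does not prove Theorem~\ref{co:jackson}; the sentence immediately before the statement attributes it to Puhalskii~\cite{Puh07}, and no argument is given here beyond the appendix computation that rewrites $L_J$ in dual form~\eqref{eq:35}. There is thus no in-paper proof to compare your sketch against. Your outline --- exponential tightness from Poisson domination, identification of the local Lagrangian by Legendre duality with the jump Hamiltonian, upper bound via exponential supermartingales, lower bound via a tilted network whose fluid limit is the target path --- is the architecture of~\cite{Puh07}, and as a plan it is sound.

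One concrete inaccuracy is worth correcting, since you single out the boundary as the crux. The effective Hamiltonian on a face $F_J$ is \emph{not} obtained by capping the one-dimensional departure term $\mu_k(e^{-\theta_k}-1)$ at zero. The total service contribution of node $k$ in the interior Hamiltonian is $\mu_k h_k(\theta)$ with
\[
h_k(\theta)=e^{-\theta_k}\Bigl(\sum_{l=1}^K(e^{\theta_l}-1)p_{kl}+1\Bigr)-1,
\]
and the correct face Hamiltonian is
\[
H_J(\theta)=\sum_{k=1}^K(e^{\theta_k}-1)\lambda_k+\sum_{k\in J^c}\mu_k h_k(\theta)+\sum_{k\in J}\mu_k\, h_k(\theta)^+
\]
(this is \eqref{eq:69}; the paper derives the dual identity $L_J(y)=\sup_\theta(\theta\cdot y-H_J(\theta))$ in the appendix directly from \eqref{eq:35a}--\eqref{eq:35b}). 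The positive part is taken of the \emph{whole} $h_k$, which couples all coordinates of $\theta$ through the routing matrix $P$, not of the pure departure exponent $e^{-\theta_k}-1$. Your one-dimensional heuristic gives the right truncated cost $\mu_k\pi(d_k/\mu_k)\ind_{(\mu_k,\infty)}(d_k)$ only in the absence of routing; in the Jackson network the ``free throughput'' available to an empty node depends on the incoming flow, and the Legendre computation will not reproduce $L_J$ unless the routing is kept inside the cap. Apart from this, your proposal matches the source the paper cites.
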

 В развёрнутом виде утверждение теоремы означает, что 
множества $\{q\in \bbD(\R_+,\R^K):\,\mathbf I(q)\le u\}$
являются компактами при всех $u\ge 0$ и выполняются неравенства 
\begin{equation*}
  \liminf_{n\to\infty}\frac{1}{n}\,\ln\mathbf P(q\in G)\ge -\inf_{q\in
    G}\mathbf I(q)
\end{equation*}
при условии, что $G$-- открытое подмножество $\bbD(\R_+,\R_+^K)$\,, и
\begin{equation*}
  \limsup_{n\to\infty}\frac{1}{n}\,\ln\mathbf P(q\in F)\le -\inf_{q\in
    F}\mathbf I(q)
\end{equation*}
при условии, что $F$-- замкнутое подмножество $\bbD(\R_+,\R^K_+)$\,.
Заметим, что формулируемый в теореме 1.1 принцип больших уклонений
установлен также  в
Atar, Dupuis\cite{MR1719274}, 
Ignatiouk--Robert \cite{Ign00}, где
функция $L_J$ даётся в другом виде.

Из принципа больших уклонений следует, что, в широких предположениях,
 логарифм вероятности попадания
процесса $Q(nt)/n$ в заданное множество $S$ на больших интервалах времени
 близок к $-\inf_{q\in S'}\mathbf I(q)$\,, где
$S'=\{q:\,q(0)=0\,, q(T)\in S \text{ для некоторого }T>0\}$\,. Если
последний инфимум достигается  единственной функцией $ q $\,, то
попадание $Q(nt)/n$
 во множество $S$ происходит с вероятностью близкой к единице
 движением в малой окрестности этой функции.
В данной статье основное внимание  уделяется вопросу нахождения для
эргодической сети Джексона
 такой функции $q$, начинающейся в начале координат и 
достигающей фиксированной точки в некоторый момент $T$\,, для которой 
 $\int_0^TL(q(t),\dot q(t))\,dt$ минимален. Показано, что обращение во
 времени уравнений Гамильтона для оптимальных траекторий
 приводит к уравнениям жидкостной динамики двойственной сети.
Это наблюдение позволяет подтвердить для
эргодической сети Джексона  справедливость
''фольклорной теоремы'', утверждающей, что
 наиболее вероятная
 траектория, обеспечивающая ''большое уклонение''
 эргодического марковского
 процесса, является обращённой во времени 
 траекторией жидкостного предела двойственного процесса, см.,
 напр., Collingwood
\cite{Col15}.
(В статфизической литературе соответствующее свойство известно как
обобщённый принцип Онзагера--Махлупа: траектория флуктуации есть
обращённая во времени траектория релаксации двойственной динамики, см.,
напр., Bouchet, Laurie, Zaboronsky \cite{BouLauZab14}.)
   Показано, что найденная оптимальная
траектория представляет собой предел (условного) 
закона больших чисел при условии, что
длины очередей в сети достигают аномально больших значений.
Найдено необходимое и достаточное условие, при выполнении которого
данная грань ортанта $\R_+^K$ может содержать часть
 оптимальной траектории, а, следовательно, -- жидкостной
 траектории. В заключение, на примере
  эргодической сети Джексона из двух узлов рассматриваются
  геометрические аспекты построения оптимальных траекторий.
Кроме того, в статье установлены свойства субстохастических матриц, которые
могут представлять самостоятельный интерес.

Так как функции $L_J(y)$ являются выпуклыми по лемме 4.2 в Puhalskii \cite{Puh07}, то, ввиду неравенства
Йенсена, оптимальное движение в области с постоянной динамикой, т.е.,
в грани $F_J$\,, происходит по прямой.
Заметим также, что если $J'\subset J''$\,,  то $\psi_{J'}(a,d,\varrho)\ge
\psi_{J''}(a,d,\varrho)$\,и, как следствие, 
$L_{J'}(y)\ge
L_{J''}(y)$\,. Поэтому, если начальная и конечная точки 
участка траектории
принадлежат $F_J$\,, то движение по прямой ''выгоднее'', чем движение
через смежные области
$F_{J'}$\,, где $J'\subset J$\,. Будет показано, что, более того,
 оптимальные
траектории  состоят из конечного числа
 отрезков, которые принадлежат 
 граням $F_J$ с убывающими по включению множествами $J$\,,
если не считать конечную точку, для которой свойство убывания $J$ может не
выполняться.  В
 частности, каждая из граней $F_J$ содержит не более одного отрезка.

Для нахождения оптимальных траекторий в гранях $F_J$ будет
использоваться
  следующий общий результат.
Для функции $f(y)$\,,
удовлетворяющей условию Липшица локально, будем обозначать через $\partial f(y)$
обобщённый градиент $f$ в точке $y$\,, см., Кларк \cite{Cla83-r}. Если
$f$ выпукла, что верно для приложений в данной статье,
 то обобщённый градиент совпадает с субдифференциалом, см., напр.,
 Рокафеллар \cite{Rock-r}. Определение 
 конуса нормального к непустому замкнутому множеству в точке этого множества
как сопряжённого к касательному конусу в этой точке можно найти на с.19 в Кларк
\cite{Cla83-r}. Для случая выпуклого множества определение сводится к
определению на с.32
в Рокафеллар \cite{Rock-r} как совокупности векторов нормальных к
множеству в рассматриваемой точке, где вектор $z$
называется нормальным к выпуклому множеству $S$ в точке $x\in S$\,,
если $z\cdot (y-x)\le0$ для всех $y\in S$\,. (Здесь и ниже
$\cdot$ используется для обозначения скалярного произведения.
Кроме того, сокращение ''п.в.'' означает ''почти всюду по мере Лебега''.)
\begin{lemma}
  \label{le:lagrange}
Пусть $L(x,y)$ --  функция
из $\R^k\times \R^k$ в $\R$\,, удовлетворяющая условию
Липшица по $(x,y)$ локально и выпуклая по $y$\,.
Предположим, что  функция
\begin{equation}
  \label{eq:31}
    H(x,p)=\sup_{y\in\R^k}(p\cdot y-L(x,y))
\end{equation}
удовлетворяет  условию Липшица локально (по $(x,p)$)\,.
Пусть $g(x)$ -- полунепрерывная сверху  функция,
удовлетворяющая условию Липшица локально. 
Пусть $S$ -- выпуклое замкнутое подмножество
$\R^k$ и $x_0\in\R^k$\,.
Если минимум 
$\int_0^TL(x(t),\dot x(t))\,dt$ по $T\ge0$ и по абсолютно непрерывным
 функциям $x(t)$ таким, что $x(0)=x_0$\,, $x(T)\in S$ и
$g(x(t))\le0$ для всех $t\le T$\,,   достигается для $T= T^\ast $
и  $x(t)=x^\ast(t)$\,, 
то  существуют  мера $\mu$ на $[0, T^\ast ]$\,, $\mu$--измеримая
 функция
$\gamma(t)$  
и   абсолютно непрерывная функция $p(t)$\,, обе принимающие значения в
$\R^k$\,, такие
что имеют место следующие свойства:
\begin{enumerate}
\item $\gamma(t)\in \partial^>g(x^\ast(t))$ для $\mu$--почти всех
  $t\in[0, T^\ast ]$ и носитель меры $\mu$ содержится в множестве
  $\{t\in[0, T^\ast ]:\,\partial^>g( x^\ast(t))\not=\emptyset\}$\,,
где $\partial^>g(x)$ представляет собой выпуклую оболочку пределов
последовательностей $\gamma_i$\,, удовлетворяющих условиям 
$\gamma_i\in \partial g(x_i)$\,, $x_i\to x$\,, $g(x_i)>0$\,,
  \item \begin{equation*}
\left[  \begin{array}[c]{c}
    -\dot p(t)\\\dot  x^\ast(t)
  \end{array}\right]\in \partial H\bl(x^\ast(t),p(t)+\int_0^t
\gamma(s)\, d\mu(s)\br)
\quad \text{п.в. на } [0,T^\ast],
\end{equation*}
\item 
  \begin{equation*}
    H(x^\ast(t),p(t)+\int_0^t\gamma(s)\, d\mu(s))=0\quad\text{  на } [0,T^\ast]\,,
  \end{equation*}
\item
  \begin{equation*}
    p( T^\ast )+\int_0^{ T^\ast } \gamma(s)\,d\mu(s)\in -N_S(x^\ast( T^\ast ))\,,
  \end{equation*}
где $N_S(x^\ast( T^\ast ))$ -- нормальный конус к $S$ в точке $x^\ast( T^\ast )$\,. \end{enumerate}
\end{lemma}
Доказательство леммы отнесено в приложение.
Её применение облегчается тем, что, в отличие от лагранжиана $L_J$\,,
соответствующий гамильтониан 
$H_J(\theta)=\sup_{y\in\R^K}(\theta\cdot y-L_J(y))$\,, где $\theta\in\R^K$\,, имеет сравнительно
простой вид.
Пусть для $\theta=(\theta_1,\ldots,\theta_K)^T$ и $k=1,2,\ldots,K$
\begin{equation}
  \label{eq:27}
  h_k(\theta)=e^{-\theta_k}\bl(\sum_{l=1}^K(e^{\theta_l}-1)p_{kl}+1\br)-1\,.
\end{equation}
Выкладки, приводимые в приложении, показывают, что
\begin{equation}
  \label{eq:35}
  L_J(y)=
\sup_{\theta\in\R^K}
\Bl(\sum_{k=1}^K
\theta_k\,y_k
-\sum_{k=1}^K(e^{\theta_k}-1)\lambda_k
-\sum_{k\in J}h_k(\theta)^+\mu_k-\sum_{k\in J^c}h_k(\theta)\mu_k\,
\Br)\,,
\end{equation}
где используется обозначение $u^+=\max(u,0)$\,.
Отсюда, в частности, вытекает, что функция $L_J$ выпукла и
полунепрерывна снизу. Кроме того,
\begin{equation}
    H_J(\theta)=
  \label{eq:69}
    \sum_{k=1}^K(e^{\theta_k}-1)\lambda_k+\sum_{k\in J}h_k(\theta)^+\mu_k+\sum_{k\in J^c}h_k(\theta)\mu_k\,
\,.
\end{equation}
Обозначим также
\begin{equation}
  \label{eq:28}
   H_0
   (\theta)=H_\emptyset(\theta)=\sum_{k=1}^K(e^{\theta_k}-1)\lambda_k
+\sum_{k=1}^Kh_k(\theta)\mu_k\,.
\end{equation}
Следующее утверждение 
вытекает из леммы  \ref{le:lagrange} с
$L(x,y)=L_J(y)$ и с $g(x)=\sum_{i\in J} x_i^2$\,,
так что мера $\mu$ равна нулю.
\begin{lemma}
  \label{le:urnie}
Oптимальное движение в грани $F_J$ происходит с постоянным 
импульсом $\theta$ таким, что $H_J(\theta)=0$\,, а
уравнение для оптимальных траекторий имеет вид:
\begin{equation}
  \label{eq:24}
  \dot q(t)\in\partial H_J(\theta) \text{ п.в.}
\end{equation}

\end{lemma}
Эта лемма оставляет открытым вопрос о существовании оптимальных
траекторий. К сожалению, в отличии от случая фиксированного интервала
времени в рассматриваемой
нами постановке существование оптимальной траектории не следует из
 компактности множеств $\{q:\,\mathbf I(q)\le
u\}$\,, где $u\ge0$\,.  Ниже будет предполагаться, по большей части,
 что оптимальная траектория
существует и будут исследоваться её свойства.
 Существование и единственность будут доказаны  
 с помощью {\em ad hoc} рассуждений.
\section{Свойства гамильтониана}
\label{sec:-hamilt}

В этом разделе найдены кандидаты для оптимальных импульсов
 $\theta$ при движении в
гранях $F_J$\,. Введём предварительно некоторые обозначения.
Для квадратной матрицы $B$
 через $B(i|j)$ обозначается матрица,
получающаяся из $B$ выбрасыванием $i$--й строки и $j$--го столбца.
Аналогично, обозначим через 
$b_{\cdot
      l}$ $l$--й столбец матрицы $B$ без $l$--го элемента, а через
    $b_{l\cdot}$ -- $l$--ую строку без $l$--го элемента.

Пусть вектор
$\theta^{(m)}=(\theta^{(m)}_1,\ldots,\theta^{(m)}_K)^T\not=0$\,, где 
$m\in\{1,2,\ldots,K\}$\,, удовлетворяет
уравнениям $h_k(\theta^{(m)})=0$ при $k\not=m$ и $H_0 (\theta^{(m)})=0$\,.
Ввиду \eqref{eq:27}, отношения $a_{ml}=(e^{\theta^{(m)}_l}-1)/(e^{\theta^{(m)}_m}-1)$
при $m\not=l$ удовлетворяют системе уравнений 
\begin{equation}
  \label{eq:3}
a_{mk}-  \sum_{l\not=m}a_{ml}p_{kl}=p_{km}\,, k\not=m\,.
\end{equation}
(Заметим, что
если $\theta^{(m)}_m=0$\,, то $\theta^{(m)}_k=0$ для всех $k$ в силу
\eqref{eq:27}.)
Уравнения \eqref{eq:3} имеют единственное решение
\begin{equation}
  \label{eq:8}
a_{m\cdot}^T=\bl((I-P)(m|m)\bl)^{-1}p_{\cdot m}\,.
\end{equation}
Кроме того, $a_{mm}=1$\,.
Числа $a_{ml}$ определяются этими условиями единственным образом и
являются неотрицательными. 
 Подстановка в условие $H_0
(\theta^{(m)})=0$\,, см. \eqref{eq:28},
показывает, что 
\begin{equation}
  \label{eq:4}
  e^{\theta^{(m)}_m}=\dfrac{1-\displaystyle\sum_{l=1}^Ka_{ml}p_{ml}}{\displaystyle\sum_{l=1}^Ka_{ml}\lambda_l}\,\mu_m\,.
\end{equation}
Обозначим
$\theta^\ast =(\theta^{(1)}_1,\theta^{(2)}_2,\ldots,\theta^{(K)}_K)^T$
и
\begin{equation}
  \label{eq:25}
  \nu=(I-P^T)^{-1}\lambda\,,
\end{equation}
где $\lambda=(\lambda_1,\ldots,\lambda_K)^T$\,.
В стационарном режиме  $\nu=(\nu_1,\ldots,\nu_K)^T$ -- это вектор 
средних потоков через узлы сети. 
Положим $\rho_m=\nu_m/\mu_m$\,, где $m=1,2,\ldots,K$\,.
Следующая теорема показывает, в частности, что точка пересечения 
гиперплоскостей 
$\{\theta:\,\theta_m=\theta_m^{(m)}\}$ лежит на поверхности $H_0(\theta)=0$\,.
\begin{theorem}
\label{the:H} Имеют место соотношения  $H_0 (\theta^\ast )=0$ и 
$e^{-\theta^{(m)}_m}=\rho_m$ для $m=1,2,\ldots,K$\,.  
\end{theorem}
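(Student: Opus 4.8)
The plan is to prove the two assertions in the order opposite to their listing: first the coordinatewise identity $e^{-\theta^{(m)}_m}=\rho_m$, and then $H_0(\theta^\ast)=0$, which will follow by direct substitution. Fix $m$ and set $b=(a_{m1},\dots,a_{mK})^T$ with $a_{mm}=1$, the $a_{ml}$ being the quantities in \eqref{eq:8}. The first thing I would record is that the linear system \eqref{eq:3} is nothing but the statement that $b$ is $P$-harmonic away from node $m$: substituting $a_{mm}=1$ into \eqref{eq:3} turns it into $a_{mk}=\sum_{l=1}^Kp_{kl}a_{ml}=(Pb)_k$ for every $k\neq m$, while $b_m=1$. (Probabilistically, $a_{mk}$ is the chance that the routing chain started at $k$ reaches $m$ before leaving the network, but only the two linear relations $b_m=1$ and $(Pb)_k=b_k$, $k\neq m$, will be used.)

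The crux of the argument is the identity
\begin{equation*}
\sum_{l=1}^K a_{ml}\lambda_l=\nu_m\Bl(1-\sum_{l=1}^K a_{ml}p_{ml}\Br),
\end{equation*}
linking the hitting-type vector $b$ to the throughput $\nu_m$. I would prove it using the traffic equations \eqref{eq:25} in the form $\lambda=(I-P^T)\nu$: then
\begin{equation*}
\sum_{l=1}^K a_{ml}\lambda_l=\sum_{l=1}^K b_l\nu_l-\sum_{l=1}^K b_l(P^T\nu)_l=\sum_{l=1}^K b_l\nu_l-\sum_{j=1}^K\nu_j(Pb)_j,
\end{equation*}
and since $(Pb)_j=b_j$ for all $j\neq m$, every term with $j\neq m$ cancels, leaving $\nu_m\bl(b_m-(Pb)_m\br)=\nu_m\bl(1-\sum_{l}a_{ml}p_{ml}\br)$. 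Combining this with the already derived expression \eqref{eq:4} for $e^{\theta^{(m)}_m}$ gives at once
\begin{equation*}
e^{-\theta^{(m)}_m}=\frac{\sum_{l=1}^K a_{ml}\lambda_l}{\mu_m\bl(1-\sum_{l=1}^K a_{ml}p_{ml}\br)}=\frac{\nu_m}{\mu_m}=\rho_m.
\end{equation*}

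For the equality $H_0(\theta^\ast)=0$ it then suffices to substitute these values. By the definition of $\theta^\ast$ we have $\theta^\ast_k=\theta^{(k)}_k$, hence $e^{\theta^\ast_k}=\mu_k/\nu_k$ and $\mu_k e^{-\theta^\ast_k}=\nu_k$. Plugging into \eqref{eq:27}--\eqref{eq:28} and using $\sum_{k}\nu_kp_{kl}=(P^T\nu)_l=\nu_l-\lambda_l$ from \eqref{eq:25}, a short computation gives
\begin{align*}
H_0(\theta^\ast)&=\sum_{k=1}^K(e^{\theta^\ast_k}-1)\lambda_k+\sum_{l=1}^K(e^{\theta^\ast_l}-1)(\nu_l-\lambda_l)+\sum_{k=1}^K\nu_k-\sum_{k=1}^K\mu_k\\
&=\sum_{l=1}^K(e^{\theta^\ast_l}-1)\nu_l+\sum_{k=1}^K\nu_k-\sum_{k=1}^K\mu_k=\sum_{l=1}^K(\mu_l-\nu_l)+\sum_{k=1}^K\nu_k-\sum_{k=1}^K\mu_k=0.
\end{align*}
The main obstacle is the middle paragraph: recognizing that \eqref{eq:3} encodes $P$-harmonicity of $b$ off $m$ and that, once paired with the traffic equations \eqref{eq:25}, it yields the throughput identity above. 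Everything after that---both the conclusion $e^{-\theta^{(m)}_m}=\rho_m$ and the telescoping cancellation giving $H_0(\theta^\ast)=0$---is purely mechanical.
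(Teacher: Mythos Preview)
Your argument is correct, and it is considerably shorter than the paper's. The paper derives $e^{-\theta^{(m)}_m}=\rho_m$ by first proving a pair of algebraic identities \eqref{eq:7}--\eqref{eq:7a} via three determinant/adjugate lemmas (Lemmas~\ref{le:adj}--\ref{le:tozd}), then using them to show that the matrix $C=(c_{mk})$ with $c_{mk}=a_{mk}/(1-\sum_l a_{ml}p_{ml})$ equals $(I-P^T)^{-1}$; the formula $e^{\theta^{(m)}_m}=\mu_m/\nu_m$ then drops out of \eqref{eq:4}. You bypass all of this by reading \eqref{eq:3} as $P$-harmonicity of $b=(a_{m1},\dots,a_{mK})^T$ away from $m$ and pairing it directly with the traffic equation $\lambda=(I-P^T)\nu$, which collapses $\sum_l a_{ml}\lambda_l$ to $\nu_m(1-\sum_l a_{ml}p_{ml})$ in one line. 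The final check $H_0(\theta^\ast)=0$ is essentially the same computation in both proofs.

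The trade-off is that the paper's longer route produces the extra structure $C=(I-P^T)^{-1}$ and $a_{mk}=c_{mk}/c_{mm}$ (equation \eqref{eq:37}), which it reuses later---for instance, the identity $\sum_l a_{ml}p_{ml}=1-1/c_{mm}$ appears in the analysis of the optimal trajectory along an axis, and Lemma~\ref{le:sush_os'} is stated in terms of the $c_{ml}$. Your proof gives Theorem~\ref{the:H} cleanly but does not establish those auxiliary formulas; if you need them downstream you would have to derive them separately (which, to be fair, is not hard once one knows $C=(I-P^T)^{-1}$ is the target).
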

Предпошлем доказательству ряд лемм. Будем предполагать, что
$B=(b_{ij})$ --  $K\times K$--матрицa  с ненулевым определителем и 
ненулевыми главными минорами.
Ниже $\text{adj}$ используется для обозначения присоединенной матрицы, 
$\text{det}$ -- для обозначения определителя,
а через
$M_{ij}(l|l)$ обозначается минор $(i,j)$--го элемента матрицы $B(l|l)$\,.
Обозначим для $l\not=m$
\begin{equation}
  \label{eq:10}
  f_{ml}=
  \begin{cases}
    e_m\,,&\text{ если }m<l\,,\\
e_{m-1}\,,&\text{ если }m>l\,,
  \end{cases}
\end{equation}
где 
$e_i$ представляет собой $i$--ый вектор стандартного базиса в $\R^{K-1}$\,.
\begin{lemma}
  \label{le:adj}
Если $l\not=m$\,, то
\begin{equation*}
  f_{ml}^T\,\text{adj}(B(l|l))b_{\cdot l}=(-1)^{m+l+1}\,\text{det}\,(B(l|m))\,.
\end{equation*}
\end{lemma}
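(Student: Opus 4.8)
The plan is to read the left--hand side through the Cramer--rule interpretation of the adjugate. Recall that for any $(K-1)\times(K-1)$ matrix $A$ and any vector $v$, the $j$-th coordinate of $\text{adj}(A)\,v$ equals the determinant of the matrix obtained from $A$ by overwriting its $j$-th column with $v$; this is immediate from expanding that determinant along the replaced column and noting that the relevant cofactors do not involve the $j$-th column. Here $A=B(l|l)$ has its rows and columns indexed by $\{1,\dots,K\}\setminus\{l\}$, and $b_{\cdot l}$ must be read as the reduced $l$-th column $(b_{il})_{i\ne l}$, since the entry $b_{ll}$ lies in the deleted row and dimensional consistency with the $(K-1)\times(K-1)$ matrix $\text{adj}(B(l|l))$ forces this. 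The vector $f_{ml}$ is precisely the standard basis vector selecting the coordinate corresponding to the original index $m$ once row $l$ has been removed: position $m$ when $m<l$ and position $m-1$ when $m>l$. Hence $f_{ml}^T\,\text{adj}(B(l|l))\,b_{\cdot l}=\det(\tilde A)$, where $\tilde A$ is $B(l|l)$ with the column occupying the slot of the original index $m$ replaced by the reduced column $b_{\cdot l}$.

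Next I would identify $\tilde A$ with $B(l|m)$ up to a column permutation. Writing $B'$ for $B$ with row $l$ deleted, both matrices are $(K-1)\times(K-1)$ submatrices of $B'$ whose column set is $\{1,\dots,K\}\setminus\{m\}$: in $B(l|m)$ these columns appear in their natural increasing order, whereas in $\tilde A$ the column $l$ has been pushed into the slot vacated by column $m$. Thus $\tilde A$ and $B(l|m)$ differ only in the position of the single column $l$, so $\det(\tilde A)=\mathrm{sgn}(\sigma)\,\det(B(l|m))$, where $\sigma$ is the permutation carrying one column ordering into the other.

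Finally I would compute $\mathrm{sgn}(\sigma)$ by counting adjacent transpositions, which is the only delicate bookkeeping. If $m<l$, column $l$ occupies position $m$ in $\tilde A$ but position $l-1$ in $B(l|m)$ (the skipped column $m$ shifts everything beyond it down by one), so $\sigma$ slides one column across $(l-1)-m$ slots and $\mathrm{sgn}(\sigma)=(-1)^{\,l-1-m}=(-1)^{m+l+1}$. If $m>l$, column $l$ sits in position $m-1$ in $\tilde A$ and in position $l$ in $B(l|m)$, so the displacement is $(m-1)-l$ slots and $\mathrm{sgn}(\sigma)=(-1)^{\,m-1-l}=(-1)^{m+l+1}$ once more. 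Substituting this sign gives $f_{ml}^T\,\text{adj}(B(l|l))\,b_{\cdot l}=(-1)^{m+l+1}\det(B(l|m))$, as claimed. The main obstacle is purely the sign: one must track the displacement of the single distinguished column carefully and verify that the two cases collapse to the same parity, and a sanity check on a $3\times3$ example removes any off-by-one error in the exponent.
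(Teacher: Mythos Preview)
Your argument is correct. You use the Cramer--rule identity $\bigl(\text{adj}(A)v\bigr)_j=\det(A\text{ with column }j\text{ replaced by }v)$ to collapse the left--hand side into a single determinant $\det(\tilde A)$, then observe that $\tilde A$ and $B(l|m)$ share the same column set and differ only in the position of the column with original index $l$; counting the adjacent transpositions needed to slide that column into place gives the sign $(-1)^{m+l+1}$ in both cases. The bookkeeping is right.

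The paper proceeds differently: it expands $f_{ml}^T\,\text{adj}(B(l|l))\,b_{\cdot l}$ directly as a sum $\sum_{j\ne l}(\pm 1)\,M_{*\,m}(l|l)\,b_{jl}$ and then identifies each minor $M_{*\,m}(l|l)$ with the corresponding minor $M_{*,\,l-1}(l|m)$ (or $M_{*,\,l}(l|m)$ in the case $l<m$), thereby recognising the sum as the Laplace expansion of $\det(B(l|m))$ along the column that originally carried index $l$. So the paper works at the level of individual cofactors and matches them term by term, whereas you work at the level of whole determinants and track a single column permutation. Your route is more conceptual and pushes all the index bookkeeping into one sign computation at the end; the paper's route is more explicit but requires the minor identification $M_{jm}(l|l)=M_{j,l-1}(l|m)$, which is essentially the same re-indexing you perform globally. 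Either way the content is the same elementary linear algebra.
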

\begin{proof}
  Пусть $l>m$\,. Имеем, что 
\begin{equation*}
  e_m^T\,\text{adj}\,(B(l|l))b_{\cdot l}
=\sum_{j=1}^{l-1}(-1)^{m+j}M_{jm}(l|l)b_{jl}+
\sum_{j=l+1}^{K}(-1)^{m+j-1}M_{j-1,m}(l|l)b_{jl}\,.
\end{equation*}
Так как $M_{jm}(l|l)=M_{j,l-1}(l|m)$ при $j\le l-1$ и
$M_{j-1,m}(l|l)=M_{j-1,l-1}(l|m)$ при $j\ge l+1$\,, получаем, что
$e_m^T\,\text{adj}\,(B(l|l))b_{\cdot l}=(-1)^{m+l+1}\text{det}\,(B(l|m))\,.$

Рассмотрим случай $l<m$\,. Рассуждая аналогичным образом, имеем, что
\begin{multline*}
e_{m-1}^T\,\text{adj}\,(B(l|l))b_{\cdot l}=
\sum_{j=1}^{l-1}(-1)^{m+j-1}M_{j,m-1}(l|l)b_{jl}+
\sum_{j=l+1}^K (-1)^{m+j}M_{j-1,m-1}(l|l)b_{jl}=\\
=\sum_{j=1}^{l-1}(-1)^{m+j-1}M_{jl}(l|m)b_{jl}+
\sum_{j=l+1}^K (-1)^{m+j}M_{j-1,l}(l|m)b_{jl}=(-1)^{m+l+1}\,\text{det}\,(B(l|m))\,.
\end{multline*}
\end{proof}
\begin{lemma}
  \label{le:det}
Для произвольного $l=1,2,\ldots,K$  \begin{equation*}
    b_{ll}\,\text{det}(B(l|l))-b_{l\cdot}\text{adj}(B(l|l))b_{\cdot
      l}=\text{det}\,(B)\,.  
  \end{equation*}
\end{lemma}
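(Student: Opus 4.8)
The plan is to read off the identity from the Laplace (cofactor) expansion of $\det(B)$ along its $l$-th row, converting the off-diagonal contributions into the quadratic form $b_{l\cdot}\,\text{adj}(B(l|l))\,b_{\cdot l}$ by means of Lemma~\ref{le:adj}. First I would write
\[
\det(B)=\sum_{j=1}^K(-1)^{l+j}\,b_{lj}\,\det(B(l|j)),
\]
and split off the diagonal term $j=l$, which equals $b_{ll}\,\det(B(l|l))$ since $(-1)^{2l}=1$. It then suffices to identify the remaining sum $\sum_{j\ne l}(-1)^{l+j}b_{lj}\,\det(B(l|j))$ with $-\,b_{l\cdot}\,\text{adj}(B(l|l))\,b_{\cdot l}$, both $b_{l\cdot}$ and $b_{\cdot l}$ being understood as the reduced $(K-1)$-vectors obtained by deleting the $l$-th coordinate, as dictated by the dimensions in Lemma~\ref{le:adj}.

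The key step is to expand the reduced row $b_{l\cdot}$ in the basis of $\R^{K-1}$ attached to the minor $B(l|l)$. Deleting the index $l$ sends an original column index $j\ne l$ to position $j$ when $j<l$ and to position $j-1$ when $j>l$; comparing with the definition \eqref{eq:10}, this is exactly the slot marked by $f_{jl}$, so that $b_{l\cdot}=\sum_{j\ne l}b_{lj}\,f_{jl}^T$ as a $(K-1)$-row vector. Substituting this and applying Lemma~\ref{le:adj} term by term gives
\[
b_{l\cdot}\,\text{adj}(B(l|l))\,b_{\cdot l}
=\sum_{j\ne l}b_{lj}\,f_{jl}^T\,\text{adj}(B(l|l))\,b_{\cdot l}
=\sum_{j\ne l}(-1)^{j+l+1}\,b_{lj}\,\det(B(l|j)).
\]
Since $(-1)^{j+l+1}=-(-1)^{l+j}$, the right-hand side is precisely the negative of the remaining cofactor sum, and adding back the diagonal term yields the assertion.

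I do not expect any real difficulty here: the statement is a finite-dimensional linear-algebra identity, and the only place to be careful is the index-and-sign bookkeeping, namely checking that the reindexing $j\mapsto\text{pos}(j)$ induced by deleting $l$ matches the case split in \eqref{eq:10} and that the factor $(-1)^{j+l+1}$ of Lemma~\ref{le:adj} lines up with the cofactor sign $(-1)^{l+j}$. As a sanity check one may note that the identity is nothing but the Schur-complement formula $\det B=\det(B(l|l))\,(b_{ll}-b_{l\cdot}B(l|l)^{-1}b_{\cdot l})$, valid whenever $B(l|l)$ is invertible and extended to all $B$ by continuity; the route through Lemma~\ref{le:adj} has the advantage of requiring no invertibility hypothesis and of keeping the argument purely algebraic.
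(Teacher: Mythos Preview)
Your proposal is correct and follows essentially the same route as the paper: expand the reduced row $b_{l\cdot}$ as $\sum_{j\ne l}b_{lj}f_{jl}^T$, apply Lemma~\ref{le:adj} term by term, and identify the resulting sum with the off-diagonal part of the Laplace expansion of $\det(B)$ along row~$l$. The only cosmetic difference is that the paper starts from the quadratic form and arrives at $\det(B)$, whereas you start from the cofactor expansion of $\det(B)$; your Schur-complement remark is a welcome extra.
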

\begin{proof}
  Так как 
  \begin{equation*}
    b_{l\cdot}\,\text{adj}(B(l|l))b_{\cdot l}=
\sum_{j\not=l}b_{lj}f_{jl}^T\,\text{adj}\,(B(l|l))b_{\cdot l}\,,
  \end{equation*}
то, применяя лемму \ref{le:adj}, получаем, что
\begin{equation*}
      b_{l\cdot}\,\text{adj}(B(l|l))b_{\cdot l}
=\sum_{j\not=l}b_{lj}(-1)^{j+l+1}\,\text{det}\,(B(l|j))=
-\text{det}\,(B)+b_{ll}\,\text{det}\,(B(l|l))\,.
\end{equation*} \end{proof}
\begin{lemma}
  \label{le:tozd}
Имеют место равенства
\begin{equation}
  \label{eq:13}
    \frac{b_{m\cdot}\bl(B(m|m)\br)^{-1}b_{\cdot m}}{b_{mm}
-b_{m\cdot}\bl(B(m|m)\br)^{-1}b_{\cdot m}}
=\sum_{l\not=m}\frac{b_{lm}f_{ml}^T\bl(B(l|l)\br)^{-1}b_{\cdot l}}{b_{ll}
-b_{l\cdot}\bl(B(l|l)\br)^{-1}b_{\cdot l}}
\end{equation}
и
\begin{equation}
  \label{eq:20}
  -\,  \frac{b_{mm}f_{lm}^T(B(m|m))^{-1}b_{\cdot m}}{b_{mm}
-b_{m\cdot}(B(m|m))^{-1}b_{\cdot m}}
=-\,\frac{b_{lm}}{b_{ll}-b_{l\cdot}(B(l|l))^{-1}b_{\cdot l}}\\
+\sum_{\substack{k\not=l,\\k\not=m}}\frac{b_{km}f_{lk}^T(B(k|k))^{-1}b_{\cdot
    k}}
{b_{kk}-b_{k\cdot}(B(k|k))^{-1}b_{\cdot k}}\,.
\end{equation}
\end{lemma}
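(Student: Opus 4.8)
The plan is to reduce both identities to cofactor (Laplace) expansions of determinants, using the two preceding lemmas to convert the matrix-inverse quadratic forms into minors of $B$. Write $D_l:=\det(B(l|l))$, which is nonzero by hypothesis. Two translations drive everything. First, $\bl(B(l|l)\br)^{-1}=\text{adj}(B(l|l))/D_l$, so every inverse can be replaced by an adjugate over a principal minor. Second, Lemma~\ref{le:det} identifies the common denominators as Schur complements: taking $l=m$ there gives $b_{mm}-b_{m\cdot}(B(m|m))^{-1}b_{\cdot m}=\det(B)/D_m$, and in general $b_{ll}-b_{l\cdot}(B(l|l))^{-1}b_{\cdot l}=\det(B)/D_l$. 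For the off-diagonal numerators I would use Lemma~\ref{le:adj} in the form $f_{ml}^T(B(l|l))^{-1}b_{\cdot l}=(-1)^{m+l+1}\det(B(l|m))/D_l$.

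For \eqref{eq:13} I first treat the left-hand side. Lemma~\ref{le:det} makes the denominator $\det(B)/D_m$, and the same lemma rewrites the numerator as $b_{m\cdot}(B(m|m))^{-1}b_{\cdot m}=b_{mm}-\det(B)/D_m$; dividing, the left side collapses to $b_{mm}D_m/\det(B)-1$. In each summand on the right, Lemma~\ref{le:adj} turns the numerator into $b_{lm}(-1)^{m+l+1}\det(B(l|m))/D_l$ while Lemma~\ref{le:det} turns the denominator into $\det(B)/D_l$, so the factors $D_l$ cancel and the sum becomes $\det(B)^{-1}\sum_{l\neq m}(-1)^{m+l+1}b_{lm}\det(B(l|m))$. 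Clearing the common $\det(B)$, the assertion is exactly
\[
  b_{mm}D_m-\det(B)=\sum_{l\neq m}(-1)^{m+l+1}b_{lm}\det(B(l|m)),
\]
which is the Laplace expansion of $\det(B)$ along column $m$ with the diagonal term $b_{mm}D_m$ separated out.

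Identity \eqref{eq:20} is handled the same way, the only new feature being that the relevant cofactors now belong to a column different from the one whose entries appear. Here Lemma~\ref{le:adj} is applied with the index roles interchanged, giving $f_{lk}^T(B(k|k))^{-1}b_{\cdot k}=(-1)^{l+k+1}\det(B(k|l))/D_k$ and $f_{lm}^T(B(m|m))^{-1}b_{\cdot m}=(-1)^{l+m+1}\det(B(m|l))/D_m$. Substituting, every denominator becomes $\det(B)/D_\bullet$; cancelling the $D_\bullet$ and multiplying through by $\det(B)$, the claimed equality reduces to
\[
  b_{mm}(-1)^{l+m}\det(B(m|l))+b_{lm}D_l+\sum_{k\neq l,m}(-1)^{k+l}b_{km}\det(B(k|l))=0.
\]
This is the ``alien cofactor'' relation: it is the expansion along column $l$ of the determinant of the matrix obtained from $B$ by overwriting column $l$ with column $m$, a matrix with two equal columns and hence vanishing determinant. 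Splitting off its $k=l$ term (equal to $b_{lm}D_l$) and its $k=m$ term recovers precisely the displayed sum.

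The computations are routine once the translation is in place; the step demanding the most care is the bookkeeping of signs and index roles when invoking Lemma~\ref{le:adj}, since in \eqref{eq:20} the deleted-column label is $l$ rather than $m$, and one must split off exactly the two diagonal contributions ($k=l$ and $k=m$) so that what remains is an alien cofactor expansion. Throughout, the hypotheses $\det(B)\neq 0$ and $D_l\neq 0$ for all $l$ guarantee that every inverse and every division performed is legitimate.
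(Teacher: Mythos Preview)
Your proof is correct and follows essentially the same route as the paper: replace inverses by adjugates, invoke Lemma~\ref{le:det} to recognize every denominator as $\det(B)/D_l$, invoke Lemma~\ref{le:adj} to turn the numerators into off-diagonal minors, and then reduce \eqref{eq:13} to the Laplace expansion of $\det(B)$ along column $m$ and \eqref{eq:20} to the alien-cofactor identity $\sum_k(-1)^{k+l}b_{km}\det(B(k|l))=0$. The only cosmetic difference is that for \eqref{eq:13} you compute the left-hand side explicitly as $b_{mm}D_m/\det(B)-1$ before comparing, whereas the paper cancels the common denominator $\det(B)$ immediately and compares numerators directly.
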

\begin{proof}
Умножая числители и знаменатели в обеих частях \eqref{eq:13} на
определители обращаемых матриц, видим, что \eqref{eq:13} равносильно
равенству
\begin{equation}
  \label{eq:17}
      \frac{b_{m\cdot}\,\text{adj}\,(B(m|m))b_{\cdot
          m}}{b_{mm}\,\text{det}\,(B(m|m)) 
-b_{m\cdot}\,\text{adj}\,(B(m|m))b_{\cdot m}}
=\sum_{l\not=m}\frac{b_{lm}f_{ml}^T\,\text{adj}\,(B(l|l))
b_{\cdot l}}{b_{ll}\,\text{det}\,(B(l|l))
-b_{l\cdot}\,\text{adj}\,(B(l|l))b_{\cdot l}}\,.
\end{equation}
По лемме \ref{le:det}, знаменатели в \eqref{eq:17} равны 
$\text{det}(B)\not=0$\,.
Поэтому нужно доказать, что 
\begin{equation*}
  b_{m\cdot}\,\text{adj}\,(B(m|m))b_{\cdot
          m}
=\sum_{l\not=m}b_{lm}f_{ml}^T\,\text{adj}\,(B(l|l))
b_{\cdot l}\,.
\end{equation*}
Применяя лемму \ref{le:adj}, имеем, что
\begin{equation*}
  \sum_{l\not=m}b_{lm}f_{ml}^T\,\text{adj}\,(B(l|l))
b_{\cdot l}=\sum_{l\not=m}b_{lm}(-1)^{m+l+1}\,\text{det}(B(l|m))=
-\text{det}(B)+b_{mm}\,\text{det}(B(m|m))\,,
\end{equation*}
что завершает доказательство \eqref{eq:13} ввиду леммы \ref{le:det}.

Аналогично доказательству \eqref{eq:13} имеем, что \eqref{eq:20} эквивалентно равенству
\begin{equation}
  \label{eq:21}
  -b_{mm}f_{lm}^T\,\text{adj}\,(B(m|m))b_{\cdot m}
=-b_{lm}\,\text{det}\,(B(l|l))
+\sum_{\substack{k\not=l,\\k\not=m}}b_{km}f_{lk}^T\,\text{adj}\,(B(k|k))b_{\cdot
    k}\,.
\end{equation}
По лемме \ref{le:adj}
\begin{equation}
  \label{eq:39}
    \sum_{k\not=l}b_{km}f_{lk}^T\,\text{adj}\,(B(k|k))b_{\cdot
    k}=\sum_{k\not=l}b_{km}
(-1)^{k+l+1}\,\text{det}\,(B(k|l))\,.
\end{equation}
Так как $\sum_{k=1}^Kb_{km}
(-1)^{k+l}\,\text{det}\,(B(k|l))=0$ ввиду того, что
сумма в левой части
является определителем матрицы, получающейся из матрицы $B$ заменой
$l$--го столбца на $m$--й, то правая часть \eqref{eq:39} равна
$b_{lm}\textit{det}(B(l|l))\,.$ Полученное равенство эквивалентно \eqref{eq:21}.

\end{proof}

\begin{proof}[Доказательство теоремы \ref{the:H}]
Будем доказывать сначала второе утверждение.
Покажем, что \begin{align}
  \label{eq:7}
  \frac{1}{1-\sum_{l=1}^Ka_{ml}p_{ml}}&=1+
\sum_{k=1}^K\frac{a_{km}p_{km}}{1-\sum_{l=1}^K
  a_{kl}p_{kl}}\intertext{ и }
  \frac{a_{ml}}{1-\sum_{k=1}^Ka_{mk}p_{mk}}
&=\sum_{k=1}^K\frac{a_{kl}p_{km}}{1-\sum_{l'=1}^K
  a_{kl'}p_{kl'}}\text{ при условии, что } l\not=m\,.  \label{eq:7a}
\end{align}
Так как 
 $a_{mm}=1$\,, то равенство
 \eqref{eq:7} переписывается в виде
\begin{equation}
  \label{eq:9}
    \frac{\sum_{l\not=m}a_{ml}p_{ml}}{1-p_{mm}-\sum_{l\not=m}a_{ml}p_{ml}}=
\sum_{k\not=m}\frac{a_{km}p_{km}}{1-p_{kk}-\sum_{l\not=k}
  a_{kl}p_{kl}}\,.
\end{equation}
В силу \eqref{eq:3},  \eqref{eq:8} и
 \eqref{eq:10}, 
$a_{ml}=f_{lm}^T((I-P)(m|m))^{-1}p_{\cdot m}$ и
$a_{km}=f_{mk}^T((I-P)(k|k))^{-1}p_{\cdot
  k}$\,. Это позволяет  записать 
\eqref{eq:9} в виде
\begin{equation*}
  \frac{p_{m\cdot}\bl((I-P)(m|m)\br)^{-1}p_{\cdot m}}{1-p_{mm}
-p_{m\cdot}\bl((I-P)(m|m)\br)^{-1}p_{\cdot m}}
=\sum_{k\not=m}\frac{p_{km}f_{mk}^T\bl((I-P)(l|l)\br)^{-1}p_{\cdot k}}{1-p_{kk}
-p_{k\cdot}\bl((I-P)(l|l)\br)^{-1}p_{\cdot k}}\,,
\end{equation*}
что является частным случаем \eqref{eq:13} в утверждении
 леммы \ref{le:tozd} при $B=I-P$\,.
Равенство \eqref{eq:7} доказано.

Докажем \eqref{eq:7a}.
Перепишем это равенство в виде
\begin{equation}
  \label{eq:18}
  \frac{a_{ml}(1-p_{mm})}{1-p_{mm}-\sum_{k\not=m}a_{mk}p_{mk}}
=\frac{p_{lm}}{1-p_{ll}-\sum_{l'\not=l}
  a_{ll'}p_{ll'}}
+\sum_{\substack{k\not=l,\\k\not=m}}\frac{a_{kl}p_{km}}{1-p_{kk}-\sum_{l'\not=k}
  a_{kl'}p_{kl'}}\,,\text{ где } l\not=m\,.
\end{equation}
Как и выше, в силу  \eqref{eq:8} и \eqref{eq:10},
 \eqref{eq:18} эквивалентно соотношению
\begin{multline*}
    \frac{(1-p_{mm})f_{lm}^T((I-P)(m|m))^{-1}p_{\cdot m}}{1-p_{mm}
-p_{m\cdot}((I-P)(m|m))^{-1}p_{\cdot m}}
=\frac{p_{lm}}{1-p_{ll}-p_{l\cdot}((I-P)(l|l))^{-1}p_{\cdot l}}+\\
+\sum_{\substack{k\not=l,\\k\not=m}}\frac{p_{km}f_{lk}^T((I-P)(k|k))^{-1}p_{\cdot
    k}}
{1-p_{kk}-p_{k\cdot}((I-P)(k|k))^{-1}p_{\cdot k}}
\end{multline*}
и выполнено ввиду \eqref{eq:20} в утверждении леммы \ref{le:tozd}. Это завершает доказательство \eqref{eq:7a}.

Положим
\begin{equation}
  \label{eq:30}
  c_{mk}=  \dfrac{a_{mk}}{1-\sum_{l=1}^Ka_{ml}p_{ml}}
\end{equation}
и $C=(c_{mk})$\,.
Покажем, что
\begin{equation}
  \label{eq:26}
  C=(I-P^T)^{-1}\,.
\end{equation}
В силу \eqref{eq:30}, соотношение \eqref{eq:7a} принимает вид
$c_{ml}=\sum_{k=1}^K p_{km}c_{kl}$\,, так что $(m,l)$--элементы матриц $C$ и
$P^TC$ совпадают, если $m\not=l$\,. Так как $a_{mm}=1$\,, сoотношение
\eqref{eq:30} позволяет переписать \eqref{eq:7} в виде
$c_{mm}=1+\sum_{k=1}^Kp_{km}c_{km}$\,. Следовательно, диагональные элементы
матриц $C$ и $I+P^TC$ совпадают.
Таким образом, $C=I+P^TC$\,, что доказывает \eqref{eq:26}.
Так как $a_{mm}=1$\,, то, в силу \eqref{eq:30},
$1-\sum_{l=1}^Ka_{ml}p_{ml}=1/c_{mm}$ и 
\begin{equation}
  \label{eq:37}
  a_{mk}=\frac{c_{mk}}{c_{mm}}\,.
\end{equation}
Ввиду \eqref{eq:4}, \eqref{eq:25} и \eqref{eq:26},
$e^{\theta^{(m)}_m}=\mu_m/\sum_{k=1}^Kc_{mk}\lambda_k=
\mu_m/\nu_m=1/\rho_m$\,.

Для завершения доказательства теоремы \ref{the:H} заметим, что, в силу
\eqref{eq:27}, \eqref{eq:28} и определения $\theta^\ast$\,,
  \begin{equation*}
H_0 (\theta^\ast )= \sum_{m=1}^K(e^{\theta^{(m)}_m}-1)\bl(
\lambda_m+\sum_{k=1}^Ke^{-\theta^{(k)}_k}
p_{km}\mu_k-e^{-\theta^{(m)}_m}\mu_m\br)\,.
  \end{equation*}
Так как $e^{-\theta^{(k)}_k}=\nu_k/\mu_k$ и $\nu=\lambda+P^T\nu$\,, то
$\lambda_m+\sum_{k=1}^Ke^{-\theta^{(k)}_k}
p_{km}\mu_k-e^{-\theta^{(m)}_m}\mu_m=0$\,.
\end{proof}
Пусть
$\tilde\theta^{(J)}=(\tilde\theta^{(J)}_1,\ldots,\tilde\theta^{(J)}_K)^T$
задаётся соотношениями
 $\tilde\theta^{(J)}_k=\theta^{(k)}_k$\,, если 
$k\not\in J$\,, и $h_k(\tilde\theta^{(J)})=0$ при $k\in J$\,.
Заметим, что если $J=\{1,2,\ldots,K\}\setminus \{m\}$\,, то
$\tilde\theta^{(J)}=\theta^{(m)}$\,.
Ввиду \eqref{eq:69} и \eqref{eq:28},
\begin{equation}
  \label{eq:70}
  H_J(\tilde\theta^{(J)})=H_0 (\tilde\theta^{(J)})\,.
\end{equation}
Если движение происходит в гранях
$F_{J_1},\ldots,F_{J_k}$ со смещениями $s_1,\ldots,s_k$
в течение промежутков времени $t_1,\ldots,t_k$\,, соответственно,
 то,   так как $ H_0 (\tilde\theta^{(J)})=0$\,, как будет показано в
следующей лемме, так как $\tilde\theta^{(J)}\cdot x=
\theta^\ast \cdot x$ при $x\in F_J$ и так как имеет место \eqref{eq:70}\,, 
  затраты равны 
  \begin{multline}
    \label{eq:11}
   \sum_{l=1}^kt_lL_{J_l}\bl(\frac{s_l}{t_l}\br)= \sum_{l=1}^k
  \sup_{\theta\in\R^K}(\theta\cdot s_l-t_lH_{J_l}(\theta))
\ge \sum_{l=1}^k\bl(
\tilde\theta^{(J_l)}\cdot s_l-t_lH_{J_l}(\tilde\theta^{(J_l)})\br)=\\
=\sum_{l=1}^k\br(\theta^\ast \cdot s_l-t_lH_0(\tilde\theta^{(J_l)})\br)
=\theta^\ast \cdot (s_1+\ldots +s_k)=\theta^\ast\cdot r\,,
\end{multline}
где через $r$ обозначена точка, в которую необходимо попасть.
В общем случае, если $q(0)=0$ и $q(t)=r$\,, то, с учётом того, что
$\dot q_k(s)=0$ для $k\in J$ п.в. на множестве $\{s:\,q(s)\in F_J\}$\,,
\begin{multline}
  \label{eq:41}
  \int_0^tL(q(s),\dot q(s))\,ds=
\sum_J\int_0^t\mathbf1_{F_J}(q(s))L_J(\dot q(s))\,ds\ge
\sum_J\int_0^t\mathbf1_{F_J}(q(s))
\bl(\tilde\theta^{(J)} \cdot \dot
q(s)-H_0(\tilde\theta^{(J)})\br)\,ds=
\\=\sum_J\int_0^t\mathbf1_{F_J}(q(s))
\bl(\theta^\ast \cdot \dot
q(s)-H_0(\tilde\theta^{(J)})\br)\,ds=\theta^\ast\cdot r\,.
\end{multline}
Если траектория из $O$ в $r$ такова,
что в \eqref{eq:11} и \eqref{eq:41} достигаются равенства,
т.е., выполнены равенства~\eqref{eq:24} с $\theta=\tilde\theta^{(J)}$\,, то эта траектория
оптимальна. Поскольку функции $H_{J}(\theta)$ строго выпуклы, так
что их
 производные инъективны, то такая траектория будет единственной
оптимальной. 

Следующая объявленная лемма обобщает соотношение
$H_0(\theta^\ast)=0$\,. 
\begin{lemma}
\label{le:subspace}
Имеет место равенство $ H_0 (\tilde\theta^{(J)})=0$\,.  
\end{lemma}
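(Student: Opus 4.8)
The plan is to reproduce, for the mixed vector $\tilde\theta^{(J)}$, the same bookkeeping that gave $H_0(\theta^\ast)=0$ in the proof of Theorem \ref{the:H}, exploiting the two facts built into the definition of $\tilde\theta^{(J)}$. For $k\in J$ the relation $h_k(\tilde\theta^{(J)})=0$ simply deletes the corresponding terms from the service sum in \eqref{eq:28}; for $k\notin J$ we have $\tilde\theta^{(J)}_k=\theta^{(k)}_k$, so the troublesome factor $e^{-\tilde\theta^{(J)}_k}\mu_k$ collapses to $\rho_k\mu_k=\nu_k$ by the identity $e^{-\theta^{(m)}_m}=\rho_m$ of Theorem \ref{the:H}. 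Hence only indices $k\notin J$ survive in $\sum_k h_k(\tilde\theta^{(J)})\mu_k$, and on them every awkward exponential weight is already a coordinate of $\nu$.

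Setting $w_l=e^{\tilde\theta^{(J)}_l}-1$ and expanding $h_k$ from \eqref{eq:27} for $k\notin J$, I would swap the order of the double summation exactly as in the proof of Theorem \ref{the:H} and collect
\begin{equation*}
  H_0(\tilde\theta^{(J)})=\sum_{l=1}^K w_l\Bl(\lambda_l+\sum_{k\notin J}\nu_k p_{kl}\Br)+\sum_{k\notin J}(\nu_k-\mu_k).
\end{equation*}
The reason for isolating the bracket is the traffic equation \eqref{eq:25} in the form $\lambda_l=\nu_l-\sum_{k=1}^K\nu_k p_{kl}$, which turns it into $\nu_l-\sum_{k\in J}\nu_k p_{kl}$, replacing a sum over indices outside $J$ by one over indices inside $J$.

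The decisive step, which I expect to carry the real content, is to feed the defining equations for $k\in J$ back in. Indeed $h_k(\tilde\theta^{(J)})=0$ is exactly $w_k=\sum_{l=1}^K w_l p_{kl}$, so
\begin{equation*}
  \sum_{l=1}^K w_l\sum_{k\in J}\nu_k p_{kl}=\sum_{k\in J}\nu_k\sum_{l=1}^K w_l p_{kl}=\sum_{k\in J}\nu_k w_k,
\end{equation*}
and this cancels the $l\in J$ part of $\sum_l w_l\nu_l$. What remains is $\sum_{k\notin J}(w_k\nu_k+\nu_k-\mu_k)=\sum_{k\notin J}(\nu_k e^{\tilde\theta^{(J)}_k}-\mu_k)$, and since $e^{\tilde\theta^{(J)}_k}=e^{\theta^{(k)}_k}=\mu_k/\nu_k$ for $k\notin J$, every summand is zero, giving $H_0(\tilde\theta^{(J)})=0$. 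The one genuine obstacle is to spot that the traffic equation is precisely the device matching the external weighting $\sum_{k\notin J}\nu_k p_{kl}$ against the internal equations $w_k=\sum_l w_l p_{kl}$ so that the two halves telescope; the remaining manipulations duplicate those already performed for $\theta^\ast$.
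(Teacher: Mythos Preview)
Your proof is correct and follows essentially the same route as the paper: eliminate the $k\in J$ service terms via $h_k(\tilde\theta^{(J)})=0$, replace $e^{-\tilde\theta^{(J)}_k}\mu_k$ by $\nu_k$ for $k\notin J$ using $e^{-\theta^{(k)}_k}=\rho_k$, apply the traffic equation \eqref{eq:25} to turn $\lambda_l+\sum_{k\notin J}\nu_k p_{kl}$ into $\nu_l-\sum_{k\in J}\nu_k p_{kl}$, and then feed in $w_k=\sum_l w_l p_{kl}$ for $k\in J$ to cancel the $l\in J$ contributions. The paper carries the $m\in J$ versus $m\notin J$ split in the arrival sum from the outset (see \eqref{eq:48}--\eqref{eq:50}), whereas you keep $\sum_l w_l$ intact until the final step, which makes your bookkeeping slightly leaner; otherwise the ingredients and their order of use coincide.
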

\begin{proof}
Ввиду \eqref{eq:27}, \eqref{eq:28}, определения $\tilde\theta^{(J)}$ и
равенства $e^{-\theta^{(m)}_m}=\rho_m$ \,,
  \begin{multline}
    \label{eq:48}
     H_0 (\tilde\theta^{(J)})=
\sum_{m\not\in J}(e^{\theta^{(m)}_m}-1)\lambda_m+
\sum_{m\in J}(e^{\tilde\theta^{(J)}_m}-1)\lambda_m+\\+
\sum_{k\not\in J}\Bl(e^{-\theta^{(k)}_k}\bl(
\sum_{m\not\in J}(e^{\theta^{(m)}_m}-1)p_{km}+
\sum_{m\in J}(e^{\tilde\theta^{(J)}_m}-1)p_{km}+1\br)-1\Br)\mu_k=\\
=\sum_{m\not\in J}(e^{\theta^{(m)}_m}-1)\bl(\lambda_m+\sum_{k\not\in
  J}e^{-\theta^{(k)}_k}p_{km}\mu_k -e^{-\theta^{(m)}_m}\mu_m\br)+
\sum_{m\in J}(e^{\tilde\theta^{(J)}_m}-1)\bl(\lambda_m
+\sum_{k\not\in J}e^{-\theta^{(k)}_k}p_{km}\mu_k\br)=\\
=\sum_{m\not\in J}(e^{\theta^{(m)}_m}-1)\bl(\lambda_m+\sum_{k\not\in
  J}p_{km}\nu_k -\nu_m\br)+
\sum_{m\in J}(e^{\tilde\theta^{(J)}_m}-1)\bl(\lambda_m
+\sum_{k\not\in J}p_{km}\nu_k\br)\,.
  \end{multline}
Так как ввиду определения $\nu$ (см. \eqref{eq:25}), 
$    \lambda_m+\sum_{k\not\in J}p_{km}\nu_k=\nu_m-\sum_{k\in J}p_{km}\nu_k\,,
$ то 
\begin{equation}
  \label{eq:49}
    \sum_{m\in J}(e^{\tilde\theta^{(J)}_m}-1)\bl(\lambda_m
+\sum_{k\not\in J}p_{km}\nu_k\br)=
  \sum_{m\in J}(e^{\tilde\theta^{(J)}_m}-1)\nu_m-
 \sum_{k\in J}\nu_k \sum_{m\in J}(e^{\tilde\theta^{(J)}_m}-1)p_{km}\,.
\end{equation}
Так как $h_k(\tilde\theta^{(J)})=0$ при $k\in J$\,, то 
\begin{equation}
  \label{eq:50}
    \sum_{m\in J}(e^{\tilde\theta^{(J)}_m}-1)p_{km}=
e^{\tilde\theta^{(J)}_k}-1-\sum_{m\not\in J}(e^{\theta^{(m)}_m}-1)p_{km}\,.
\end{equation}
Таким образом, правая часть \eqref{eq:49} равна
$  \sum_{m\not\in J} (e^{\theta^{(m)}_m}-1)\sum_{k\in J}p_{km}\nu_k\,.$
В силу \eqref{eq:48},
\begin{equation*}
   H_0 (\tilde\theta^{(J)})=
\sum_{m\not\in J}(e^{\theta^{(m)}_m}-1)\bl(\lambda_m+\sum_{k=1}^Kp_{km}\nu_k -\nu_m\br)\,.
 \end{equation*}
Так как 
$  \lambda_m+\sum_{k=1}^Kp_{km}\nu_k -\nu_m=0
$ ввиду \eqref{eq:25}, то $H_0 (\tilde\theta^{(J)})=0$\,.
\end{proof}
\section{Существенные и несущественные грани}
\label{sec:sush}
В этом разделе исследуется движение 
 в гранях и приводится
основной результат. 
Скажем, что грань $F_J$ является
существенной, если траектория  с импульсом
$\tilde\theta^{(J)}$\,, которая проходит через $F_J$\,, существует. В
противном случае грань называется несущественной.
Поскольку $\dot q_k(t)=0$ п.в. при $k\in J$\,, в силу леммы
\ref{le:urnie},
 если грань $F_J$ существенна, то
\begin{equation}
  \label{eq:32}
  0\in\partial_k H_J(\tilde
\theta^{(J)})\text{ при }k\in J\,.
\end{equation}
Заметим, что, см. \eqref{eq:27},
\begin{equation}
  \label{eq:12}
      \partial_kh_k(\theta)=
-e^{-\theta_k}(1-\sum_{l=1}^Kp_{kl}+\sum_{l\not=k}e^{\theta_l}p_{kl})
\end{equation}
и
\begin{equation}
  \label{eq:19}
    \partial_l h_k(\theta)=e^{-\theta_k}e^{\theta_l}p_{kl}\,, \;l\not=k\,.
\end{equation}
Как следствие,
\begin{equation}
  \label{eq:34}
      \partial_lh_k(\theta)>0\,, \text{ если } l\not=k
 \text{ и } \partial_kh_k(\theta)<0\,.
\end{equation}
Заметим также, что если $h_k(\theta)=0$\,, то
 $\partial h_k(\theta)^+$ -- это множество векторов 
$\alpha \nabla h_k(\theta)$\,, где $\alpha\in[0,1]$\,.

  Ввиду \eqref{eq:24} с $\theta=\tilde \theta^{(J)}$\,, \eqref{eq:12},
   \eqref{eq:19} и равенства $h_l(\tilde\theta^{(J)})=0$ при $l\in J$\,, если
  грань $F_J$ является существенной, то 
 существуют числа $\alpha^{(J)}_l\,,l\in J\,,$ из интервала
$[0,1]$ такие, что, при $k\in J$\,, п.в.,
\begin{multline*}
0=\dot q_k(t)=    e^{\tilde\theta^{(J)}_k}\lambda_k+
\sum_{l\not\in J}\partial_k h_l(\tilde\theta^{(J)})\mu_l
+\sum_{l\in J}\alpha_l^{(J)}\partial_k h_l(\tilde\theta^{(J)})\mu_l=
e^{\tilde\theta^{(J)}_k}\lambda_k+
\sum_{l\not\in J}e^{-\theta^{(l)}_l}e^{\tilde\theta^{(J)}_k}p_{lk}\mu_l+\\
+\sum_{\substack{l\in
    J,\\l\not=k}}\alpha_l^{(J)}e^{-\tilde\theta^{(J)}_l}e^{\tilde\theta^{(J)}_k}p_{lk}\mu_l
-\alpha^{(J)}_ke^{-\tilde\theta^{(J)}_k}(1-\sum_{l=1}^Kp_{kl}+\sum_{l\not=k}e^{\tilde\theta^{(J)}_l}p_{kl})\mu_k\,.
\end{multline*}
Так как $h_k(\tilde \theta^{(J)})=0$\,, то
\begin{equation}
  \label{eq:15}
      1-\sum_{l=1}^Kp_{kl}+\sum_{l\not=k}e^{\tilde\theta^{(J)}_l}p_{kl}
=e^{\tilde\theta^{(J)}_k}(1-p_{kk})\,.
\end{equation}
 Поэтому требуется, чтобы при $k\in J$\,,
\begin{equation*}
    \lambda_k+
\sum_{l\not\in J}p_{lk}\nu_l
+\sum_{l\in
    J}\alpha_l^{(J)}e^{-\tilde\theta^{(J)}_l}p_{lk}\mu_l
-\alpha_k^{(J)}e^{-\tilde\theta^{(J)}_k}\mu_k
=0\,.
\end{equation*}
Так как 
$\lambda_k+
\sum_{l\not\in J}p_{lk}\nu_l=\nu_k-\sum_{l\in J}p_{lk}\nu_l$\,, то,
эквивалентным образом,
\begin{equation*}
\alpha_k^{(J)}e^{-\tilde\theta^{(J)}_k}\mu_k-  \sum_{l\in
    J}\alpha_l^{(J)}e^{-\tilde\theta^{(J)}_l}p_{lk}\mu_l
=\nu_k-\sum_{l\in J}p_{lk}\nu_l\,.
\end{equation*}
В векторной форме,
\begin{equation*}
  (I-P^T)(J^c|J^c)(\alpha_l^{(J)}e^{-\tilde\theta^{(J)}_l}\mu_l\,, l\in J)^T=
(I-P^T)(J^c|J^c)(\nu_l\,,l\in J)^T\,.
\end{equation*}
Так как матрица $(I-P^T)(J^c|J^c)$ невырождена, то при $l\in J$
\begin{equation}
  \label{eq:54}
  \alpha_l^{(J)}e^{-\tilde\theta^{(J)}_l}\mu_l=\nu_l\,.
\end{equation}
 Поэтому для того, 
чтобы грань $F_J$ была существенной, необходимо, чтобы
\begin{equation}
  \label{eq:55}
\tilde\rho^{(J)}_l\ge \rho_l\,,l\in J,
\end{equation}
где обозначено
\begin{equation}
  \label{eq:43}
    \tilde\rho^{(J)}_l=e^{-\tilde\theta^{(J)}_l}\,. 
\end{equation}

Так как ввиду  \eqref{eq:50},
\begin{equation*}
  (I-P)(J^c|J^c)((\tilde\rho^{(J)}_l)^{-1}-1\,, l\in
  J)^T=P(J^c|J)(\rho_l^{-1}-1\,, l\not\in J)^T\,,
\end{equation*}
то
\begin{equation}
  \label{eq:45}
    ((\tilde\rho^{(J)}_l)^{-1}-1,\,l\in J)^T=\bl((I-P)(J^c|J^c)\br)^{-1}
P(J^c|J)(\rho_l^{-1}-1,\,l\not\in J)^T\,.
\end{equation}
Поскольку матрица $\bl((I-P)(J^c|J^c)\br)^{-1}
P(J^c|J)$ неотрицательна, то $\tilde\rho^{(J)}_l\le1$ при $l\in J$\,.
Кроме того, условие существенности  \eqref{eq:55} эквивалентно тому,
что, покомпонентно,
\begin{equation}
  \label{eq:46}
      (\rho_l^{-1}-1\,, l\in J)^T\ge
\bl((I-P)(J^c|J^c)\br)^{-1}
P(J^c|J)(\rho_l^{-1}-1,\,l\not\in J)^T\,.
\end{equation}
Если 
 $k\notin J$\,, то, ввиду \eqref{eq:24} с
 $\theta=\tilde\theta^{(J)}$\,, \eqref{eq:25}, \eqref{eq:12}, \eqref{eq:19},
и \eqref{eq:54}, для траектории с импульсом $\tilde\theta^{(J)}$ в
 грани $F_J$ имеем, что
 \begin{multline}
   \label{eq:61}
   \dot{  q}_k(t)=  e^{\theta^{(k)}_k}\lambda_k+
\sum_{\substack{l\not\in J,\\l\not=k}}e^{-\theta^{(l)}_l}e^{\theta^{(k)}_k}p_{lk}\mu_l
+\sum_{l\in
    J}\alpha_l^{(J)}e^{-\tilde\theta^{(J)}_l}e^{\theta^{(k)}_k}p_{lk}\mu_l-\\
-e^{-\theta^{(k)}_k}(1-\sum_{l=1}^Kp_{kl}+\sum_{l\not=k}e^{\tilde\theta^{(J)}_l}p_{kl})
\mu_k
= \rho_k^{-1}\bl(\lambda_k+
\sum_{l=1}^Kp_{lk}\nu_l\br)
-(1+\sum_{l=1}^K(e^{\tilde\theta^{(J)}_l}-1)p_{kl})
\nu_k=\\
= \mu_k
-(1+\sum_{l\notin J}(\rho_l^{-1}-1)p_{kl}+
\sum_{l\in J}((\tilde\rho^{(J)}_l)^{-1}-1)p_{kl})
\nu_k\,.
\end{multline}
Аналогичные рассуждения (или формальная подстановка $J=\emptyset$ в
последнее выражение в \eqref{eq:61}) пoказывают, что для любого $k$
\begin{equation}
  \label{eq:62}
    \partial_k H_0 (\theta^\ast )
=\mu_k-
\bl(1+\sum_{l=1}^K(\rho_l^{-1}-1)p_{kl}\br)\nu_k\,,
\end{equation}
так что, при $k\not\in J$\,,
\begin{equation}
  \label{eq:64}
  \dot{  q}_k(t)=\partial_k H_0 (\theta^\ast )+
\sum_{l\in J}(\rho_l^{-1}-(\tilde\rho^{(J)}_l)^{-1}) p_{kl}
\nu_k\,.
\end{equation}
Так как, ввиду \eqref{eq:15} и \eqref{eq:43}, для $k\in J$ \eqref{eq:62}   может быть записано в виде
\begin{equation}
  \label{eq:1}
    \partial_k H_0 (\theta^\ast )=(\rho_k^{-1}-(\tilde\rho_k^{(J)})^{-1})\nu_k
-\sum_{l=1}^Kp_{kl}(\rho_l^{-1}-(\tilde\rho_l^{(J)})^{-1})\nu_k
\end{equation}
и $\dot q_k(t)=0$ п.в.,
то п.в. для $k\in J$
\begin{equation}
  \label{eq:29}
    \dot{  q}_k(t)=\partial_k H_0 (\theta^\ast )-
(\rho_k^{-1}-(\tilde\rho_k^{(J)})^{-1})\nu_k+
\sum_{l\in J}(\rho_l^{-1}-(\tilde\rho^{(J)}_l)^{-1}) p_{kl}
\nu_k\,.
\end{equation}
 Напомним определение двойственной сети
Джексона.
В такой сети интенсивности
обслуживания в узлах те же, что в исходной сети. Потоки
двойственной сети имеют те же интенсивности, что и в
исходной, но направлены в противоположную сторону. Таким образом,
обозначая параметры двойственной сети теми же символами, но с верхней
чертой, имеем, что $\overline \mu_k=\mu_k$\,, $\overline \nu_k=\nu_k$
и $\overline\nu_k \overline p_{kl}=\nu_l p_{lk}$\,. Также меняются
местами входящие и выходящие потоки:
$\overline \lambda_k=\nu_k(1-\sum_{l=1}^K p_{kl})$ и 
$ \lambda_k=\nu_k(1-\sum_{l=1}^K\overline p_{kl})$\,. 
(Следующее рассуждение показывает, что матрица $\overline P=(\overline p_{kl})$
является субстохастической. Так как $\nu=\lambda+P^T\nu$\,, то $\nu\ge
P^T\nu$ покомпонентно. Поэтому $\nu_k\ge \sum_{l=1}^K p_{lk}\nu_l$\,, т.е.,
$\sum_{l=1}^K\overline p_{kl}=\sum_{l=1}^K p_{lk}\nu_l/\nu_k\le 1$\,.)
Если сеть Джексона находится в стационарном состоянии, то
обращение во времени приводит к стационарной двойственной
сети.

В этих обозначениях,
в силу \eqref{eq:64} и \eqref{eq:29}, если $ q (t)\in F_J$\,, то п.в.
\begin{equation*}
  \dot{  q}(t)=\nabla H_0 (\theta^\ast )-
(I-\overline P^T)\varphi^{(J)}\,,
\end{equation*}
где
\begin{equation}
  \label{eq:66}
  \varphi^{(J)}_l=
  \begin{cases}
    (\rho_l^{-1}-(\tilde\rho^{(J)}_l)^{-1})\nu_l\,,
&\text{ если } l\in J\,,\\
0\,,&\text{ в противном случае.}
  \end{cases}
\end{equation}
Кроме того, \eqref{eq:62} принимает вид
\begin{equation}
  \label{eq:14}
    -\nabla H_0 (\theta^\ast )=\overline\lambda-(I-\overline{P}^T)\mu\,,
\end{equation}
где $\mu=(\mu_1,\ldots,\mu_K)^T$\,.
(Заметим, что правую часть также можно записать как $\nabla \overline
H_0(0)$\,.) Пусть $T>0$\,.
 Обозначая  $\overline q(t)= q (T-t)$\,, где $0\le t\le T$\,,
  получаем, что п.в.
\begin{equation}
  \label{eq:67}
  \dot{\overline{q}}(t)=\overline\lambda -(I-\overline P^T)\mu+
(I-\overline P^T)\dot\phi(t)\,,
\end{equation}
где
\begin{equation}
  \label{eq:68}
  \dot\phi(t)=\sum_{J}\ind_{F_J}(\overline q(t))\varphi^{(J)}\,.
\end{equation}
Так как $\dot\phi_k(t)=0$\,, если $\overline q_k(t)>0$ ввиду \eqref{eq:66}, то \eqref{eq:67}
означает, что  $\overline q(t)$ получается косым отражением функции 
$q(T)+(\overline\lambda -(I-\overline P^T)\mu)t$\,, т.е., является
жидкостной траекторией, начинающейся в конечной точке $r$\,, длин очередей в
 сети Джексона с интенсивностями входных потоков
$\overline \lambda_k$\,, интенсивностями обслуживания $\mu_k$ и
матрицей переходов $\overline P$\,. Покажем обратное: если $\overline
q$ -- жидкостная траектория двойственной системы, начинающаяся в
конечной точке $r$\,,
то её обращение во времени есть оптимальная траектория. Так как $\overline
q$ -- это решение задачи косого отражения, то имеет местo
\eqref{eq:67}\,, где $\phi(t)$ -- покомпонентно неубывающая абсолютно
непрерывная функция, такая что $ \overline
q_k(t)\dot\phi_k(t)=0$ п.в. Ввиду \eqref{eq:14}, если $\overline q(t)\in F_J$\,, то
для сужений векторов на $J$ и сужений матриц на $J\times J$ будем
иметь, что п.в.
$\dot\phi_J(t)=(I_{J,J}-P^T_{J,J})^{-1}\bl(-\nabla H_0(\theta^\ast)_J\br)$\,.
В силу \eqref{eq:1} и \eqref{eq:66},
$\dot\phi_J(t)=\varphi^{(J)}_J$\,, т.е.,  выполнено \eqref{eq:68}.
Таким образом, обращение траектории 
$\overline q$ проходит через $F_J$ с импульсом $\tilde \theta^{(J)}$\,. Поэтому грани
$F_J$ в \eqref{eq:68} -- существенные. Мы также имеем, что
 $\varphi^{(J)}_l\ge0$ при $l\in J$ и что \eqref{eq:55} не только
 необходимо, но и достаточно для существенности $F_J$\,, поскольку
 жидкостная траектория, начинающаяся в $F_J$\,, будет там находиться
 некоторое время, если \eqref{eq:55} выполнено.
Следовательно, обращение оптимальной траектории с импульсами
$\tilde\theta^{(J)}$\,, которая
попадает из некоторой точки $A$ в некоторую точку $B$\,, является
жидкостной траекторией для попадания из $B$ в $A$ для двойственной
сети и наоборот.  

Так как исходная сеть Джексона предполагается эргодической, то
двойственная  сеть  тоже эргодична, так что
 жидкостная траектория
двойственной сети, начинающаяся в точке $r$\,, попадает, в конце
концов, в начало координат. Более того, соответствующая траектория
состоит из конечного числа отрезков, принадлежащих граням $F_J$ с
возрастающими
 по
включению множествами $J$\,, кроме, возможно ''одноточечного отрезка'',
представляющего собой начальную точку, 
см., напр., лемму 5.3 на с.142 и лемму
5.4 на с.143 в Bramson \cite{Bra06}.
Поэтому оптимальная траектория из $O$ в $r$ существует, 
 единственна и состоит из
конечного числа отрезков, принадлежащих граням $F_J$ с
 убывающими по включению
множествами $J$\,, кроме, возможно ''одноточечного отрезка'',
представляющего собой конечную точку, ср. рис. 8 ниже.
  Пусть $T^\ast$ -- время, за которое жидкостная траектория
 $\overline
q(t)$\,, начинающаяся в конечной точке $r$\,, попадает в
начало координат. Положим $ q^\ast (t)=\overline q(T^\ast-t)$ при $0\le
t\le  T^\ast $\,.  (Напомним, что $ q^\ast $
достигает нижней границы затрат, ср. \eqref{eq:11}.)
Нами  доказано следующее утверждение. 
\begin{theorem}
  \label{the:optimum}
Построенная траектория $ q^\ast (t)$ является единственной  оптимальной с точностью до времени, проведённого  в начале координат.
Движение в существенной грани $F_J$ происходит с импульсом $\tilde
\theta^{(J)}$\,.    Затраты на
достижение  точки  $r$ при
движении по этой траектории равны
$\int_0^{ T^\ast } L( q^\ast (t),\dot{  q^\ast }(t))\,dt=\theta^\ast \cdot
r$\,. 
\end{theorem}

Как следствие теоремы \ref{the:optimum}, получаем, что жидкостная
траектория $\overline q$ 
двойственной сети, начинающаяся в  точке 
$r$\,, достигает начала координат, проходя через последовательность
существенных граней, в каждой из которых она проводит положительное
время.  Если начальное значение  принадлежит
несущественной грани, то  траектория сразу же уходит из этой
грани. В существенной  грани $F_J$ движение происходит в соответствии
с уравнениями $\dot{\overline q}_k(t)=-\partial_k H_0 (\theta^\ast )-
\sum_{l\in J}(\rho_l^{-1}-(\tilde\rho^{(J)}_l)^{-1}) p_{kl}
\nu_k$\,, если $k\not\in J$\,, и $\dot{\overline q}_k(t)=0$ п.в., если
$k\in J$\,, ср., \eqref{eq:61} и \eqref{eq:64}.
Заметим также, что поскольку 
$  \dot{\overline q}(t)=\overline \lambda+
(\overline P^T-I)\overline{d}^{(J)}\,,
$ где через $\overline d^{(J)}
=(\overline d_1^{(J)},\ldots,\overline d^{(J)}_K)^T$ обозначен
вектор интенсивностей потоков,
протекающих через узлы, при
$\overline q(t)\in F_J$\,, то, в силу \eqref{eq:67} и \eqref{eq:68},
 $\overline d^{(J)}=\mu-\varphi^{(J)}$\,. Ввиду
\eqref{eq:66}, $\overline d^{(J)}_l=\mu_l$\,, если $l\notin рJ$\,, и 
$\overline d_l^{(J)}=(\tilde{\rho}^{(J)}_l)^{-1}\nu_l$\,, если $l\in J$\,.
Так как $1\ge\tilde{\rho}^{(J)}_l\ge\rho_l$ при условии, что грань 
$F_J$ существенна, то в этом случае $\nu_l\le\overline d^{(J)}_l\le
\mu_l$\,.
В силу \eqref{eq:35a} и \eqref{eq:35b}, отсюда следует, что 
для двойственной сети $\overline L_J(\overline\lambda+
(\overline P^T-I)\overline d^{(J)})=0$\,. Как следствие, 
\begin{equation}
  \label{eq:33}
   \overline
L(\overline q(t),\dot{\overline q}(t))=0 \text{ п.в.}
\end{equation}

Условие существенности имеет относительно простой вид для случая полуоси.
Если грань $F_J$ -- это полуось $x_m$\,, то \eqref{eq:45} принимает вид
\begin{equation}
  \label{eq:23}
    (\tilde\rho^{(J)}_l)^{-1}-1=a_{ml}(\rho_m^{-1}-1)\,,  l\not=m,
\end{equation}
а \eqref{eq:46}, с учётом \eqref{eq:8}, --
\begin{equation}
  \label{eq:42}\rho_l^{-1}-1\ge
a_{ml}(\rho_m^{-1}-1)\,,  l\not=m\,.
\end{equation}
(Разумеется, \eqref{eq:42}
 можно получить напрямую из \eqref{eq:54}.)
Подстановка \eqref{eq:23} в \eqref{eq:61} с учётом того, что, как
показано в доказательстве теоремы \ref{the:H},
$\sum_{l=1}^ma_{ml}p_{ml}=1-1/c_{mm}$\,, показывает, что
если полуось $x_m$ -- существенна, то
\begin{equation*}
\dot{  q}^\ast_m(t)=
\frac{1-\rho_m}{c_{mm}}\,\mu_m>0\,,
\end{equation*}
как и следовало ожидать.
Следующее свойство матрицы $C$ доказано в приложении.
\begin{lemma}
  \label{le:sush_os'}
Имеет место неравенство
$  c_{ml}\le c_{mm}\,.$
\end{lemma}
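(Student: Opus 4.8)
The plan is to recognize $c_{ml}$ as the Green's function of the substochastic routing chain and to compare expected visit counts by a first-passage decomposition. First I would record the probabilistic meaning of $C$: by \eqref{eq:26} we have $C=(I-P^T)^{-1}$, and since $P=(p_{kl})$ has nonnegative entries with $\sum_l p_{kl}\le1$ while $I-P$ is invertible, the Neumann series converges, so $C=\sum_{n\ge0}(P^T)^n$ and $c_{ml}=\sum_{n\ge0}[(P^T)^n]_{ml}=\sum_{n\ge0}[P^n]_{lm}$. Thus $c_{ml}=G(l,m):=\mathbf E_l\,\#\{n\ge0:\,X_n=m\}$ is the expected number of visits to $m$ before absorption for the defective Markov chain $(X_n)$ with one-step matrix $P$ started at $l$; in particular $c_{ml}\ge0$ and $c_{mm}\ge1>0$.

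Next I would compare these visit counts by conditioning on the first passage to $m$. Writing $\tau_m=\inf\{n\ge1:\,X_n=m\}$, for $l\ne m$ the splitting of every path from $l$ to $m$ at time $\tau_m$ gives $[P^n]_{lm}=\sum_{t=1}^n\mathbf P_l(\tau_m=t)\,[P^{n-t}]_{mm}$; summing over $n$ and invoking the strong Markov property yields $c_{ml}=\mathbf P_l(\tau_m<\infty)\,c_{mm}$. Since $\mathbf P_l(\tau_m<\infty)\le1$ and $c_{mm}>0$, this gives $c_{ml}\le c_{mm}$, with equality for $l=m$, which is exactly the assertion.

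An equivalent self-contained route avoids probability through a discrete maximum principle, which is closer to the algebraic style of the preceding lemmas. Fixing $m$ and putting $u_l=c_{ml}$, the identity $C(I-P^T)=I$ reads $u_l=\delta_{ml}+\sum_k p_{lk}u_k$, i.e. $(I-P)u=e_m$, so $u_l=\sum_k p_{lk}u_k$ for every $l\ne m$. If the maximum of $u$ were attained at some $l^\ast\ne m$, then $\sum_k p_{l^\ast k}=1$ and $u_k$ would equal that maximum on the whole support of $p_{l^\ast\cdot}$; iterating produces a nonempty set, closed under $P$ and avoiding $m$, on which $P$ is stochastic, so $I-P$ would admit the eigenvector $\mathbf 1$ there and be singular.

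The one delicate point in either version is precisely this degeneracy: ruling out a closed recurrent class without defect that misses $m$. This is where invertibility of $I-P$ is indispensable — equivalently, the convergence of the Neumann series, equivalently the fact that absorption has positive probability from every state, which in the probabilistic argument is encoded painlessly in the bound $\mathbf P_l(\tau_m<\infty)\le1$. I therefore expect the main obstacle to be bookkeeping rather than conceptual, and I would present the first-passage computation as the primary argument, since it disposes of the degenerate case automatically.
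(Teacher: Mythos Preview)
Your argument is correct and complete. The one point that deserves a word is the convergence of the Neumann series: you invoke ``$\sum_l p_{kl}\le1$ and $I-P$ invertible''; this does imply spectral radius strictly below $1$, since for a nonnegative matrix the spectral radius is itself an eigenvalue (Perron--Frobenius, no irreducibility needed), and here it is at most $1$ and cannot equal $1$. With that in hand, the first-passage identity $c_{ml}=\mathbf P_l(\tau_m<\infty)\,c_{mm}$ is standard and gives the inequality at once; your maximum-principle variant is also sound, once one notes that a $P$-closed stochastic class $S$ makes $I-P$ block lower-triangular with singular top block $I-P_{SS}$.

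The paper takes a completely different, purely algebraic route. It writes $c_{ml}=(-1)^{m+l}M_{lm}/\det(I-P^T)$ via cofactors, checks $\det(I-P^T)>0$ by a homotopy argument, and then reduces $(-1)^{m+l}M_{lm}\le M_{mm}$ to the adjacent case $l=m+1$ by row/column permutations; in that case $M_{mm}+M_{m+1,m}$ is identified as the determinant of $\tilde I-\tilde P$ for a new substochastic matrix $\tilde P$ obtained by merging two rows of $P^T$, hence nonnegative. Your approach is shorter and more conceptual, and it yields the sharper identity $c_{ml}=\mathbf P_l(\tau_m<\infty)\,c_{mm}$, which also characterizes the equality cases. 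The paper's approach, by contrast, stays entirely within the determinantal framework of the surrounding Lemmas~\ref{le:adj}--\ref{le:tozd} and avoids introducing any probabilistic interpretation.
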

Из  леммы \ref{le:sush_os'}, \eqref{eq:37} и  \eqref{eq:42} следует, что
 если $\rho_m$ максимально, то
  полуось $x_m$ является существенной, т.е., в случае попадания на неё
  оптимальная траектория будет находиться там некоторое время.

\begin{remark}
 Из  теоремы \ref{the:H} следует, что $\theta^\ast>0$ покомпонентно и,
как следствие, нижняя граница в \eqref{eq:11} и \eqref{eq:41} положительна для всех $r$
 тогда и только тогда, когда рассматриваемая сеть эргодична.
 Таким образом,  для неэргодической сети Джексона
 на оптимальность импульсов
 $\tilde \theta^{(J)}$ при движении из $O$ в $r$ рассчитывать не
 приходится.
С другой стороны, сделанное наблюдение, что обращение уравнений
Гамильтона приводит к жидкостным уравнениям, остаётся в силе и оценки
\eqref{eq:11} и \eqref{eq:41} могут быть полезны. Например, если
$\theta^\ast<0$ покомпонентно, то жидкостные траектории двойственной
сети убегают на бесконечность, а затраты на переход  исходной сети
  из точки $r$ в
начало координат равны $-\theta^\ast\cdot r$\,.
\end{remark}
\section{Условные законы больших чисел.  Обсуждение.}
\label{sec:ld}

Как отмечено вначале, принцип больших уклонений позволяет получать
логарифмические асимптотики вероятностей редких событий и находить
наиболее вероятные сценарии, реализующие эти события.  
Следующие две леммы иллюстрируют это утверждение.
Напомним, что сеть предполагается эргодической.
\begin{lemma}
  \label{le:traject}
Для произвольного $\delta>0$\,,
\begin{equation*}
  \lim_{\epsilon\to0}\limsup_{n\to\infty}
\mathbf P(\sup_{t\le  T^\ast }\abs{\frac{Q(nt)}{n}- q^\ast (t)}\ge \delta|
\,\abs{\frac{Q(n T^\ast )}{n}-r}\le\epsilon)=0\,.
\end{equation*}
Кроме того,
\begin{equation*}
  \lim_{\epsilon\to0}\limsup_{n\to\infty}\frac{1}{n}\,\ln
\mathbf P(\abs{\frac{Q(n T^\ast )}{n}-r}\le\epsilon)=
\lim_{\epsilon\to0}\liminf_{n\to\infty}\frac{1}{n}\,\ln
\mathbf P(\abs{\frac{Q(n T^\ast )}{n}-r}\le\epsilon)=-\theta^\ast \cdot r\,.
\end{equation*}
\end{lemma}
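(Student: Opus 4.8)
The plan is to derive both displays from the large deviation principle of Theorem~\ref{co:jackson}, the optimality statement of Theorem~\ref{the:optimum}, and the variational lower bound \eqref{eq:41}. Write $C_\epsilon^n=\{\abs{Q(nT^\ast)/n-r}\le\epsilon\}$, and let $\tilde q^\ast$ be the extension of $q^\ast$ to $\R_+$ that coincides with $q^\ast$ on $[0,T^\ast]$ and follows, for $t\ge T^\ast$, the (zero--cost) fluid trajectory of the network issued from $r$. Since the local rate $L$ vanishes along fluid trajectories, $\mathbf I(\tilde q^\ast)=\int_0^{T^\ast}L(q^\ast,\dot q^\ast)\,dt=\theta^\ast\cdot r$ by Theorem~\ref{the:optimum}; moreover $\tilde q^\ast$ is continuous and $\tilde q^\ast(T^\ast)=r$.

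I would prove the second display first. For the lower bound, choose an open Skorokhod neighborhood $U$ of $\tilde q^\ast$ small enough that every $q\in U$ satisfies $\abs{q(T^\ast)-r}<\epsilon$ (possible since $\tilde q^\ast$ is continuous at $T^\ast$); then $\mathbf P(C_\epsilon^n)\ge\mathbf P(Q(n\cdot)/n\in U)$, and the large deviation lower bound gives $\liminf_n n^{-1}\ln\mathbf P(C_\epsilon^n)\ge-\inf_U\mathbf I\ge-\mathbf I(\tilde q^\ast)=-\theta^\ast\cdot r$, valid for every $\epsilon>0$. For the upper bound, apply the large deviation upper bound to a closed set containing $C_\epsilon$; since $\mathbf I$ is good, the resulting bound $-\inf\mathbf I$ tends, as $\epsilon\downarrow0$, to $-\inf\{\mathbf I(q):q(0)=0,\,q(T^\ast)=r\}$, which equals $-\theta^\ast\cdot r$ because \eqref{eq:41} forces $\mathbf I(q)\ge\theta^\ast\cdot r$ on this set while $\tilde q^\ast$ attains the value. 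Hence both one--sided limits in the second display equal $-\theta^\ast\cdot r$.

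For the first display I would use a conditioning (concentration) argument. Fix $\delta>0$, let $d$ metrize the Skorokhod topology on $\bbD([0,T^\ast],\R_+^K)$, and set $B_\delta^n=\{\sup_{t\le T^\ast}\abs{Q(nt)/n-q^\ast(t)}\ge\delta\}$, so that the conditional probability is $\mathbf P(B_\delta^n\cap C_\epsilon^n)/\mathbf P(C_\epsilon^n)$. Because $q^\ast$ is continuous, $B_\delta^n$ is contained in $\{d(Q(n\cdot)/n,q^\ast)\ge\delta'\}$ for some $\delta'>0$, and $G_{\delta',\epsilon}=\{q:d(q,q^\ast)\ge\delta',\ \abs{q(T^\ast)-r}\le\epsilon\}$ can be taken closed. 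The upper bound gives $\limsup_n n^{-1}\ln\mathbf P(B_\delta^n\cap C_\epsilon^n)\le-\inf_{G_{\delta',\epsilon}}\mathbf I$, while the previous paragraph gives $\liminf_n n^{-1}\ln\mathbf P(C_\epsilon^n)\ge-\theta^\ast\cdot r$; subtracting, $\limsup_n n^{-1}\ln(\text{conditional probability})\le-\inf_{G_{\delta',\epsilon}}\mathbf I+\theta^\ast\cdot r$. By goodness of $\mathbf I$, $\inf_{G_{\delta',\epsilon}}\mathbf I\to\inf_{G_{\delta',0}}\mathbf I$ as $\epsilon\to0$, where $G_{\delta',0}=\{q:q(0)=0,\,q(T^\ast)=r,\,d(q,q^\ast)\ge\delta'\}$. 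Everything now hinges on the strict gap $\inf_{G_{\delta',0}}\mathbf I>\theta^\ast\cdot r$: if it held with equality, compactness of the level set $\{\mathbf I\le\theta^\ast\cdot r+1\}$ would produce a minimizer $q_\delta\ne q^\ast$ of $\int_0^{T^\ast}L(q,\dot q)\,dt$ over paths from $0$ to $r$, contradicting the \emph{uniqueness} of $q^\ast$. Granting uniqueness, $\inf_{G_{\delta',0}}\mathbf I=\theta^\ast\cdot r+2\eta$ for some $\eta>0$, so for all small $\epsilon$ the conditional probability is at most $e^{-n\eta}$ for large $n$, and letting $\epsilon\to0$ yields the first display.

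The main obstacle is precisely this uniqueness of the extremal on $[0,T^\ast]$, which I would extract from the equality case of \eqref{eq:41}. Equality $\int_0^{T^\ast}L(q,\dot q)\,dt=\theta^\ast\cdot r$ forces, for almost every $t$ with $q(t)\in F_J$, that $\dot q$ be extremal for the supporting hyperplane $\tilde\theta^{(J)}$, i.e.\ $\dot q_k=0$ for $k\in J$ and $\dot q\in\partial H_J(\tilde\theta^{(J)})$; by \eqref{eq:54} and \eqref{eq:61} these conditions determine $\dot q$ on each face $F_J$ as a single vector depending only on $J$. Equivalently, the time reversal $\overline q(t)=q(T^\ast-t)$ must solve the reflected fluid dynamics \eqref{eq:67}--\eqref{eq:68} of the reversed network issued from $r$, a Skorokhod problem with a unique solution; thus $\overline q$, and hence $q^\ast$, is the only minimizer. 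The remaining steps---replacing the uniform deviation $B_\delta^n$ by a $J_1$--closed set and handling the discontinuity of $q\mapsto q(T^\ast)$---are routine thanks to the continuity of $q^\ast$ and $\tilde q^\ast$ at $T^\ast$.
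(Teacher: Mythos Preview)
Your proof is correct and follows the same strategy as the paper: extend $q^\ast$ past $T^\ast$ by a zero-cost fluid trajectory so that $\mathbf I(\tilde q^\ast)=\theta^\ast\cdot r$, bound the denominator from below via the LDP lower bound, bound the numerator from above via the LDP upper bound on a closed set, and invoke the uniqueness of $q^\ast$ to produce the strict gap that forces the conditional probability to decay exponentially. The only differences are organizational: you establish the second display first and sketch the uniqueness argument explicitly (via the reversed-network Skorokhod problem \eqref{eq:67}--\eqref{eq:68}), whereas the paper proves the conditional statement first, cites uniqueness directly from Theorem~\ref{the:optimum}, and then remarks that the second display follows ``essentially'' from the denominator bound \eqref{eq:40}.
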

\begin{proof}
Для доказательства первого утверждения  достаточно доказать, что
  \begin{equation}
    \label{eq:36}
    \limsup_{\epsilon\to0}\limsup_{n\to\infty}\frac{1}{n}\ln
\mathbf P(\sup_{t\le  T^\ast }\abs{\frac{Q(nt)}{n}- q^\ast (t)}\ge \delta|
\abs{\frac{Q(n T^\ast )}{n}-r}\le\epsilon)<0\,.
  \end{equation}
В силу принципа больших уклонений,
\begin{equation}
  \label{eq:40}
    \liminf_{n\to\infty}\frac{1}{n}\ln
\mathbf P(\abs{\frac{Q(n T^\ast )}{n}-r}\le\epsilon)\ge
-\inf_{q:\,\abs{q( T^\ast )-r}\le\epsilon/2}\mathbf
I(q)\,.
\end{equation}
Продолжим $ q^\ast $ на интервал времени $( T^\ast ,\infty)$
по закону больших чисел. Аналогично \eqref{eq:33}, 
$\int_{ T^\ast }^\infty L( q^\ast (t),\dot{ q}^\ast(t))\,dt=0$\,. Поэтому
 $\mathbf I( q^\ast )=\int_0^{ T^\ast }L( q^\ast (t),\dot{ q}^\ast(t))\,dt$\,.
Так как функция $\mathbf I(q)$ компактна снизу и $ q^\ast ( T^\ast )=r$\,, то 
\begin{equation}
  \label{eq:38}
  \lim_{\epsilon\to0}\inf_{q:\,\abs{q( T^\ast )-r}\le\epsilon/2}\mathbf
I(q)=\inf_{q:\,q( T^\ast )=r}\mathbf
I(q)\le \mathbf I( q^\ast )=\int_0^{ T^\ast }L( q^\ast (t),\dot{ q}^\ast(t))\,dt\,.
\end{equation}
Аналогично,
\begin{equation*}
\limsup_{\epsilon\to0}
\limsup_{n\to\infty}\frac{1}{n}\ln
  \mathbf P(\sup_{t\le  T^\ast }\abs{\frac{Q(nt)}{n}- q^\ast (t)}\ge \delta\,,\;
\abs{\frac{Q(n T^\ast )}{n}-r}\le\epsilon)\le
-\inf_{\substack{q:\,\sup_{t\le  T^\ast }\abs{q(t)- q^\ast (t)}\ge \delta\,,\\
q( T^\ast )=r}}\mathbf I(q)\,.
\end{equation*}
Пусть последний $\inf$ достигается в точке $\tilde
q$\,. Так как $\inf_{q,\,T:\,q(T)=r}\int_0^{T}L(q(t),\dot
q(t))\,dt$ 
достигается на
единственной траектории $ q^\ast $
 и $\tilde q\not= q^\ast $\,, то 
$\mathbf I(\tilde q)\ge \int_0^{ T^\ast }L(\tilde q(t),\dot
{\tilde q}(t))\,dt>\int_0^{ T^\ast }L( q^\ast (t),\dot{ q}^\ast(t))\,dt$\,.
Таким образом,
\begin{equation*}
  \limsup_{\epsilon\to0}
\limsup_{n\to\infty}\frac{1}{n}\ln
  \mathbf P(\sup_{t\le  T^\ast }\abs{\frac{Q(nt)}{n}- q^\ast (t)}\ge \delta\,,\;
\abs{\frac{Q(n T^\ast )}{n}-r}\le\epsilon)
<-\int_0^{ T^\ast }L( q^\ast (t),\dot{ q}^\ast(t))\,dt\,.
\end{equation*}
Вспоминая \eqref{eq:40} и \eqref{eq:38}, получаем \eqref{eq:36}.
Второе утверждение леммы следует, по существу, из \eqref{eq:40}.
\end{proof}
В качестве ещё одного применения рассмотрим момент достижения
суммарной очередью большого значения:
$\tau_n=\inf\{t:\,\sum_{k=1}^KQ_k(nt)\ge nA\}$\,.
Определим $r$ как точку минимума $\theta^\ast \cdot x$ по всем
$x=(x_1,\ldots,x_K)^T\in\R_+^K$\,, 
таким что $\sum_{k=1}^K x_k=A$\,. ''В общем положении'' такая точка
$r$ определяется единственным образом.
\begin{lemma}
  \label{le:long_time}
 Для всех
достаточно больших $T$
\begin{equation}
  \label{eq:22}
 \lim_{n\to\infty}\frac{1}{n}\,
\ln\mathbf P( \tau_n\le T)=
 -\theta^\ast\cdot r
\end{equation}
и
\begin{equation}
  \label{eq:16}
  \lim_{n\to\infty}\mathbf P(\sup_{(\tau_n-T^\ast)^+\le t\le \tau_n}\abs{\frac{Q(nt)}{n}-
    q^\ast(t-(\tau_n-T^\ast)^+)}<\delta| \tau_n\le T)=1\,.
\end{equation}
\end{lemma}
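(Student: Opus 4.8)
The plan is to deduce \eqref{eq:22} from the large deviation principle of Theorem~\ref{co:jackson} together with the a priori bound $\int_0^tL(q(s),\dot q(s))\,ds\ge\theta^\ast\cdot q(t)$ obtained in \eqref{eq:41}, and then to bootstrap \eqref{eq:16} from \eqref{eq:22} by a strict‑optimality argument. Throughout I use that $\theta^\ast>0$ coordinatewise (a consequence of Theorem~\ref{the:H}, since $e^{-\theta^{(m)}_m}=\rho_m<1$) and that $r$ is the cheapest terminal configuration, $\theta^\ast\cdot r=\min\{\theta^\ast\cdot x:\,x\ge0,\ \sum_{k=1}^Kx_k=A\}$, the minimum being attained at the single vertex $r=Ae_{k^\ast}$ with $\rho_{k^\ast}=\max_m\rho_m$, in agreement with the discussion around Lemma~\ref{le:sush_os'}.

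For the upper estimate in \eqref{eq:22}, note that $\{\tau_n\le T\}=\{\sup_{t\le T}\sum_{k=1}^KQ_k(nt)/n\ge A\}$, so the trajectory set $\Phi_T=\{q:\,q(0)=0,\ \sup_{t\le T}\sum_kq_k(t)\ge A\}$ is closed in $\bbD(\R_+,\R_+^K)$ and Theorem~\ref{co:jackson} gives $\limsup_n\frac1n\ln\mathbf P(\tau_n\le T)\le-\inf_{\Phi_T}\mathbf I$. Any $q\in\Phi_T$ with $\mathbf I(q)<\infty$ is absolutely continuous, so at the first‑passage instant $\sigma=\inf\{t:\sum_kq_k(t)\ge A\}\le T$ one has $\sum_kq_k(\sigma)=A$; running \eqref{eq:41} up to $\sigma$ yields $\mathbf I(q)\ge\int_0^\sigma L\ge\theta^\ast\cdot q(\sigma)\ge\theta^\ast\cdot r$, whence $\inf_{\Phi_T}\mathbf I\ge\theta^\ast\cdot r$. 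For the matching lower estimate I fix $\eta>0$ and set $r_\eta=(1+\eta)r$, so $\sum_k(r_\eta)_k=(1+\eta)A>A$ and $\theta^\ast\cdot r_\eta=(1+\eta)\theta^\ast\cdot r$. By Theorem~\ref{the:optimum} applied with terminal point $r_\eta$ there is an optimal trajectory reaching $r_\eta$ in finite time, which is $\le T$ once $T$ is large; the open set $G=\{q:\sup_{t\le T}\sum_kq_k(t)>A\}$ satisfies $\{Q(n\cdot)/n\in G\}\subseteq\{\tau_n\le T\}$ and contains a neighbourhood of that trajectory, so the lower bound of Theorem~\ref{co:jackson} gives $\liminf_n\frac1n\ln\mathbf P(\tau_n\le T)\ge-\inf_G\mathbf I\ge-(1+\eta)\theta^\ast\cdot r$. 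Letting $\eta\to0$ establishes \eqref{eq:22}. (Equivalently, one may apply Lemma~\ref{le:traject} at the point $r_\eta$.)

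To prove \eqref{eq:16}, let $B_n$ denote the deviation event $\{\sup_{(\tau_n-T^\ast)^+\le t\le\tau_n}|Q(nt)/n-q^\ast(t-(\tau_n-T^\ast)^+)|\ge\delta\}$; by \eqref{eq:22} it suffices to show $\limsup_n\frac1n\ln\mathbf P(B_n,\tau_n\le T)<-\theta^\ast\cdot r$, for then $\mathbf P(B_n\mid\tau_n\le T)\to0$. To circumvent the discontinuity of the first‑passage time I would cover $[0,T]$ by finitely many short intervals $[a_i,a_{i+1}]$; on $\{\tau_n\in[a_i,a_{i+1}]\}$ the random window $[(\tau_n-T^\ast)^+,\tau_n]$ lies in the fixed interval $[(a_i-T^\ast)^+,a_{i+1}]$, each corresponding trajectory set $\Psi_i$ (of $q\in\Phi_T$ overflowing in $[a_i,a_{i+1}]$ whose buildup over the terminal window stays at distance $\ge\delta$ from $q^\ast$) is closed, and a union bound reduces matters to the uniform strict separation $\inf_{\Psi_i}\mathbf I>\theta^\ast\cdot r$.

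This strict separation is the crux. Were it to fail, compactness of the level sets $\{\mathbf I\le u\}$ (Theorem~\ref{co:jackson}) would produce a limit $\tilde q\in\Psi_i$ with $\mathbf I(\tilde q)\le\theta^\ast\cdot r$; running \eqref{eq:41} up to its first‑passage time $\tilde\sigma$ forces the chain $\theta^\ast\cdot r\ge\mathbf I(\tilde q)\ge\int_0^{\tilde\sigma}L(\tilde q,\dot{\tilde q})\,ds\ge\theta^\ast\cdot\tilde q(\tilde\sigma)\ge\theta^\ast\cdot r$ to be a string of equalities. Since $r$ is the unique minimizer this gives $\tilde q(\tilde\sigma)=r$, and the equality $\int_0^{\tilde\sigma}L=\theta^\ast\cdot r$ makes $\tilde q$ an optimal path from $0$ to $r$. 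By the uniqueness of that path (Theorem~\ref{the:optimum}: a free sojourn at the origin followed by the deterministic buildup $q^\ast$, which reaches $r$ in time exactly $T^\ast$) we would have $\tilde\sigma\ge T^\ast$ and $\tilde q(t)=q^\ast(t-(\tilde\sigma-T^\ast))$ on the terminal window, so its deviation from $q^\ast$ is $0$, contradicting $\tilde q\in\Psi_i$. Hence $\inf_{\Psi_i}\mathbf I\ge\theta^\ast\cdot r+c$ for some $c>0$, so $\mathbf P(B_n,\tau_n\le T)\le\exp(-n(\theta^\ast\cdot r+c+o(1)))$; dividing by $\mathbf P(\tau_n\le T)=\exp(-n(\theta^\ast\cdot r+o(1)))$ yields \eqref{eq:16}. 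The principal obstacles are exactly the two uniqueness inputs — of the overflow vertex $r$ and of the optimal trajectory $q^\ast$ — and the topological check that $\Phi_T$, $G$ and the $\Psi_i$ are respectively closed, open and closed in the Skorokhod topology despite being defined through first‑passage and supremum functionals; both are handled using the absolute continuity of finite‑rate trajectories and the semicontinuity of these functionals.
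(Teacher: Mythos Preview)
Your proposal is correct and follows essentially the same strategy as the paper: both derive \eqref{eq:22} by combining the LDP of Theorem~\ref{co:jackson} with the a~priori lower bound \eqref{eq:41} for the upper estimate and with the perturbed optimal trajectory to $(1+\eta)r$ for the lower estimate, and both obtain \eqref{eq:16} by showing that the infimum of $\mathbf I$ over the ``bad'' set strictly exceeds $\theta^\ast\cdot r$ via the uniqueness of the optimal path from Theorem~\ref{the:optimum}. The only notable difference is in how the random window $[(\tau_n-T^\ast)^+,\tau_n]$ is handled: the paper works directly with the set $\{q:\sup_{(\tau(q)-T^\ast)^+\le t\le\tau(q)}|q(t)-q^\ast(t-(\tau(q)-T^\ast)^+)|\ge\delta,\ \tau(q)\le T\}$ and disposes of the closedness issue by noting that its closure in $\bbD$ can add only discontinuous trajectories (on which $\mathbf I=\infty$), whereas you discretize $[0,T]$ into short subintervals to freeze the window---a more pedestrian but equally valid device that trades the paper's semicontinuity argument for a finite union bound.
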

\begin{proof}
  Рассуждения аналогичны тем, которые использовались при
доказательстве леммы \ref{le:traject}. 
Покажем, что для достаточно больших $T$
\begin{equation}
  \label{eq:51}
  \inf_{q:\,\sup_{t\le T}\sum_{k=1}^K q_k(t)> A}\mathbf
  I(q)=\inf_{q:\,\sup_{t\le T}\sum_{k=1}^K q_k(t)\ge A}\mathbf
  I(q)=\theta^\ast\cdot r\,.
\end{equation}
Для $\epsilon>0$ обозначим через $q^{\epsilon,\ast}$ оптимальную
траекторию для попадания из начала координат в точку
$r(1+\epsilon)$\,, построенную в доказательстве теоремы
\ref{the:optimum}.
 Предполагается, что она продолжена в соответствии с
законом больших чисел после попадания в $r(1+\epsilon)$\,. 
Поскольку $q^{\epsilon,\ast}$ до момента попадания в $r(1+\epsilon)$ -- это  обращённая жидкостная
траектория двойственной системы, 
то существует $\hat T>0$ такое, что все 
траектории $q^{\epsilon,\ast}$, соответствующие $\epsilon\in[0,1]$\,, попадают в точку
назначения к моменту $\hat T$\,, см., напр. лемму 5.4 на с.143 в
Bramson \cite{Bra06}.
Мы имеем, что для $T\ge \hat T$
\begin{multline*}
  \theta^\ast\cdot r=\mathbf I(q^\ast)
=\inf_{q:\,\sum_{k=1}^K q_k(t)\ge A\text{ для
      некоторого } t>0}\mathbf
  I(q)
\le \inf_{q:\,\sup_{t\le  T}\sum_{k=1}^K q_k(t)\ge A}\mathbf
  I(q)\\\le \inf_{q:\,\sup_{t\le  T}\sum_{k=1}^K q_k(t)> A}\mathbf
  I(q)\le \inf_{q:\,\sup_{t\le  T}\sum_{k=1}^K q_k(t)\ge A(1+\epsilon)}\mathbf
  I(q)\le \mathbf
  I(q^{\epsilon,\ast})=\theta^\ast\cdot r(1+\epsilon)\,.
\end{multline*}
Устремляя $\epsilon$ к нулю, получаем \eqref{eq:51}.
Так как,
 в силу принципа больших уклонений,
\begin{align*}
      \limsup_{n\to\infty}\frac{1}{n}\,
\ln\mathbf P( \tau_n\le T)=\limsup_{n\to\infty}\frac{1}{n}\,
\ln\mathbf P(\sup_{t\le T}\frac{Q(nt)}{n}\,\ge A)\le
-\inf_{q:\,\sup_{t\le T}\sum_{k=1}^K q_k(t)\ge A}\mathbf
  I(q),\\
\liminf_{n\to\infty}\frac{1}{n}\,
\ln\mathbf P( \tau_n\le T)=\liminf_{n\to\infty}\frac{1}{n}\,
\ln\mathbf P(\sup_{t\le T}\frac{Q(nt)}{n}\,\ge A)\ge
-\inf_{q:\,\sup_{t\le T}\sum_{k=1}^K q_k(t)> A}\mathbf
  I(q)\,,
\end{align*} 
то \eqref{eq:22} вытекает из \eqref{eq:51}.

 Для доказательства \eqref{eq:16} 
обозначим $\tau(q)=\inf\{t:\,\sum_{k=1}^Kq_k(t)\ge A\}$\,, где 
$q=(q(t)\,,t\ge0)\in\mathbb D(\R_+,\R_+^K)$\,. Так как множество
$\{q\in\mathbb D(\R_+,\R_+^K):\,\sup_{(\tau(q)-T^\ast)^+\le t\le
  \tau(q)}
\abs{q(t)- q^\ast (t-(\tau(q)-T^\ast)^+)}
\ge \delta,\,\sup_{t\le
  T}\sum_{k=1}^K q_k(t)\ge A\}$ содержит те свои предельные точки,
которые являются непрерывными функциями, и так как $\mathbf
I(q)=\infty$\,, если $q$ разрывна, то
\begin{align*}
 \limsup_{n\to\infty}\frac{1}{n}\,
\ln\mathbf P(\sup_{(\tau_n-T^\ast)^+\le t\le \tau_n}\abs{\frac{Q(nt)}{n}-
    q^\ast(t-(\tau_n-T^\ast)^+)}\ge \delta,\, \tau_n\le T)\\\le
-\inf_{\substack{q:\,\sup_{(\tau(q)-T^\ast)^+\le 
t\le \tau(q)}\abs{q(t)- q^\ast (t-(\tau(q)-T^\ast)^+)}\ge
  \delta,\\
 \sum_{k=1}^K q_k(t)\ge A\text{ для некоторого }t\in[0,T]}}\mathbf
  I(q)\,.
\end{align*}
Так как множество, по которому берётся последний $\inf$\,, 
замкнуто, то эта нижняя грань
 достигается некоторой функцией
 $\breve
q$\,. 
Как мы видели, если $\inf_{(q,T'):\,\sum_{k=1}^K q_k(T')\ge
  A}\int_0^{T'}L(q(t),\dot q(t))\,dt$ 
достигается в  точке $(q,T')$, то  $T'\ge T^\ast $ и
$q(T'-t)=q^\ast(T^\ast -t)$ при $0\le t\le T^\ast$ (как следствие, 
$\tau(q)=T'$). Следовательно, если  $\tau(\breve q)<T^\ast$, то пара $(\breve
q,\tau(\breve q))$ не достигает указанной нижней грани. Педположим, что
$\tau(\breve q)\ge T^\ast$. Тогда существует момент $t\in[\tau(\breve
q)-T^\ast,\tau(\breve q)]$, такой что  $\breve q(t)- q^*(t-\tau(\breve
q)+T^\ast)\ge\delta$, т.е. $(\breve q,\tau(\breve q))$ также не
достигает указанной нижней грани.
Таким образом, 
$\mathbf I(\breve q)\ge \int_0^{ \breve q}L(\breve q(t),
\dot{\breve q}(t))\,dt>\int_0^{T^\ast}L(q^\ast(t),\dot{
  q}^\ast(t))\,dt=
\theta^\ast\cdot r$\,. 
Поэтому
\begin{equation*}
      \limsup_{n\to\infty}\frac{1}{n}\,
\ln\mathbf P(\sup_{(\tau_n-T^\ast)^+\le t\le \tau_n}\abs{\frac{Q(nt)}{n}-
    q^\ast(t-(\tau_n-T^\ast)^+)}\ge \delta,\, \tau_n\le T)<-\theta^\ast\cdot r\,.
\end{equation*}
Сопоставляя это неравенство с \eqref{eq:22}, получаем, что для всех
достаточно больших $T$
\begin{equation*}
    \limsup_{n\to\infty}\frac{1}{n}\,
\ln\mathbf P(\sup_{(\tau_n-T^\ast)^+\le t\le \tau_n}\abs{\frac{Q(nt)}{n}-
    q^\ast(t-(\tau_n-T^\ast)^+)}\ge \delta| \tau_n\le T)<0\,,
\end{equation*}
что влечёт за собой \eqref{eq:16}.
\end{proof}
Есть примечательное объяснение тому, что $ q^\ast $ получается обращением
времeни из $\overline q$\,.
Как и в Ananatharam, Heidelberger,  Tsoukas \cite{AnaHeiTso90},
см. также Shwartz, Weiss\cite{ShwWei93}, 
заметим, что 
\begin{multline*}
\mathbf P(\sup_{t\le  T^\ast }\abs{\frac{Q(nt)}{n}- q^\ast (t)}\ge \delta|
\,\abs{\frac{Q(n T^\ast )}{n}-r}\le\epsilon)=\\=
\mathbf P(\sup_{t\le  T^\ast }\abs{\frac{
    Q(n( T^\ast -t))}{n}-  q^\ast ( T^\ast -t)}\ge \delta|
\,\abs{\frac{ Q(n T^\ast )}{n}-r}\le\epsilon)=  \\
=\mathbf P(\sup_{t\le  T^\ast }\abs{\frac{
\overline    Q^n(nt)}{n}-\overline q(t)}\ge \delta|
\,\abs{\frac{\overline Q^n(0)}{n}-r}\le\epsilon) \,,
\end{multline*}
где $\overline    Q^n(t)=Q(n T^\ast -t)$\,.
Если процесс $Q(t)$ является стационарным, то процесс $\overline
Q^n$ также является стационарным и представляет собой процесс длин
очередей в обращённой по времени сети Джексона. В частности, его
распределение не зависит от $n$\,. Так как $\overline q(t)$ является
жидкостным пределом эргодической сети Джексона, то стандартными
методами можно показать, что последняя вероятность сходится к нулю при
$n\to\infty $ и $\epsilon\to0$\,. 
Похожие рассуждения можно найти в
Majewski, Ramanan \cite{MajRam08}, а также в Collingwood
\cite{Col15}, где стационарность также играет ключевую роль.
(Заметим, что ни одна из этих работ не опубликована в рецензируемых журналах.)

Так как $\theta^\ast \cdot r=\inf_{\substack{q:\,q(0)=0\,, q(T)=r\\ \text{ для
    некоторого } T}}\mathbf I(q)$\,, то, в соответствии с общей
теорией, см., Вентцель, Фрейдлин \cite{wf-r}, Shwartz, Weiss
\cite{SchWei95}, а также исходя из второго утверждения леммы \ref{le:traject}, можно предположить следующую асимптотику стационарного
распределения $Q(t)$\,:
\begin{equation*}
  \lim_{\epsilon\to0}\limsup_{n\to\infty}\frac{1}{n}\,\ln \mathbf 
P(\abs{\frac{Q(nt)}{n}\,-r}<\epsilon)=
  \lim_{\epsilon\to0}\liminf_{n\to\infty}\frac{1}{n}\,\ln \mathbf
  P(\abs{\frac{Q(nt)}{n}\,-r}<\epsilon)=-\theta^\ast  \cdot r\,.
\end{equation*} В
данном случае это свойство, очевидно, выполнено, т.к. стационарное
распределение $Q(t)$ известно явно:
$\mathbf P(Q(t)=(i_1,\ldots,i_K))=\prod_{k=1}^K
(1-\rho_k)\rho_k^{i_k}$\,, см., напр., Клейнрок \cite{Kle79-r}.
\section{Пример:  эргодическая сеть из двух узлов}
\label{sec::--primer}

Проиллюстрируем общие результаты на примере эргодической сети из двух узлов. В этом
случае анализ удаётся сделать более наглядным.
Имеем, что \begin{align}
  \label{eq:15'}
  h_1(\theta)&=e^{-\theta_1}\bl((e^{\theta_1}-1)p_{11}+(e^{\theta_2}-1)p_{12}+1\br)-1\,,\\\label{eq:15a}
  h_2(\theta)&=e^{-\theta_2}\bl((e^{\theta_1}-1)p_{21}+(e^{\theta_2}-1)p_{22}+1\br)-1\,\,,\\
\notag  H_1(\theta)&=(e^{\theta_1}-1)\lambda_1+(e^{\theta_2}-1)\lambda_2+
h_1(\theta)^+\mu_1+h_2(\theta)\mu_2,\\
\label{eq:15b}
  H_2(\theta)&=(e^{\theta_1}-1)\lambda_1+(e^{\theta_2}-1)\lambda_2+
h_1(\theta)\mu_1+h_2(\theta)^+\mu_2,
\\\notag
 H_0(\theta)&=(e^{\theta_1}-1)\lambda_1+(e^{\theta_2}-1)\lambda_2+h_1(\theta)\mu_1
+h_2(\theta)\mu_2\,.
\end{align}
Заметим, что  $h_1(\theta)=0$ тогда и только тогда, когда
$(e^{\theta_1}-1)/(e^{\theta_2}-1)=p_{12}/(1-p_{11})$\,,
т.е. $\theta_1$ и $\theta_2$ имеют один и тот же знак, следовательно,
график $h_1(\theta)=0$ лежит в I--м и III--м квадрантах. Кроме того,
$h_1(\theta)>0$ над графиком и слева от него и 
$h_1(\theta)<0$ под графиком и справа от него. Задавая $\theta_2$ как
функцию $\theta_1$ в первом квадранте уравнением $h_1(\theta)=0$\,,
 имеем, дважды дифференцируя, что $d^2\theta_2/d\theta_1^2=
(1-p_{11})/p_{12}\,e^{\theta_1-\theta_2}-((1-p_{11})/p_{12}\,e^{\theta_1-\theta_2})^2$\,. 
Так как $(1-p_{11})/p_{12}\,e^{\theta_1-\theta_2}=1+
(1-p_{11}-p_{12})/p_{12}e^{-\theta_2}\ge 1$\,, то $\theta_2$ является
строго вогнутой функцией $\theta_1$\,, если $p_{11}+p_{12}<1$ и
является линейной функцией $\theta_1$\,, если  $p_{11}+p_{12}=1$\,.
 Аналогичнo,
 график $h_2(\theta)=0$ лежит в I--м и III--м квадрантах,
$h_2(\theta)<0$ над графиком и слева от него,
$h_2(\theta)>0$ под графиком и справа от него,  $\theta_2$
 является строго выпуклой функцией $\theta_1$ на графике $h_2(\theta)=0$\,,  если $p_{22}+p_{21}<1$ и
является линейной функцией $\theta_1$\,, если  $p_{22}+p_{21}=1$\,.
Так как $(1-p_{11})/p_{12}\ge p_{21}/(1-p_{22})$\,, то в первом
квадранте
график $h_1(\theta)=0$ лежит над графиком $h_2(\theta)=0$ и лежит строго над, если 
$(1-p_{11})/p_{12}> p_{21}/(1-p_{22})$\,.
В третьем квадранте
график $h_1(\theta)=0$ лежит под графиком $h_2(\theta)=0$ и лежит
строго 
под, если 
$(1-p_{11})/p_{12}> p_{21}/(1-p_{22})$\,.

Найдём оптимальное движение из  $(0,0)$ в $(1,0)$ по прямой.
Движение происходит в соответствии с гамильтонианом $H_2(\theta)$ при
дополнительном условии $q_2(t)=0$\,. Тогда, по лемме \ref{le:lagrange}, выбирая
$g(q)=q_2^2$\,, так что $\gamma(t)=0$\,, имеем, что
$(\dot q_1(t),0)=(\dot q_1(t),\dot q_2(t))\in\partial
H_2(\theta)$\,. Как следствие, $0 \in\partial_2H_2(\theta)$ и
$\dot q_1(t)\in\partial_1 H_2(\theta)$\,, см., Предложение 2.3.15 на
с.52 в Кларк \cite{Cla83-r}.
Кроме того,  $H_2(\theta)
=0$\,.
 Затраты равны  $1\cdot
\theta_1+0\cdot\theta_2=
\theta_1$\,.
В силу \eqref{eq:15'}, \eqref{eq:15a} и \eqref{eq:15b},
для того, чтобы $\partial_{2}H_2(\theta)\ni0$\,,
 необходимо,
чтобы 
$h_2(\theta)\ge0\,$ (т.к. $\partial_2 h_1(\theta)>0$\,, см., напр., 
\eqref{eq:34}). 

\begin{figure}[h!]
  \centering
  \includegraphics[scale=.7]{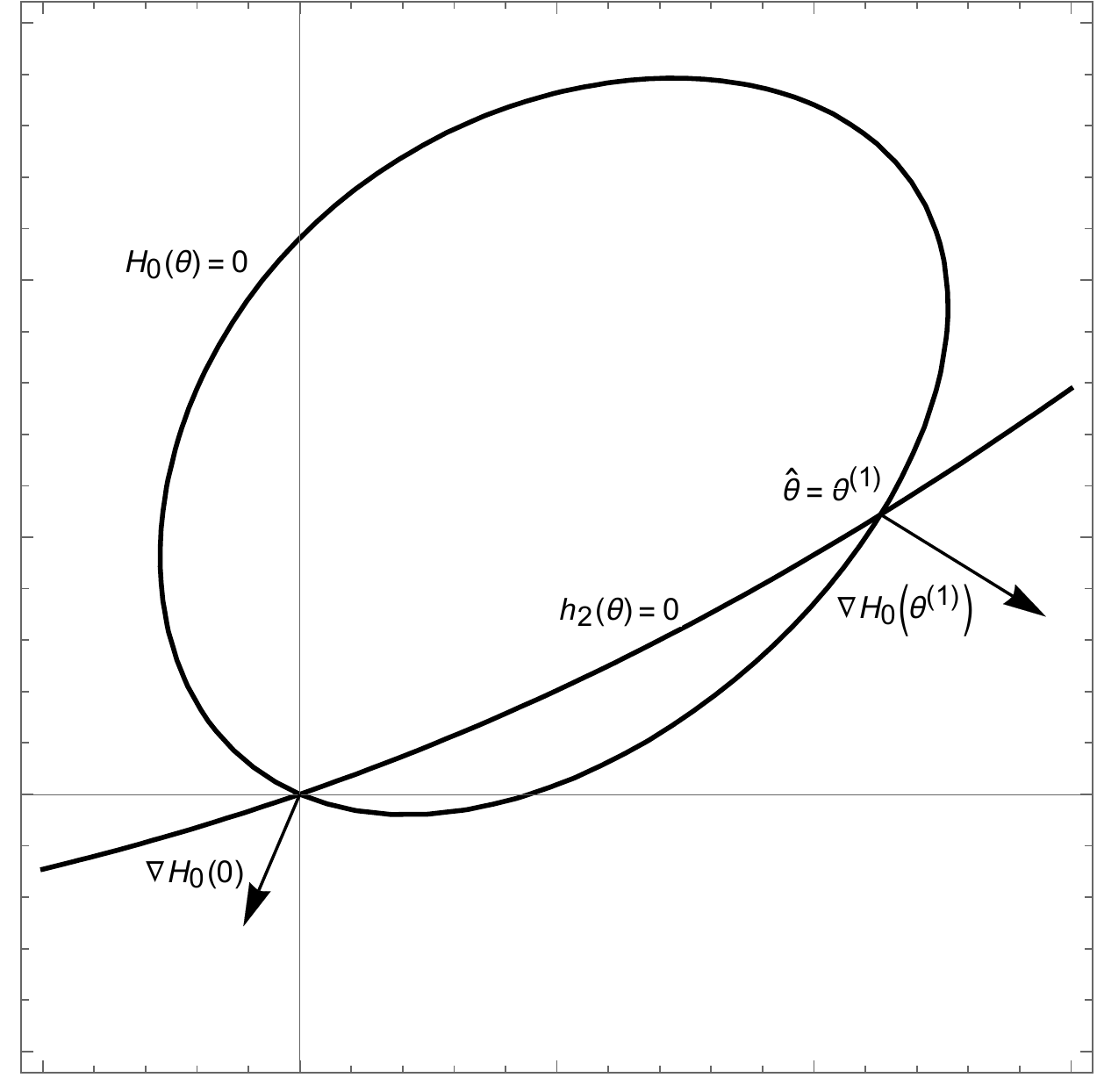}
  \caption{Случай, когда 
$\partial_2 H_0(\theta^{(1)})\le 0$\,}
      \label{fig:5}
\end{figure}
\begin{figure}[h!]
  \centering
  \includegraphics[scale=.7]{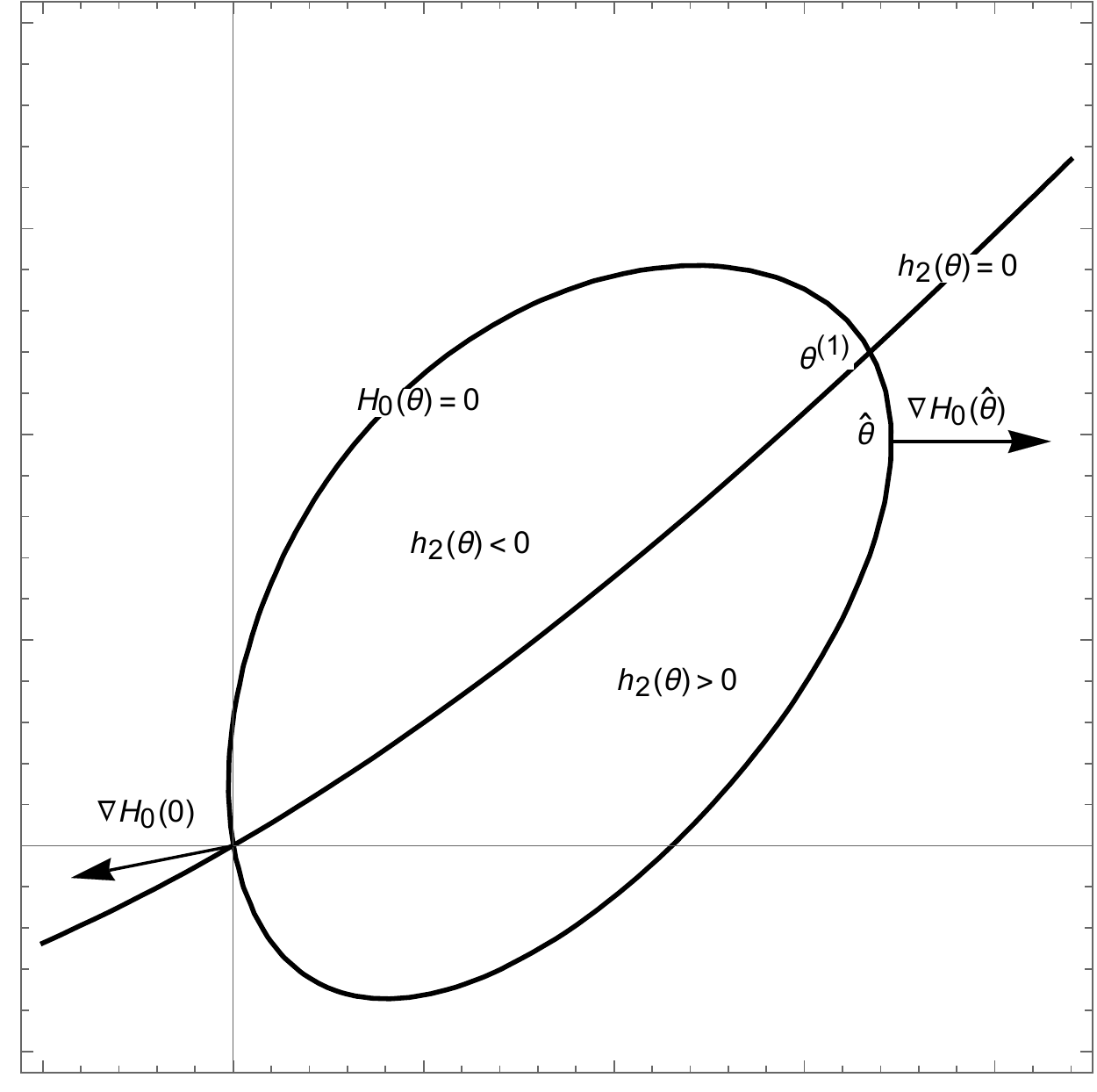}
  \caption{Случай, когда 
$\partial_2 H_0(\theta^{(1)})> 0$\,}
      \label{fig:5}
\end{figure}
Заметим также, что если $h_2(\theta)=0$\,, то
\begin{equation*}
    \partial H_2(\theta)=
    \Big\{
  \Bl(\begin{array}[c]{c}
    e^{\theta_1}\lambda_1+\partial_1h_1(\theta)\mu_1
+\alpha\partial_1h_2(\theta)\mu_2\\e^{\theta_2}\lambda_2+
\partial_2h_1(\theta)\mu_1 +\alpha\partial_2h_2(\theta)\mu_2
  \end{array}\Br)\,,\alpha\in[0,1]\Big\}\,.
\end{equation*}
Таким образом,  если точка $\hat\theta$ такова, что $H_0(\hat\theta)=0$\,,
$h_2(\hat\theta)=0$\,, т.е., $\hat\theta=\theta^{(1)}$\,, и
$\partial_2 H_0(\theta^{(1)})\le 0$\,,
 то эта точка оптимальна. Если $h_2(\hat\theta)>0$\,, $H_0(\hat\theta)=0$
  и $\partial_2 H_0(\hat\theta)=0$\,, то $\hat\theta$ также будет
  оптимальной. Как можно видеть, эти две возможности исключают друг друга.
Они иллюстрируются  рис. 1 и рис. 2, соответственно. (Вектор $\nabla
H_0(0)$ изображен по той причине, что принадлежность этого вектора
третьему квадранту является достаточным условием  эргодичности сети.)

Рассмотрим оптимальные пути попадания из $(0,0)$ в $r=(r_1,r_2)$\,.  
Сравним  путь по прямой и путь горизонтально  и затем по прямой.
Покажем, что в случае, когда движение сначала происходит горизонтально
по некоторому вектору $s=(s_1,0)$ с
импульсом $\hat\theta=(\hat\theta_1,\hat\theta_2)$  таким, что 
 $\partial_2H_0(\hat\theta)=0$ и $h_2(\hat\theta)>0$\,, как на рис.2, то непосредственное 
 движение по вектору $r$
 не хуже. Рис. 3 иллюстрирует приводимые рассуждения.
\begin{figure}[h!]
  \centering
  \includegraphics[scale=.7]{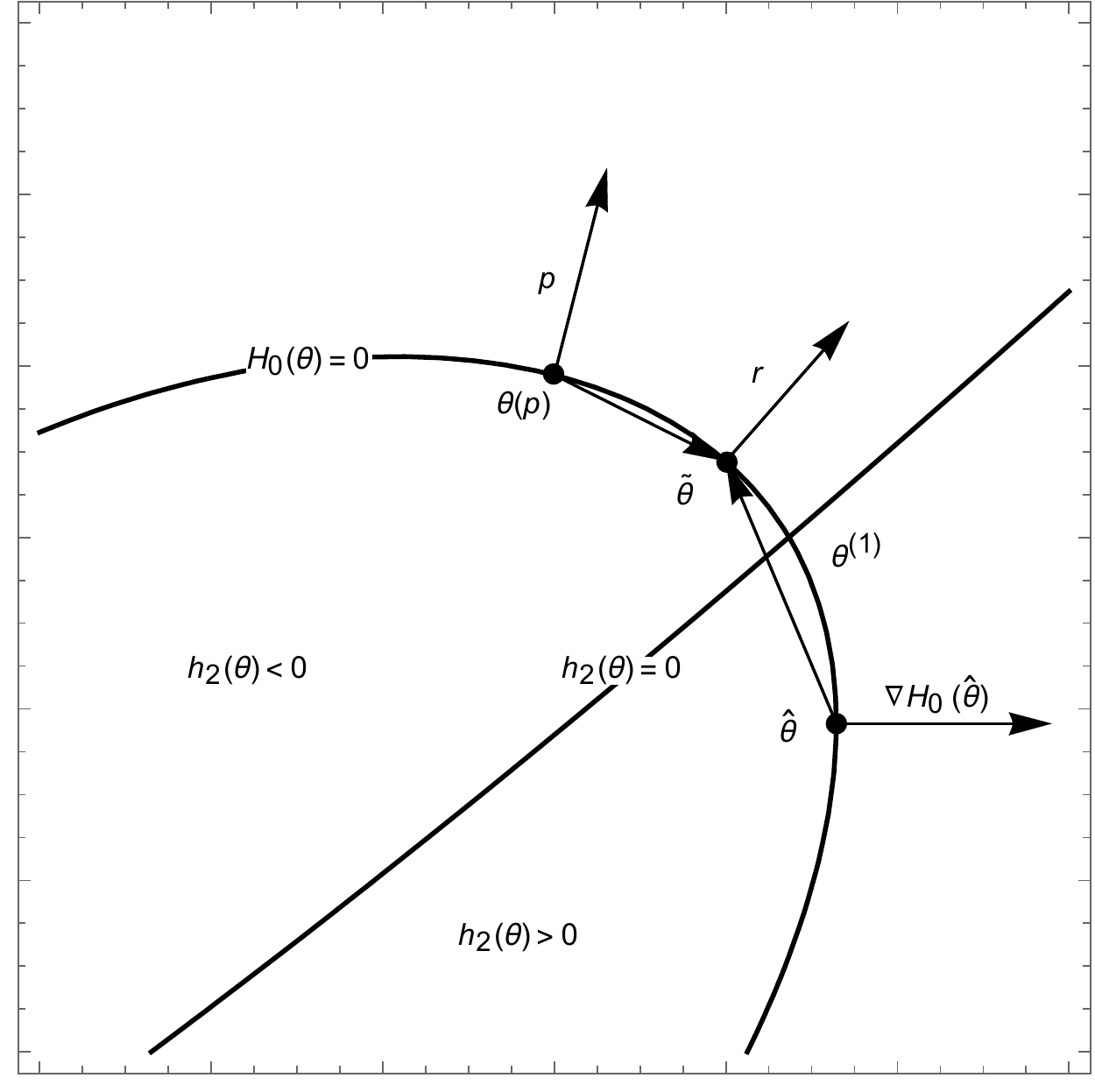}
  \caption{Предпочтительность  движения по вектору $r$}
      \label{fig:5}
\end{figure}
 Затраты на движение
 сначала по вектору $s$ 
и затем по вектору $p=r-s$  равны $\hat\theta_1s_1+
\theta(p) \cdot p$\,, где через $\theta(p)$ обозначена точка на кривой
$H_0(\theta)=0$\,, в которой внешняя нормаль совпадает с вектором
$p$\,. 
Обозначим также через $\tilde \theta$ точку на кривой $H_0(\theta)=0$ с внешней
нормалью $r$\,. Если $r$ не лежит на оси абсцисс, то $\tilde
\theta\not=\hat\theta$\,, поскольку движение с $\hat\theta$ горизонтально. 
  Затраты на движение сразу по вектору $r$ равны
$ \tilde\theta \cdot r= \tilde\theta \cdot s+ \tilde\theta \cdot p
$\,.
Так как $s$ ортогонален кривой $H_0(\theta)=0$ в точке $\hat\theta$
и функция $H_0(\theta)$ является строго выпуклой,
то  $ (\tilde\theta-\hat\theta)\cdot s\le0$\,, причем неравенство является
строгим, если $r$ не лежит на оси абсцисс и $s\not=0$\,. Аналогично, так как
 $p$ ортогонален кривой $H_0(\theta)=0$ в точке $\theta(p)$\,, то 
$(\tilde\theta-\theta(p))\cdot p\le0$\,. Таким образом,
$\tilde\theta \cdot r\le \theta \cdot s+
\theta(p)\cdot p$\,, где  неравенство является строгим если $r$ не лежит на
оси абсцисс, т.е.,
 затраты при движении  по негоризонтальному вектору
$r$ меньше затрат при первоначальном движении по $s$\,.

Как и выше,  пусть
$\theta^{(1)}$ -- точка пересечения кривых $H_0(\theta)=0$ и
$h_2(\theta)=0$ и пусть $\theta^{(2)}$ -- точка пересечения кривых
$H_0(\theta)=0$ и $h_1(\theta)=0$\,. 
Как  доказано в теореме \ref{the:H}, прямые $\theta_1=\theta_1^{(1)}$ и 
$\theta_2=\theta_2^{(2)}$ пересекаются в точке $\theta^\ast $\,,
которая лежит на кривой $H_0(\theta)=0$\,. 
 Приведенный выше анализ показывает, что если
$\partial_2H_0(\theta^{(1)})>0$\,, то начальное движение по горизонтали не является
оптимальным. Более того, как видно из дальнейшего, в этом случае движение
вертикально и затем пересечение внутренности квадранта так, чтобы
оказаться в нужном месте оси абсцисс стоит меньше даже в случае,
когда требуется переместиться горизонтально.
  Аналогично, начальное вертикальное движение не является
оптимальным, если $\partial_1H_0(\theta^{(2)})>0$\,.

\begin{figure}[h!]
  \centering
  \includegraphics[scale=.7]{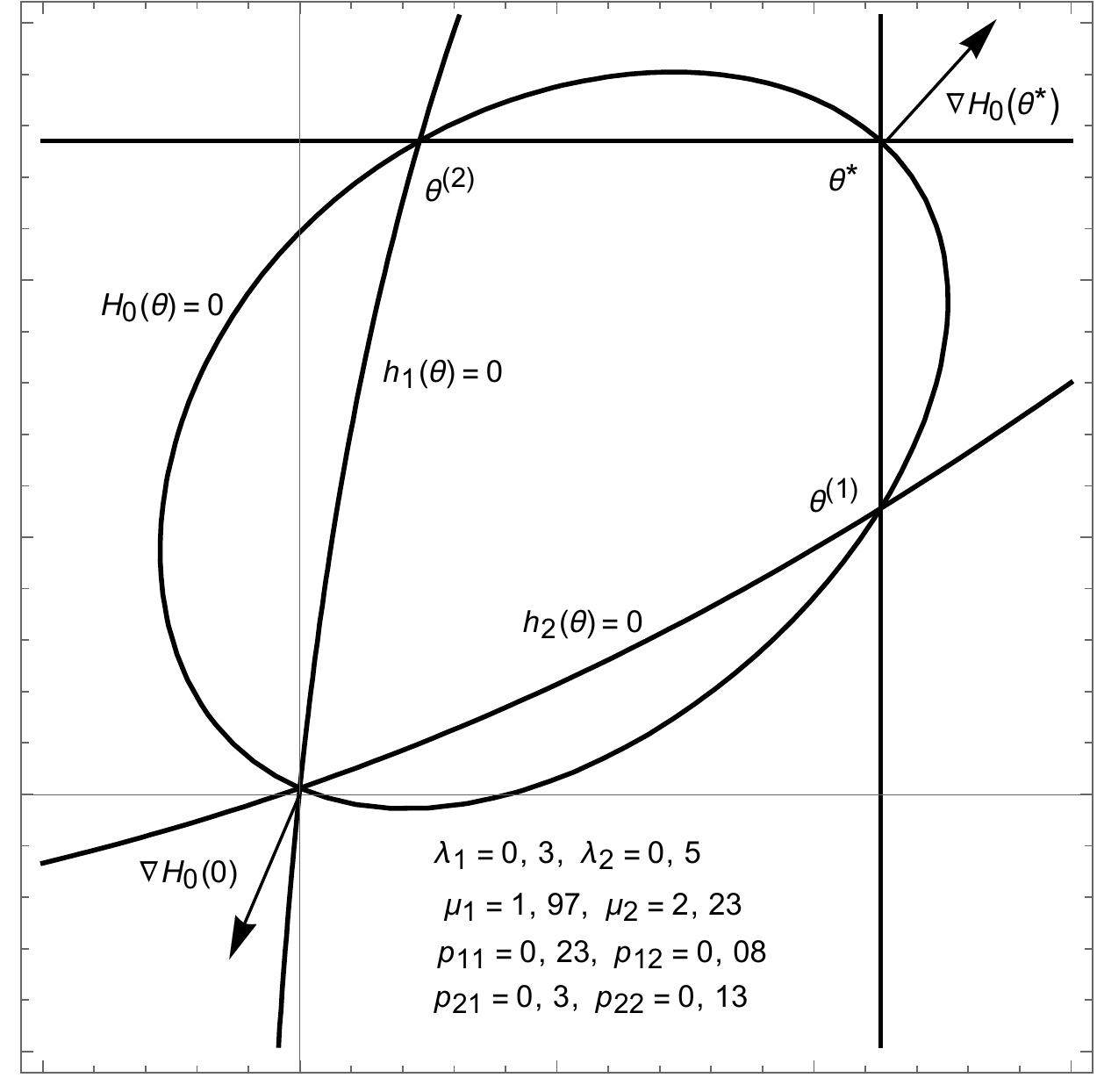}
  \caption{Движение внутри квадранта направо и вверх}
  \label{fig:1}
\end{figure}
Предположим, что 
$\partial_2H_0(\theta^{(1)})\le0$ и
$\partial_1H_0(\theta^{(2)})\le0$\,, как на рис. 4. 
Тогда
$\partial_1H_0(\theta^{(1)})>0$ и
$\partial_2H_0(\theta^{(2)})>0$\,.  В этом случае оптимальным
 горизонтальным движением является движение с импульсом
$\theta^{(1)}$
и оптимальным  вертикальным движением 
является движение с импульсом $\theta^{(2)}$\,.
Затраты на движение
 сначала по вектору $s=(s_1,0)$ горизонтально
 и затем по вектору $p$  равны $\theta^{(1)}_1s_1+
\theta(p)\cdot p$\,.
Поэтому, как и на рис. 3, если $\tilde\theta_1\le \theta^{(1)}_1$\,, что
имеет место, если 
наклон вектора $r$ больше, чем
наклон нормали к кривой $H_0(\theta)=0$ в точке $\theta^\ast $\,, и,
соответственно, 
$\tilde\theta$ находится слева от $\theta^\ast $\,, то
движение по $r$ предпочтительней.
 Предположим, что наклон вектора $r$ меньше, чем
наклон нормали к кривой $H_0(\theta)=0$ в точке $\theta^\ast $\,, т.е.,
$\tilde\theta$ находится справа от $\theta^\ast $\,. Найдём
оптимальное горизонтальное смещение $s$\,.
  Нужно минимизировать 
$\theta^{(1)}_1s_1+
\theta(p)\cdot p$ по $\theta(p)$ при
условии, что $r=p+s$  и $p$ пропорционален $\nabla H_0(\theta(p))$\,.
 Обозначая $\theta(p)$ через $\theta$ 
и обозначая $p$ через $p(\theta)$\,,
 имеем, что
$\theta^{(1)}_1s_1+\theta\cdot p(\theta)=\theta^{(1)}_1(r_1-p_1(\theta))+
\theta \cdot p(\theta)
=\theta^{(1)}_1r_1-\theta^{(1)}_1p_1(\theta)+
\theta\cdot p(\theta)$\,, где 
$\kappa \nabla H_0(\theta)=p(\theta)$ для некоторого $\kappa>0$\,. Так как $p_2(\theta)=r_2$\,, то
$\kappa \partial_2 H_0(\theta)=r_2$\,.
Минимизируем $-\theta^{(1)}_1p_1(\theta)+
\theta\cdot p(\theta)$ при ограничениях $H_0(\theta)=0$\,,
$\kappa \nabla H_0(\theta)=p(\theta)$\,, 
$\kappa \partial_2 H_0(\theta)=r_2$\,.
Имеем, что $-\theta^{(1)}_1p_1(\theta)+
\theta \cdot p(\theta)=\bl(
 \partial_1 H_0(\theta)/\partial_2  H_0(\theta)
\,(\theta_1-\theta^{(1)}_1)+\theta_2\br)r_2$\,. 
 Покажем, что
$ \partial_1 H_0(\theta)/\partial_2
  H_0(\theta)\,(\theta_1-\theta^{(1)}_1)+\theta_2$
убывает на дуге $[\tilde\theta,\theta^\ast ]$\,.
 Предположим, что
$\theta$ находится между $\tilde\theta$ и $\theta^\ast $\,.
\begin{figure}[h!]
  \centering
  \includegraphics[scale=.8]{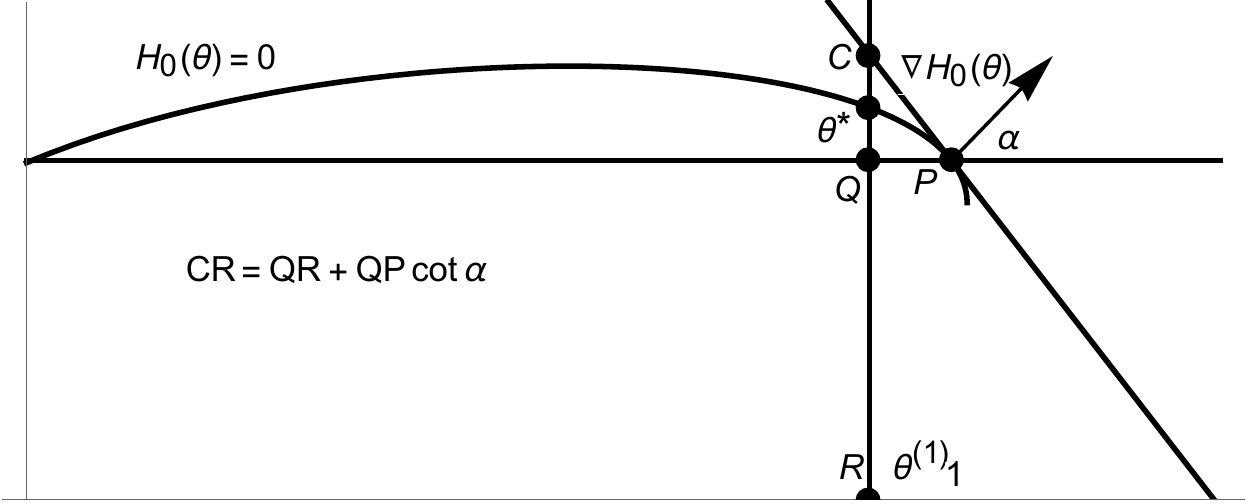}
  \caption{Оптимальность движения по нормали в $
\theta^\ast$}
      \label{fig:5}
\end{figure}
Рис. 5 иллюстрирует приводимые ниже рассуждения. Точка $P$
сoответствует переменному импульсу $\theta$\,.
Обозначим через $\alpha$ угол между нормалью к $ H_0(\theta)=0$ в
точке $P$ и
горизонталью
 через точку $\theta$\,. Имеем, что $\partial_1 H_0(\theta)/\partial_2
  H_0(\theta)=\cot \alpha$\,.  По свойству углов с
взаимно перпендикулярными сторонами, угол $QCP$ между касательной к
$ H_0(\theta)=0$ в  точке $\theta$ и вертикальной прямой с абсциссой
$\theta^{(1)}_1$ также равен $\alpha$. Поэтому отрезок $CQ$\,,
соединяющий точку $(\theta_1^{(1)},\theta_2)$ и пересечение касательной
с вертикальной прямой с абсциссой $\theta^{(1)}_1$\,, равен
$(\theta_1-\theta^{(1)}_1) \cot\alpha$\,. Поэтому 
$ \partial_1 H_0(\theta)/\partial_2
H_0(\theta)\,(\theta_1-\theta^{(1)}_1)
+\theta_2$ -- это длина вертикального
   отрезка $CR$ от точки пересечения касательной
с вертикальной прямой с абсциссой $\theta^{(1)}_1$ до оси
абсцисс. По мере того, как точка $\theta$ движется по кривой 
$ H_0(\theta)=0$ от точки $\tilde\theta$ против часовой стрелки, 
длина этого
отрезка уменьшается и достигает минимума, когда $\theta$ попадает в
точку $\theta^\ast $\,. Если продолжать движение 
за $\theta^\ast $ против часовой
стрелке, то отрезок опять начнет увеличиваться.
Таким образом, если начать двигаться горизонтально, то
 оптимальным является движение с импульсом $\theta^{(1)}$ до тех пор, пока
наклон прямой, соединяющей движущуюся точку и точку $r$ не станет
равным наклону внешней нормали в точке $\theta^\ast $\,. После этого
нужно двигаться прямолинейно с  наклоном
 этой внешней нормали. Поскольку движение по $r$ является частным
 случаем начального движения горизонтально с $s_1=0$\,,
в этом случае оптимальное
 движение горизонтально  лучше, чем  движение по $r$\,.  
Наконец, если сначала
двигаться вертикально и затем по направлению к точке $r$\,, то,
поскольку $\tilde\theta_2<  \theta^\ast_2$\,,  ситуация будет
аналогична рассмотренному выше случаю, когда
$\tilde\theta_1<\theta^\ast _1$\,, т.е., сразу двигаться по прямой
выгоднее. 
Таким образом, 
оптимальным является движение по  оси абсцисс  до того момента,
пока прямая, соединяющая движущуюся точку  и точку $r$\,,
 не будет иметь
тот же наклон, что и нормаль к $ H_0(\theta)=0$ в точке
$\theta^\ast $\,. 
После этого оптимальным является движение по направлению этой нормали.
При этом 
$s_1=r_1-p_1=r_1-\partial_1 H_0(\theta^\ast )/\partial_2 H_0(\theta^\ast )\,r_2
=r_2(r_1/r_2-\partial_1 H_0(\theta^\ast )/\partial_2 H_0(\theta^\ast ))$\,.
Затраты равны $\theta^{(1)}_1s_1+\theta^\ast \cdot(r-s)=\theta^\ast \cdot
r$\,.
Аналогично, если наклон вектора $r$ больше, чем наклон нормали в точке $\theta^\ast $\,, 
то нужно сначала двигаться вертикально, пока наклон не
сравняется с наклоном нормали, и затем идти по нормали в $\theta^\ast $\,.
Затраты также равны $\theta^\ast \cdot r$\,.
Эти варианты иллюстрируются рис. 6.
\begin{figure}[h!]
  \centering
  \includegraphics[scale=.5]{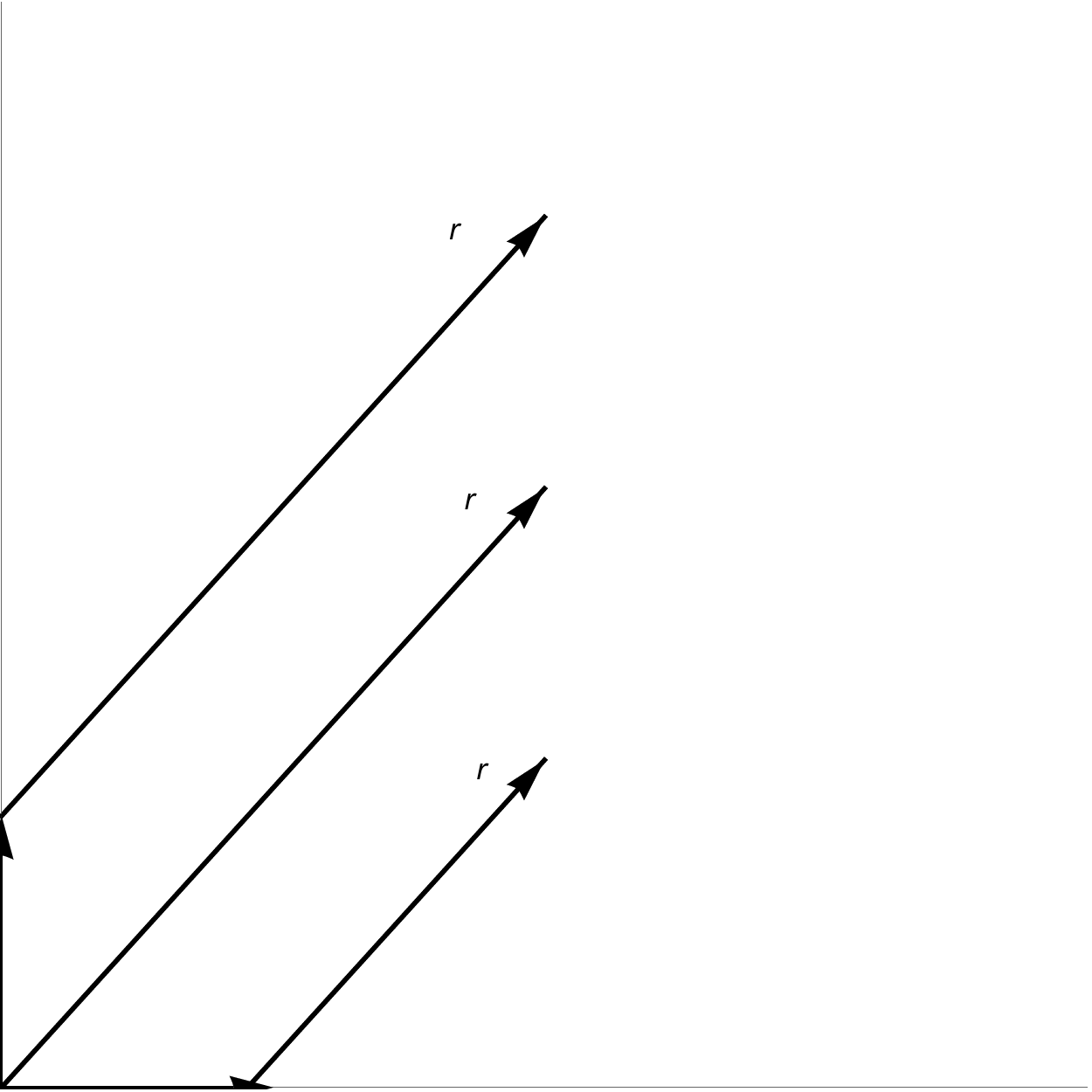}
  \caption{Оптимальные траектории для параметров рис. 4 в зависимости от конечной точки}
  \label{fig:10}
\end{figure}

\begin{figure}[!h]
  \centering
  \includegraphics[scale=.7]{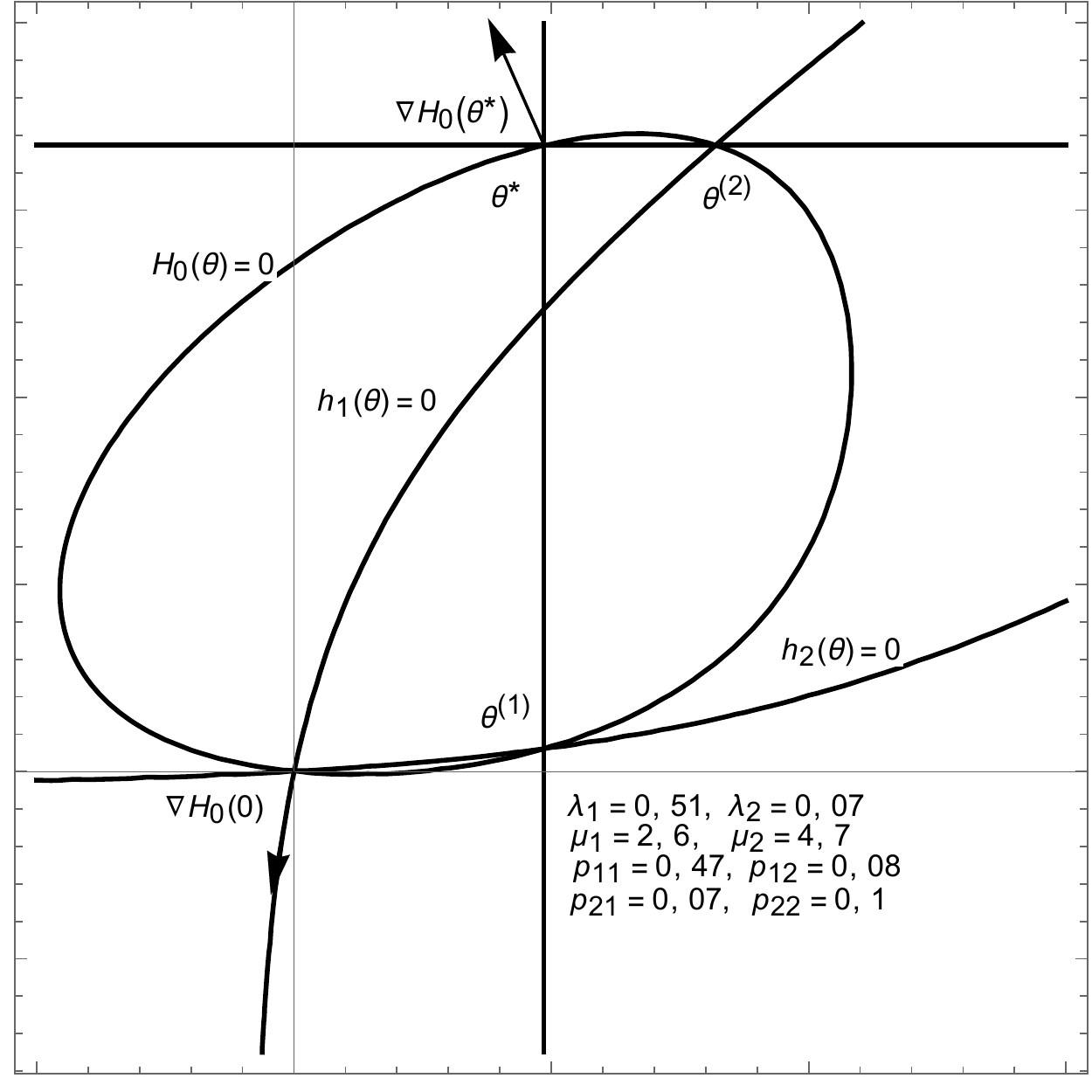}
  \caption{Движение внутри квадранта налево и вверх}
  \label{fig:2}
\end{figure}
\begin{figure}[!h]
  \centering
  \includegraphics[scale=.7]{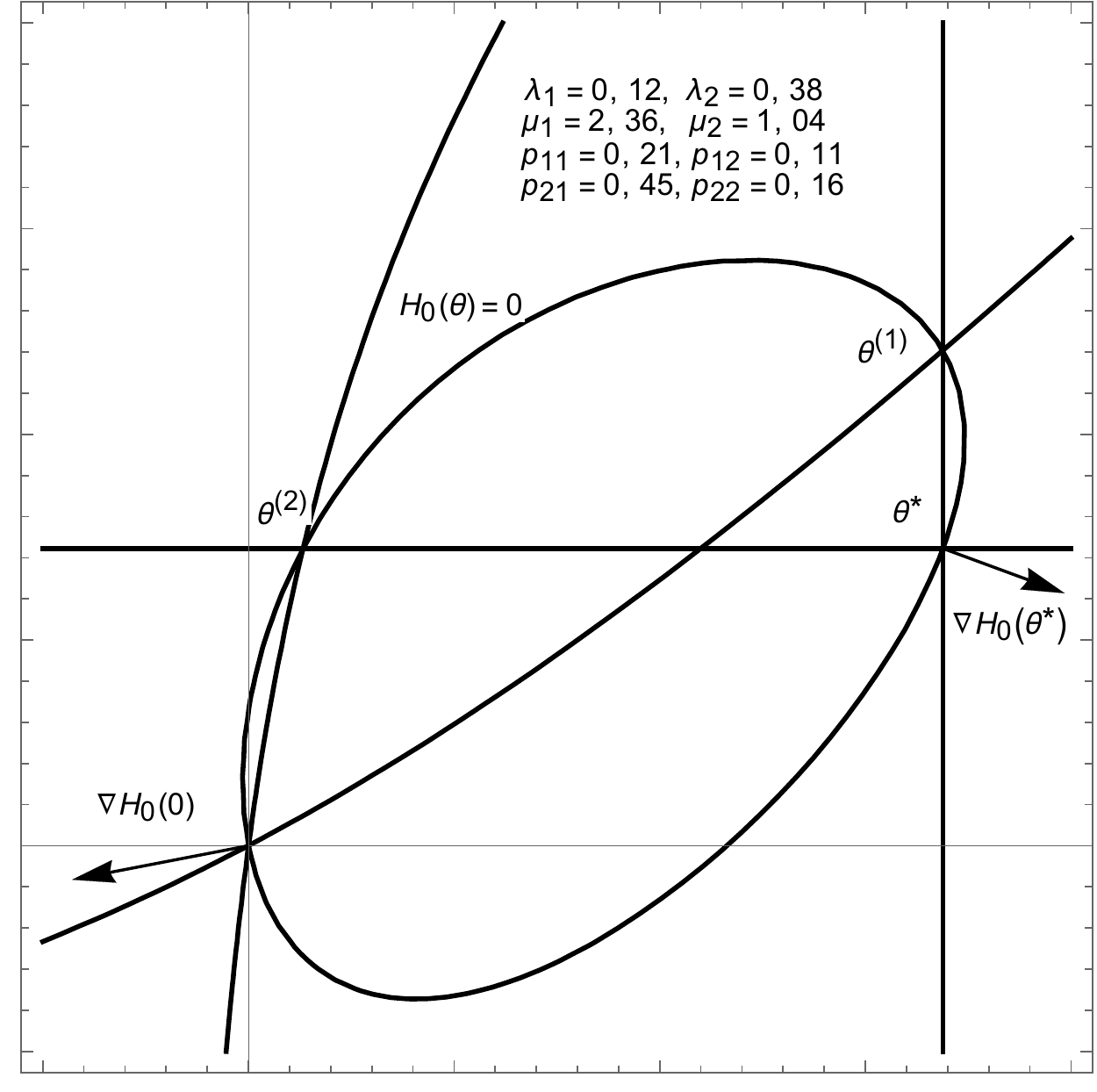}
  \caption{Движение внутри квадранта направо и вниз}
  \label{fig:3}
\end{figure}
\begin{figure}[!h]
  \centering
  \includegraphics[scale=.5]{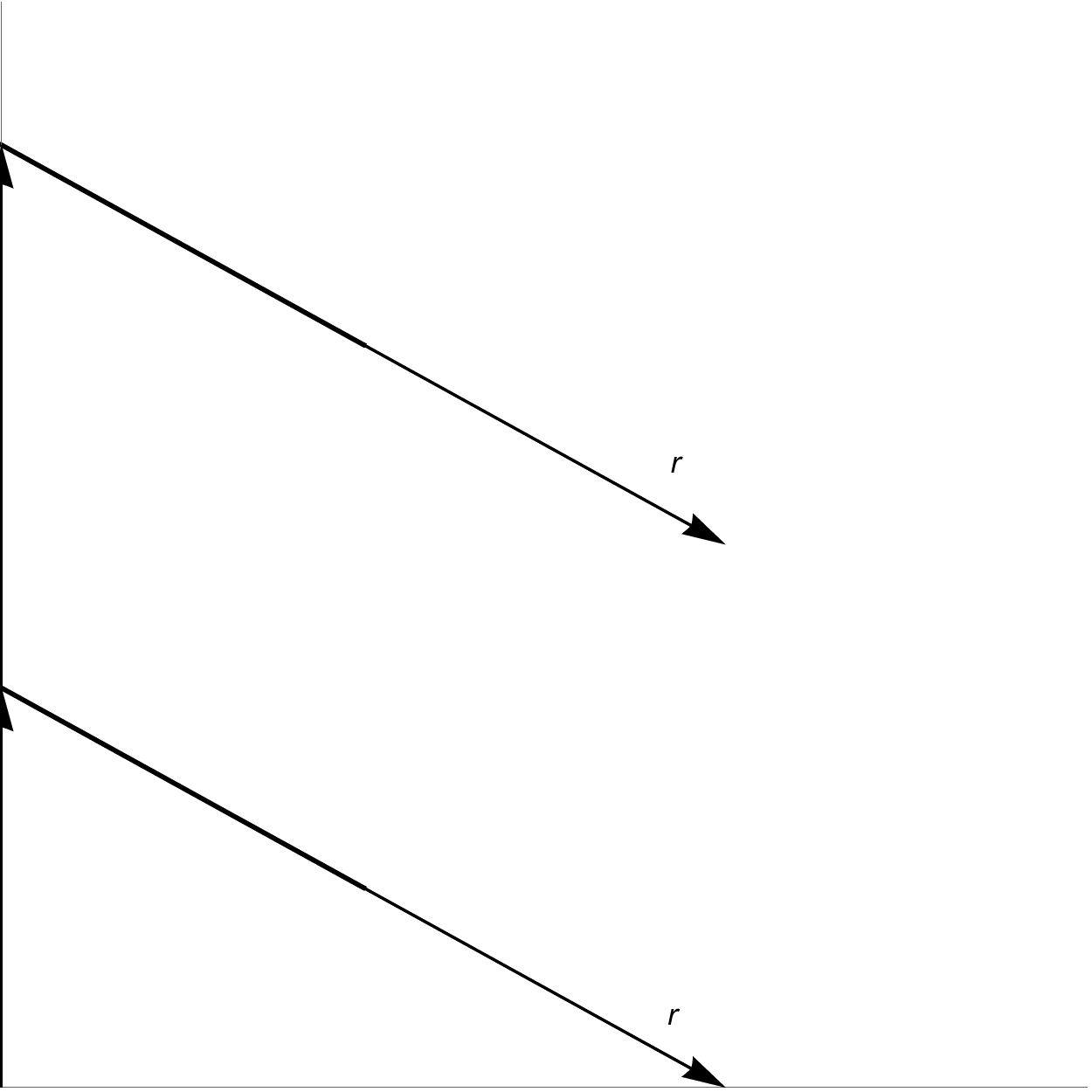}
  \caption{Оптимальные траектории при параметрах рис. 8}
  \label{fig:11}
\end{figure}

\begin{figure}[!h]
  \centering
  \includegraphics[scale=.7]{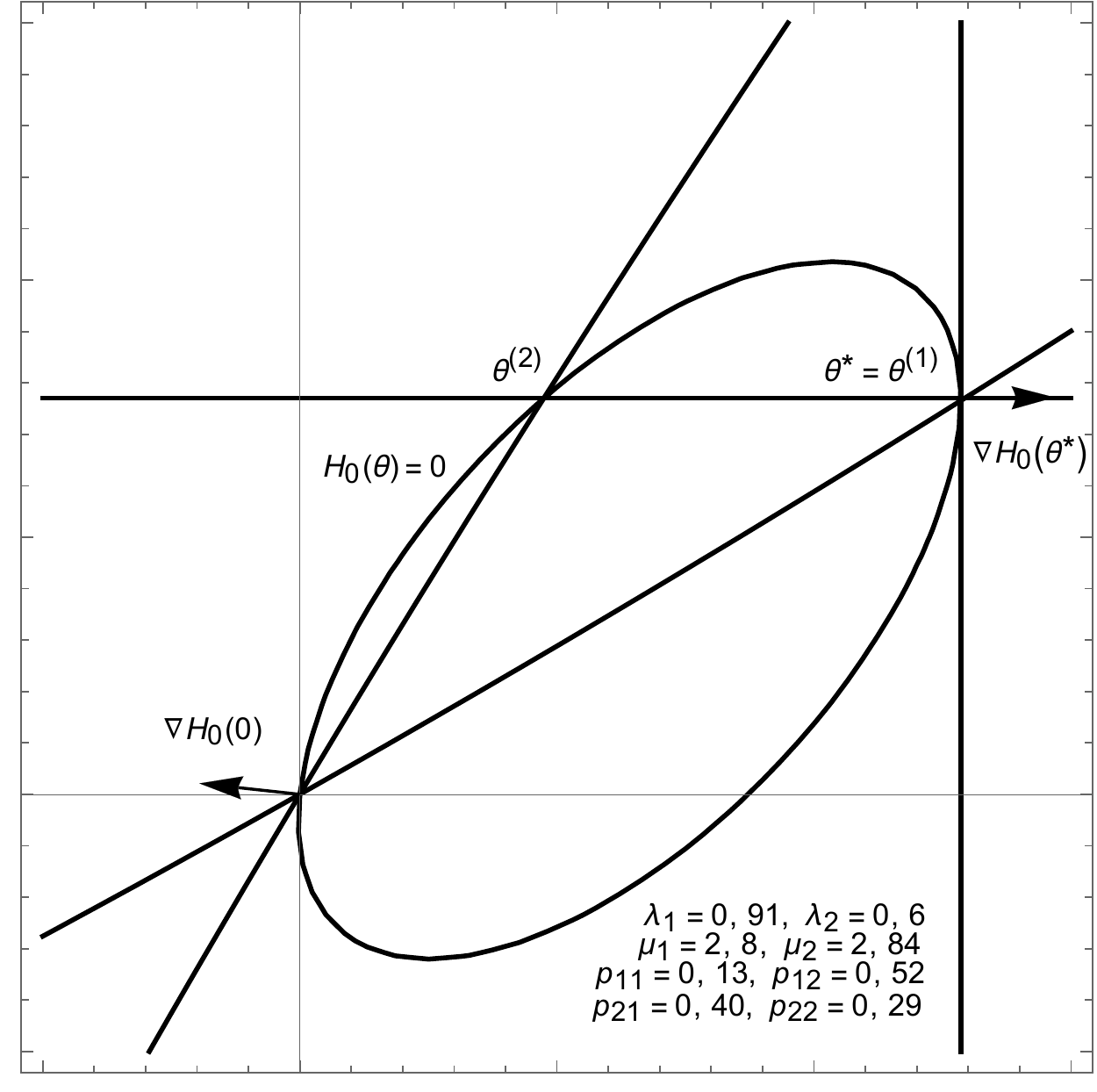}
  \caption{Горизонтальное движение внутри квадранта}
  \label{fig:4}
\end{figure}

\begin{figure}[!h]
  \centering
  \includegraphics[scale=.7]{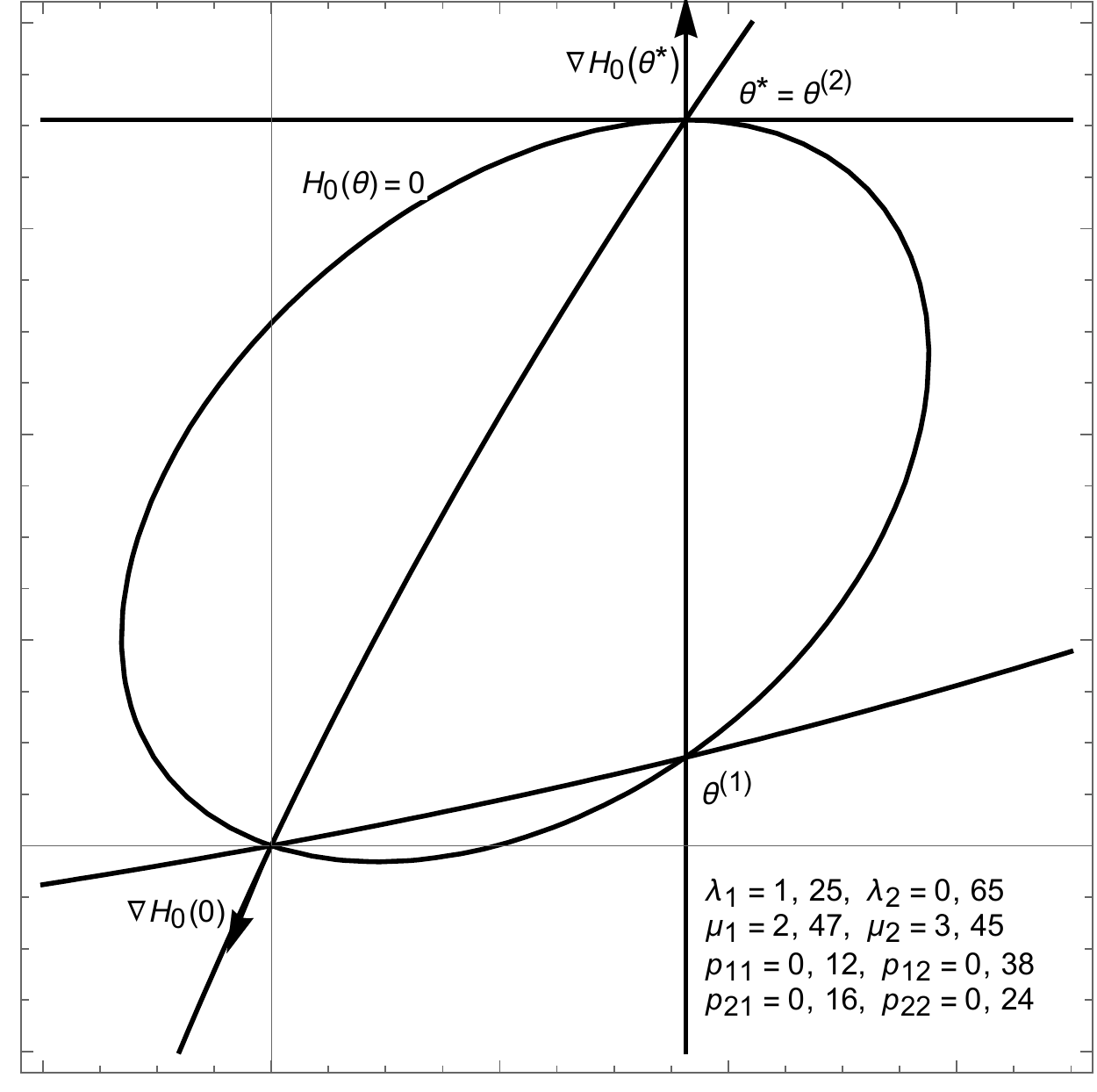}
  \caption{Вертикальное движение внутри квадранта}
  \label{fig:5}
\end{figure}

Если  
$\partial_2H_0(\theta^{(1)})\le0$
и $\partial_1H_0(\theta^{(2)})>0$\,, то точка $\theta^\ast $ находится
левее точки $\theta^{(2)}$\,, см. рис 7.
 Так как оптимальное вертикальное движение будет
происходить в соответствии с апексом кривой $H_0(\theta)=0$\,, то оно будет
уступать движению прямо по направлению $r$\,. 
Геометрические рассуждения аналогичные использованным выше
показывают, что в этом случае нужно двигаться
горизонтально, пока наклон не сравняется с наклоном внешней нормали в
точке $\theta^\ast $ и после этого ''возвращаться'' по этой нормали.
Если  
$\partial_2H_0(\theta^{(1)})>0$ и
$\partial_1H_0(\theta^{(2)})\le 0$\,,  см. рис 8, то нужно двигаться вертикально и
потом ''возвращаться''. Рис. 9 иллюстрирует оптимальные траектории для
параметров рис. 8. Ось абсцисс при этом
является несущественной, так что оптимальная траектория, ведущая
 в точку на этой оси,
проходит по оси ординат.
Во всех случаях надо двигаться по одной из осей координат так, чтобы
вектор, соединяющий движущуюся точку с точкой назначения, стал
колинеарен нормали в точке $\theta^\ast $\,.  После этого нужно
двигаться по прямой к точке назначения. 
Если система эргодична, то одна из этих комбинаций знаков 
$\partial_2 H_0(\theta^{(1)})$ и $\partial_1
H_0(\theta^{(2)})$ обязательно имеет место.
Затраты всегда равны $\theta^\ast \cdot r$\,.
Как следствие, если $\partial_2 H_0(\theta^{(1)})>0$
(соответственно, $\partial_1 H_0(\theta^{(2)})>0$), то первоначальное
движение по горизонтали (соответственно, по вертикали) не является
частью оптимального пути, т.е., ось абсцисс (соответственно, ось ординат)
является несущественной.  Кроме того, оптимальный маршрут можно получить,
двигаясь по антиградиенту $H_0$ в точке $\theta^\ast $
 до пересечения с одной из
осей координат и затем двигаясь по этой оси в начало координат. Эта
ось обязательно является существенной.
Приводимые  рис. 10 и рис. 11 иллюстрируют  варианты 
   оптимального движения внутри первого квадранта параллельно одной из осей координат. (Хотя
   вектор $\nabla H_0(0)$ на рис. 10 не принадлежит третьему
   квадранту, выкладки показывают, что сеть, тем не менее, эргодична.)

{\bf Благодарность.}
Автор выражает признательность С.А. Пирогову и А.Н. Рыбко за полезные
обсуждения и рекомендации по улучшению изложения.
\appendix
\section{Приложение}
\label{sec:app}

\subsection{Доказательство леммы \ref{le:lagrange}}
\label{sec:--refle:lagrange}

  Eсли минимум 
$\int_0^TL(x(t),\dot x(t))\,dt$ по $T\ge0$ и по абсолютно непрерывным
 функциям $x(t)$ таким, что $x(0)=x_0$\,, $(T,x(T))\in S'$ и
$g(x(t))\le0$ для всех $t\le T$\,,
где  $S'$ -- выпуклое замкнутое подмножество $\R\times\R^k$\,,
   достигается для времени $T^\ast $ и функции $
x^\ast(t)$\,, то  существуют 
 мера $\mu$ на $[0,T^\ast ]$\,, $\mu$--измеримая функция
$\gamma(t)$\,,   абсолютно непрерывная функция $p(t)$
 такие, что 
\begin{enumerate}
\item $\gamma(t)\in \partial^>g(x^\ast(t))$ для $\mu$--почти всех
  $t\in[0,T^\ast]$ и носитель меры $\mu$ содержится в множестве
  $\{t\in[0,T^\ast]:\,\partial^>g( x^\ast(t))\not=\emptyset\}$\,,
  \item \begin{equation*}
\left[  \begin{array}[c]{c}
    -\dot p(t)\\\dot  x^\ast(t)
  \end{array}\right]\in \partial H\bl(x^\ast(t),p(t)+\int_0^t
\gamma(s)\, \mu(ds)\br)
\quad \text{п.в. на } [0,T^\ast]\,,
\end{equation*}
\item 
существует постоянная $h$ такая, что
  \begin{equation*}
    H(x^\ast(t),p(t)+\int_0^t\gamma(s)\, \mu(ds))=h\quad\text{  на
    } [0,T^\ast]\,,
  \end{equation*}
\item
  \begin{equation*}
    \left[
      \begin{array}[c]{c}
-h\\p( T^\ast  )+\displaystyle\int_0^{ T^\ast  } \gamma(t)\,\mu(dt)        
      \end{array}
\right]
 \in -N_{S'}(T^\ast, x^\ast(T^\ast))\,.
  \end{equation*}
\end{enumerate}
Несколько более подробно,  если множество $S'$ имеет вид $\{T'\}\times
S''$ для некоторых $T'>0$ и
выпуклого замкнутого множества $S''\subset \R^k$\,, то утверждения 1 и 2
   вытекают из теоремы 10.2.1 и
обсуждения на сс.362 -- 364 в Vinter \cite{vin00},  а также из ''гамильтоновой
дуализации'', см., теорему 7.6.5 на с.266 в Vinter \cite{vin00}. Для  общего
выбора $S'$ можно применить прием сведения задачи с нефиксированным
временем к задаче с фиксированным временем, как 
в доказательстве следствия 3.6.1 на с.142 в Кларк \cite{Cla83-r}, см., также теорему 8.2.1 на
с.290 в
Vinter \cite{vin00}.
 Можно также применить следствие
3.6.1 на с.142 в Кларк \cite{Cla83-r}, где функция $f$ -- это значение
дополнительной переменной, так что $\zeta$ -- это 
$(k+2)$--вектор, у которого последний элемент равен единице, а все
остальные -- нулю, откуда следует, что число $\lambda$  в условии 4)
на с.142 в Кларк \cite{Cla83-r} не влияет на  значения
первых $k+1$ компонент в левой части. Используется также утверждение
предложения 2.4.4 на с.55 в Кларк \cite{Cla83-r}, в соответствии с
которым $\partial d_{S'}(T^\ast,x^\ast(T^\ast))$ может быть заменено на
$N_{S'}(T^\ast,x^\ast(T^\ast))$\,.  В условиях леммы \ref{le:lagrange}
$S'=\R_+\times S$\,. Поэтому первая компонента всех векторов из
$N_{S'}(T^\ast, x^\ast(T^\ast))$ равна нулю, например, по теореме
 2.5.6 на с.67  в Кларк \cite{Cla83-r}.  Как следствие условия
 трансверсальности   4., $h=0$\,. То, что функция 
 $H$ из
 \eqref{eq:31} может использоваться  как функция $H$ следствия 3.6.1
 на с.142 в Кларк \cite{Cla83-r},
  устанавливается рассуждениями аналогичными использованным при  доказательстве теоремы 4.2.2 на с.156 в
 Кларк \cite{Cla83-r}, где условие строгой липшицевости $H$ заменено
 на условие локальной липшицевости $L$\,.
Результат аналогичный следствию 3.6.1
 на с. 142 в Кларк \cite{Cla83-r} содержится
 также в теореме 10.5.1 на с.383 в Vinter \cite{vin00}.
\subsection{Доказательство \eqref{eq:35}}
\label{sec:-hamil}

Заметим, что
$  \pi(u)=\sup_{\theta\in\R}( u\theta-(e^\theta-1))\,$, где $u\ge0$\,.
Как следствие, применяя теорему о минимаксе к \eqref{eq:35a},
\begin{multline*}
L_J(y)=  \inf_{\substack{(a,d,\varrho)\in\R_{+}^K
\times\R_+^K\times\mathbb{S}_+^{K\times K}
:\\y=a+(\varrho^T-I)d}}\psi_J(a,d,\varrho)
= \inf_{\substack{(a,d,\varrho)\in\R_{+}^K
\times\R_+^K\times\mathbb{S}_+^{K\times K}
:\\y=a+(\varrho^T-I)d}}\Bl(\sum_{k=1}^K
\sup_\theta\bl(\theta\,a_k-(e^\theta-1)\lambda_k\br)\,+
\\+\sum_{k\in J^c}
\sup_\theta\bl(\theta\,d_k-(e^\theta-1)\mu_k\br)\,
+\sum_{k\in
  J}\sup_\theta\bl(\theta\,d_k-(e^\theta-1) \mu_k\br)\,
\,
\ind_{(\mu_k,\infty)}(d_k)\Br)+
\\+\sum_{k=1}^K d_k \biggl[\sum_{l=1}^K 
\sup_\theta\bl(\theta\,\varrho_{kl}-(e^\theta-1)p_{kl}\br)\,
+
\sup_\theta\bl(\theta\,\bl(1-\sum_{l=1}^K
  \varrho_{kl}\br)-(e^\theta-1)
\bl(1-\sum_{l=1}^K
p_{kl}\br)\br)\,\biggr]\Br)=\\=
 \inf_{\substack{(a,d,\varrho)\in\R_{+}^K
\times\R_+^K\times\mathbb{S}_+^{K\times K}
:\\y=a+(\varrho^T-I)d}}\sup_{\substack{\theta_k,\,
\vartheta_k,\,\sigma_{kl},\,
\tau_k,\,\\k,l\in \{1,2,\ldots,K\}}}
\Bl(\sum_{k=1}^K
\bl(\theta_k\,a_k-(e^{\theta_k}-1)\lambda_k\br)\,
+\sum_{k\in J^c}
\bl(\vartheta_k\,d_k-(e^{\vartheta_k}-1)\mu_k\br)\,+
\\
+\sum_{k\in
  J}\bl(\vartheta_k\,d_k-(e^{\vartheta_k}-1) \mu_k\br)\,
\,
\ind_{(\mu_k,\infty)}(d_k)
+\sum_{k=1}^K d_k \biggl[\sum_{l=1}^K 
\bl(\sigma_{kl}\,\varrho_{kl}-(e^{\sigma_{kl}}-1)p_{kl}\br)\,+
\\+
\tau_k\,\bl(1-\sum_{l=1}^K
  \varrho_{kl}\br)-(e^{\tau_k}-1)
\bl(1-\sum_{l=1}^K
p_{kl}\br)\,\biggr]\Br)=\\=
\sup_{\substack{\theta_k,\,
\vartheta_k,\,
\sigma_{kl},\,
\tau_k,\,\\k,l\in\{1,2,\ldots,K\}}}
  \inf_{\substack{(a,d,\varrho)\in\R_{+}^K
\times\R_+^K\times\mathbb{S}_+^{K\times K}
:\\y=a+(\varrho^T-I)d}}
\Bl(\sum_{k=1}^K
\theta_k\,a_k
+\sum_{k\in J^c}
\vartheta_k\,d_k\,+
\\
+\sum_{k\in
  J}\vartheta_k\,d_k\,
\ind_{(\mu_k,\infty)}(d_k)
+\sum_{k=1}^K d_k \sum_{l=1}^K 
\sigma_{kl}\,\varrho_{kl}\,+
\\+\sum_{k=1}^K d_k
\tau_k\,\bl(1-\sum_{l=1}^K
  \varrho_{kl}\br)
-\sum_{k=1}^K(e^{\theta_k}-1)\lambda_k
-\sum_{k\in J^c}(e^{\vartheta_k}-1)\mu_k\,
-\sum_{k\in
  J}(e^{\vartheta_k}-1) \mu_k
\,
\ind\bl(d_k>\mu_k\br)-\\
-\sum_{k=1}^K d_k \sum_{l=1}^K (e^{\sigma_{kl}}-1)p_{kl}
-\sum_{k=1}^K d_k(e^{\tau_k}-1)
\bl(1-\sum_{l=1}^K
p_{kl}\br)
\Br)=\\=\sup_{\substack{\theta_k,\,
\vartheta_k,\,\sigma_{kl},\,
\tau_k,\,\\k,l\in \{1,2,\ldots,K\}}}
 \inf_{\substack{(d,\varrho)\in\R_+^K\times\mathbb{S}_+^{K\times K}
:\\y\ge(\varrho^T-I)d}}
\Bl(\sum_{k=1}^K
\theta_k\,y_k-
\sum_{k=1}^K\sum_{l=1}^K
\theta_l \varrho_{kl}d_k+\sum_{k=1}^K
\theta_kd_k
+\sum_{k\in J^c}
\vartheta_k\,d_k\,+
\\
+\sum_{k\in
  J}\vartheta_k\,d_k\,
\ind_{(\mu_k,\infty)}(d_k)
+\sum_{k=1}^K d_k \sum_{l=1}^K 
\sigma_{kl}\,\varrho_{kl}
+\sum_{k=1}^K d_k
\tau_k\,\bl(1-\sum_{l=1}^K
  \varrho_{kl}\br)-\\
-\sum_{k=1}^K(e^{\theta_k}-1)\lambda_k
-\sum_{k\in J^c}(e^{\vartheta_k}-1)\mu_k\,
-\sum_{k\in
  J}(e^{\vartheta_k}-1) \mu_k
\,
\ind_{(\mu_k,\infty)}(d_k)-\\
-\sum_{k=1}^K d_k \sum_{l=1}^K (e^{\sigma_{kl}}-1)p_{kl}
-\sum_{k=1}^K d_k(e^{\tau_k}-1)
\bl(1-\sum_{l=1}^K
p_{kl}\br)
\Br)\,.
\end{multline*}
Найдём инфимум по $\varrho$\,. 
Нужно минимизировать
$\sum_{k=1 }^K\sum_{l=1}^K\bl(-
\theta_l +
\sigma_{kl}-\tau_k
\br)\,d_k\varrho_{kl}$
при условии, что
 $\sum_{k=1}^K \varrho_{kl}d_k\le y_l+d_l$\,, $\varrho_{kl}\ge0$\,,
и $\sum_{l=1}^K\varrho_{kl}\le1$\,.
В соответствии с методом множителей Лагранжа, см., напр., теорему
6.2.4 на с.196 в Базара и Шетти \cite{BazShe82}, существует
неотрицательный вектор
$\alpha=(\alpha_1,\ldots,\alpha_K)$ такой, что достигается минимум
выражения 
\begin{equation*}
\sum_{k=1 }^K\sum_{l=1}^K\bl(-
\theta_l +
\sigma_{kl}-\tau_k
+ \alpha_l 
\br)\,d_k\varrho_{kl}
-\sum_{l=1}^K\alpha_l(y_l+d_l)
\end{equation*}
при условии, что
 $\varrho_{kl}\ge0$ и $\sum_{l=1}^K\varrho_{kl}\le1$\,. 
Если $\min_{l}\bl(-
\theta_l +
\sigma_{kl}-\tau_k
+ \alpha_l 
\br)\le 0$\,, то нужно взять $\varrho_{km}=1$\,, где 
$(-
\theta_m +
\sigma_{km}-\tau_k
+ \alpha_m)
=\min_l\bl(-
\theta_l +
\sigma_{kl}-\tau_k
+ \alpha_l 
\br)$ и $\varrho_{kl}=0$ при $l\not=m$\,.
Если $\min_{l}\bl(-
\theta_l +
\sigma_{kl}-\tau_k
+ \alpha_l 
\br)> 0$\,, то $\varrho_{kl}=0$\,.
Получаем, что минимум равен
$  \sum_{k }d_k\min_l\bl(-
\theta_l +
\sigma_{kl}-\tau_k
+ \alpha_l 
\br)\wedge 0-\sum_k\alpha_k(y_k+d_k)$\,,
где использовано обозначение $u\wedge0=\min(u,0)$\,.
Максимум по $\alpha$ достигается при $\alpha=0$\,. Это и есть нужное $\alpha$\,. 

Таким образом, обозначая
\begin{equation}\label{eq:2}
V_k=
\sum_{l=1}^K (e^{\sigma_{kl}}-1)p_{kl}
+(e^{\tau_k}-1)
\bl(1-\sum_{l=1}^K
p_{kl}\br)+\max_l\bl(
\theta_l -
\sigma_{kl}
\br)\vee (-{\tau_k})\,,
\end{equation}
где используется обозначение $u\vee v=\max(u,v)$\,,
получаем, что
\begin{multline}
  \label{eq:5}
L_J(y)
=\sup_{\substack{\theta_k,\,
\vartheta_k,\,\sigma_{kl},\,
\tau_k,\,\\k,l\in \{1,2,\ldots,K\}}}
 \inf_{d\in\R_+^K}
\Bl(\sum_{k=1}^K
\theta_k\,y_k-\sum_{k=1}^KV_kd_k
+\sum_{k\in
  J}\bl(\vartheta_k\ind_{(\mu_k,\infty)}(d_k)+\theta_k
\br)\,d_k\,+
\\+\sum_{k\in J^c}\bl(\theta_k+
\vartheta_k\br)\,d_k
-\sum_{k=1}^K(e^{\theta_k}-1)\lambda_k
-\sum_{k\in J^c}(e^{\vartheta_k}-1)\mu_k\,
-\sum_{k\in
  J}(e^{\vartheta_k}-1) \mu_k
\,
\ind_{(\mu_k,\infty)}(d_k)\Br)=\\
=\sup_{\substack{\theta_k,\,
\vartheta_k,\,\sigma_{kl},\,
\tau_k\,:\\\theta_k+\vartheta_k\ge V_k}}
 \inf_{d\in\R_+^K}
\Bl(\sum_{k=1}^K
\theta_k\,y_k-\sum_{k=1}^KV_kd_k
+\sum_{k\in
  J}\bl(\vartheta_k\ind_{(\mu_k,\infty)}(d_k)+\theta_k
\br)\,d_k\,+
\\+\sum_{k\in J^c}\bl(\theta_k+
\vartheta_k\br)\,d_k
-\sum_{k=1}^K(e^{\theta_k}-1)\lambda_k
-\sum_{k\in J^c}(e^{\vartheta_k}-1)\mu_k\,
-\sum_{k\in
  J}(e^{\vartheta_k}-1) \mu_k
\,
\ind_{(\mu_k,\infty)}(d_k)\Br)=\\
= \sup_{\substack{\theta_k,\,\vartheta_k,\,
\sigma_{kl},\,\tau_k:\\\theta_k+\vartheta_k\ge V_k}}
\Bl(\sum_{k=1}^K
\theta_k\,y_k+\sum_{k\in J }\mu_k
\bl(\theta_k+\vartheta_k-V_k-(e^{\vartheta_k}-1)\br)
\wedge0
-\sum_{k=1}^K(e^{\theta_k}-1)\lambda_k-\sum_{k\in J^c}(e^{\vartheta_k}-1)\mu_k\,
\Br)\,.
\end{multline}
Поясним, как получается последнее равенство. 
Если брать инфимум членов с $d_k$ по 
$d_k>\mu_k$ для $k\in J$\,, то получится 
$\mu_k\bl(-V_k+\theta_k+\vartheta_k
-(e^{\vartheta_k}-1)\br)$\,.
Если брать инфимум по $d_k\le\mu_k$\,, то получится 
$\mu_k(\theta_k-V_k)\wedge0$ \,.
Поскольку $\vartheta_k -(e^{\vartheta_k}-1)\le0$\,, то минимум
этих выражений равен
$\mu_k\bl(-V_k+\theta_k+\vartheta_k
-(e^{\vartheta_k}-1)\br)\wedge0$\,.

Проминимизируем $V_k$ по $\sigma_{kl}\,,{\tau_k}$\,. В силу \eqref{eq:2},
эта функция  выпукла по $(\sigma_{kl},\tau_k)$\,.
Если $p_{kl}=0$ для некоторого $l$\,, то можно положить
$\sigma_{kl}=\infty$\,, что позволяет ограничиться рассмотрением тех
$l$\,, для которых $p_{kl}>0$\,. Аналогично, можно исключить из
рассмотрения $\tau_k$\,,  если 
$\sum_{l=1}^K
p_{kl}=1$\,, полагая $\tau_k=\infty$\,. Поэтому минимум  правой части
\eqref{eq:2} существует. В этой точке
субдифференциал содержит нулевой вектор. Обозначим через ${\beta_k}$ значение
максимума в правой части \eqref{eq:2}.  Субдифферeнциал правой части
\eqref{eq:2}  по
$((\sigma_{kl},\,l\in\{1,2,\ldots,K\})\,,\tau_k)$ равен
$((e^{\sigma_{kl}}p_{kl}\,,
l\in\{1,2,\ldots,K\}),e^{\tau_k}(1-\sum_{l=1}^Kp_{kl}))
-\text{co}\,((\ind_U(l),\,\{l\in1,2,\ldots,K\}),\ind_{\beta_k}(-\tau_k))$\,, 
где $\text{co}$ обозначает операцию взятия выпуклой оболочки и $U$ -- множество
тех $l$ (возможно, пустое), для которых
$\theta_l-\sigma_{kl}={\beta_k}$\,.
Так как в точке минимума субдифференциал содержит нулевой вектор, то
 $U$ содержит все $l$ такие, что $p_{kl}>0$\,, и
 $\tau_k=-{\beta_k}$\,, если $\sum_{l=1}^Kp_{kl}<1$\,.
Поэтому существуют неотрицательные
$\alpha_1,\,\ldots,\alpha_K,\alpha_{K+1}$\,, сумма которых равна 1\,,
такие что
$e^{\sigma_{kl}}p_{kl}-\alpha_l=0$ для $l\in U$\,, $\alpha_l=0$ для $l\not\in U$\,,
$e^{\tau_k}(1-\sum_{l=1}^Kp_{kl})-\alpha_{K+1}=0$ при условии, что
$\tau_k=-{\beta_k}$\,, и  $\alpha_{K+1}=0$ 
в противном случае.
Таким образом,  $\alpha_l=e^{\sigma_{kl}}p_{kl}$ 
 для всех $l\in\{1,2,\ldots,K\}$ и $\alpha_{K+1}=e^{\tau_k}(1-\sum_{l=1}^Kp_{kl})$\,.
Получаем, что
$\sum_{l=1}^Ke^{\sigma_{kl}}p_{kl}+e^{\tau_k}(1-\sum_{l=1}^Kp_{kl})=1$\,. 
Так как $\theta_l-\sigma_{kl}={\beta_k}$\,, если $p_{kl}>0$\,,
и $\tau_k=-{\beta_k}$\,, если $1-\sum_{l=1}^Kp_{kl}>0$\,, то
$\sum_{l=1}^Ke^{-{\beta_k}+\theta_l}p_{kl}+e^{-{\beta_k}}(1-\sum_{l=1}^Kp_{kl})=1$\,, т.е.,
$e^{\beta_k}=\sum_{l=1}^Ke^{\theta_l}p_{kl}+1-\sum_{l=1}^Kp_{kl}$\,. Окончательно
получаем, что
${\beta_k}=\ln(\sum_{l=1}^Ke^{\theta_l}p_{kl}+1-\sum_{l=1}^Kp_{kl})$ 
и  минимум $V_k$ равен
$\sum_{l=1}^K(e^{-{\beta_k}+\theta_l}-1)p_{kl}
+(e^{-{\beta_k}}-1)(1-\sum_{l=1}^Kp_{kl})+{\beta_k}={\beta_k}$\,.

Таким образом, самое последнее выражение  в  \eqref{eq:5} равно
  \begin{multline}
\label{eq:6} \sup_{\theta_k,\,\vartheta_k:\,
\theta_k+
\vartheta_k\ge\beta_k}
\Bl(\sum_{k=1}^K
\theta_k\,y_k
+\sum_{k\in J}\mu_k\bl(\theta_k
+\vartheta_k-\beta_k-(e^{\vartheta_k}-1)\br)\wedge0 -\\
  -\sum_{k=1}^K(e^{\theta_k}-1)\lambda_k-\sum_{k\in J^c}(e^{\vartheta_k}-1)\mu_k\,
\Br)\,.
\end{multline}
Если $\theta_k-\beta_k\ge0$\,, где $k\in J$\,,
 то  выбор $\vartheta_k=0$ показывает, что  $\sup_{\vartheta_k}$
 множителя при $\mu_k$ равен нулю.
Поэтому \eqref{eq:6}
 переписывается в виде
\begin{multline*}
\sup_{\theta_k,\,\vartheta_k:\,
\theta_k+
\vartheta_k\ge\beta_k}
\Bl(\sum_{k=1}^K
\theta_k\,y_k
+\sum_{k\in J}\ind_{(-\infty,\beta_k)}(\theta_k)\mu_k\bl(\theta_k
+\vartheta_k-\beta_k-(e^{\vartheta_k}-1)\br)\wedge0 -\\\,
-\sum_{k=1}^K(e^{\theta_k}-1)\lambda_k-\sum_{k\in J^c}(e^{\vartheta_k}-1)\mu_k\,
\Br)\,.
\end{multline*}
Так как функция $u-(e^u-1)$ убывает при $u\ge0$\,, то супремум по
$\vartheta_k$ для $k\in J$ достигается на границе области
ограничений. Соотношение \eqref{eq:35} следует теперь из
определения $h_k(\theta)$ в
\eqref{eq:27}.

\subsection{Доказательство леммы \ref{le:sush_os'}}
\label{sec:-lemm}
  Так как $C=(I-P^T)^{-1}$\,, то
\begin{equation*}
  c_{ml}=\frac{1}{\text{det}\,(I-P^T)}\,(-1)^{m+l}M_{lm}\,.
\end{equation*}
где $M_{lm}$ -- это $(l,m)$--минор матрицы $I-P^T$\,. Заметим, что
$\text{det}\,(I-P^T)>0$\,. Действительно, $\text{det}\,I=1$ и 
$\text{det}\,(I-\lambda P^T)\not=0$ при $\lambda\in[0,1]$\,, поскольку
спектральный радиус $P$ меньше единицы. По непрерывности, 
$\text{det}\,(I-P^T)>0$\,. Таким образом, требуется доказать, что 
$(-1)^{m+l}M_{lm}\le M_{mm}$\,.
Предположим, что $l=m+1$\,. Нужно доказать, что
$M_{mm}+M_{m+1,m}\ge0$\,.
Ввиду мультилинейности детерминанта, $M_{mm}+M_{m+1,m}$ -- это
детерминант матрицы  $\tilde I-\tilde P$\,, где $\tilde I$ --
единичная $(K-1)\times (K-1)$--матрица, а $\tilde P$ -- $(K-1)\times
(K-1)$--матрица, которая получается из $P^T$ сложением строк $m$ и
$m+1$ с последующим вычеркиванием $m$--го столбца. 
Так как матрица $P^T$ является субстохастической по столбцам, то
матрица $\tilde P$ также является субстохастической по столбцам.
Следовательно, её спектральный радиус не превосходит единицы. Отсюда
следует, что 
 $\text{det}\,(\tilde I-\tilde P)\ge0$\,, т.е.,
$M_{m+1,m}+ M_{m,m}\ge0$\,. Пусть $l>m+1$\,. С помощью
последовательности  транспозиций соседних строк и столбцов
передвинем  в матрице $\tilde I-\tilde P$ строку
 $l$  и столбец $l$ в положение строки $m+1$ и столбца $m+1$\,, 
соответственно, сохраняя при этом
 взаимное расположение остальных строк и столбцов.
Эта матрица, по-прежнему, имеет
 вид $\tilde I-\tilde{\tilde P}$\,.
В ней $\tilde M_{m+1,m}=(-1)^{l-m-1}M_{lm}$\,, т.к. знак минора
меняется только при транспозиции столбцов, и 
$\tilde M_{mm}=(-1)^{2(l-m-1)}M_{mm}$\,. Так как $\tilde M_{mm}\ge
(-1)\tilde M_{m+1,m}$\,, то $M_{mm}\ge(-1)^{l-m-2} M_{lm}=(-1)^{l+m}
M_{lm}$\,.
Случай $l<m$ рассматривается аналогично.
\bibliographystyle{unsrt}
\def\cprime{$'$} \def\cprime{$'$} \def\cprime{$'$}
  \def\polhk#1{\setbox0=\hbox{#1}{\ooalign{\hidewidth
  \lower1.5ex\hbox{`}\hidewidth\crcr\unhbox0}}} \def\cprime{$'$}
  \def\cprime{$'$} \def\cprime{$'$} \def\cprime{$'$} \def\cprime{$'$}
  \def\cprime{$'$} \def\cprime{$'$} \def\cprime{$'$}

\end{document}